\numberwithin{equation}{section}
\numberwithin{figure}{section}
\newtheorem{theorem}{Theorem}[section]
\newtheorem{corollary}{Corollary}[section]
\newtheorem{lemma}{Lemma}[section]
\newtheorem{proposition}{Proposition}[section]
\newtheorem{definition}{Definition}[section]
\newtheorem{example}{Example}[section]
\newtheorem{remark}{Remark}[section]
\newtheorem{remarks}{Remarks}[section]
\newtheorem{conjecture}{Conjecture}[section]
\newcommand{\be}{\begin{equation}}
\newcommand{\core}{C_0^{\infty}(\Omega)}
\newcommand{\laplace}{\Delta}
\newcommand{\pl}{\laplace_p}
\newcommand{\grad}{\nabla}
\newcommand{\pd}{\partial}
\newcommand{\bo}{\pd}
\newcommand{\sm}{\setminus}
\newcommand{\diver}{\mathrm{div}\,}
\newcommand{\resc}{\mathcal{D}}
\newcommand{\fundot}{\circ}
\newcommand{\beq}{\begin{equation}}
\newcommand{\eeq}{\end{equation}}
\newcommand{\beqa}{\begin{eqnarray}}
\newcommand{\eeqa}{\end{eqnarray}}
\newcommand{\beqanl}{\begin{eqnarray*}}
\newcommand{\eeqanl}{\end{eqnarray*}}
\newcommand{\bs}{\begin{sub}}
\newcommand{\es}{\end{sub}}
\newcommand{\bsn}{\begin{subn}}
\newcommand{\esn}{\end{subn}}
\newcommand{\bea}{\begin{eqnarray}}
\newcommand{\eea}{\end{eqnarray}}
\newcommand{\bean}{\begin{eqnarray*}}
\newcommand{\eean}{\end{eqnarray*}}
\newcommand{\BA}[1]{\begin{array}{#1}}
\newcommand{\EA}{\end{array}}
\newlength{\wex}  \newlength{\hex}
                \def\gz{\zeta}
\def\squarebox#1{\hbox to #1{\hfill\vbox to #1{\vfill}}}
\newcommand{\lsim}{\precsim}
\newcommand{\gsim}{\succsim}
\begin{document}
\title{\Large Positive Liouville theorems and asymptotic behavior for $p$-Laplacian type elliptic equations with a Fuchsian potential}
\author{\large Martin Fraas \\
Department of Physics, Technion, Haifa, Israel \\ martin.fraas@gmail.com
\\[4mm]
Yehuda Pinchover \\
Department of Mathematics, Technion, Haifa, Israel \\ pincho@techunix.technion.ac.il}


\maketitle

{\em \begin{center} Dedicated with affection and deep esteem\\ to the memory of Michelle Schatzman. \end{center}}
\vskip 2mm
\abstract{
We study positive Liouville theorems and the asymptotic behavior of positive solutions of $p$-Laplacian type elliptic equations of the form
 \begin{equation*}
Q'(u):=-\pl (u) + V |u|^{p-2} u = 0 \qquad \mbox{ in } X,
\end{equation*}
where $X$ is a domain in $\mathbb{R}^d$, $d\geq 2$, and $1<p<\infty$. We assume that the potential $V$ has a Fuchsian type singularity at a point $\zeta$, where either $\zeta=\infty$ and $X$ is a truncated $C^2$-cone, or $\zeta=0$ and $\zeta$ is either an isolated point of $\pd X$ or belongs to a $C^2$-portion of $\pd X$.
\\[4mm]
\noindent  2010 {\em Mathematics Subject Classification.}
\!Primary 35B53; Secondary 35B09, 35J92, 35B40.\\[1mm] \noindent
{\em Keywords.}  Fuchsian operator, isolated singularity, Liouville theorem, $p$-Lapla\-cian, positive solutions, quasilinear elliptic
operator, removable singularity.
}

\section{Introduction}\label{sect1}

A function $u$ is called {\bf $p\,$-harmonic} in a domain $X\subset \mathbb{R}^d$ if
$$-\pl(u)=0 \qquad \mbox{ in } X. $$
Here $\pl(u):=\diver(|\nabla u|^{p-2}\nabla u)$ is the celebrated $p\,$-Laplacian.

The positive Liouville theorems for $p\,$-harmonic functions  states that a nonnegative entire $p\,$-harmonic function on $\mathbb{R}^d$ is constant (see for example \cite{Martio}). On the other hand, Riemann's removable singularity theorem for $p\,$-harmonic functions with $p\leq d$ claims \cite{KV} that if $u$ is a positive $p\,$-harmonic function in the punctured unit ball
$B_1\sm \{0\}$, then either $u$ has a removable singularity at the origin, or
  \begin{equation*}
 u(x) \;\underset{x\to 0}{\sim}\;
\left\{
  \begin{array}{ll}
   \quad |x|^\frac{p-d}{p-1} & \hbox{ if }\; p<d,\\[2mm]
    -\log |x| & \hbox{ if }\; p=d.
  \end{array}
\right.
\end{equation*}
Furthermore, Picard's principle for $p\,$-harmonic functions claims that up to a multiplicative constant there exists a unique positive $p\,$-harmonic function in the punctured unit ball $B_1\sm \{0\}$ which vanishes on $\pd B_1$ (see for example Theorem~\ref{thm:weakI}).
Finally, Poisson`s principle states that for a given $\gz\in \pd B_1 $, the cone of positive {\em harmonic} functions in the unit ball that vanish on $\pd B_1 \sm \{\gz\}$ is of one dimension.

The aim of this paper is to study positive Liouville theorems, Picard-type principles, and removable singularity theorems for more general equations. More precisely, we study the {\em uniqueness} (up to a multiplicative constant) of certain positive solutions of the quasilinear elliptic equation
\begin{equation}\label{eq:1}
Q'_V(u)=Q'(u):=-\pl (u) + V |u|^{p-2} u = 0 \qquad \mbox{ in }  X,
\end{equation}
where $1<p<\infty$, $X$ is a domain in $\mathbb{R}^d$, $d\geq 2$,
and  $V \in L^\infty_{\mathrm{loc}}(X)$ is a potential with a Fuchsian type singularity at a fixed point $\zeta$ which belongs to the (ideal) boundary of $X$. We also study the asymptotic behavior of the \textit{quotients} of two positive solutions near the singular point $\zeta$.

Unless otherwise stated, we assume throughout the paper that
\begin{equation}
\label{Q} Q(u):=\int_X\left(|\nabla u|^p+V|u|^p\right)\,\mathrm{d}x\geq 0
\qquad u\in C_0^\infty(X).
\end{equation}
In other words (see \cite{ky3}),  {\bf we assume that \eqref{eq:1} admits a positive solution}.

Without loss of generality, we suppose that the singular point $\zeta$ is either the origin (so, $\zeta=0$), or $\zeta = \infty$.
More precisely we consider the following two cases:
\begin{enumerate}
\item $X$ is a domain (which might be unbounded and nonsmooth) such that the singular
point $\zeta=0$ is either an isolated component of the boundary $\pd X$,  or $\zeta=0$ belongs to a $C^2$-portion of $\pd X$.
\item $X$ is a cone near infinity, and  $\zeta = \infty$. More precisely, the intersection of $X$ with the exterior of some ball is an open connected truncated cone with $C^2$ boundary (this boundary might be empty; in this case $X$ is an exterior domain, and $\gz= \infty$ is an isolated singular point).
  \end{enumerate}
\begin{remark}{\em 
The assumption in (1) that $\zeta$ belongs to $C^2$-portion of boundary  should
be considered as a technical condition under which the boundary Harnack
inequality is valid. We expect that our results hold true under
milder smoothness assumptions.
 }
\end{remark}
Since we allow $X$ to be unbounded and $\zeta = \infty$, it is convenient to consider
the one-point compactification $\widehat{\mathbb{R}^d}:=\mathbb{R}^d \cup \{\infty\}$ of $\mathbb{R}^d$. We denote by $\hat{X}$  the closure of $X$ in $\widehat{\mathbb{R}^d}$.
 On the other hand, by a {\bf neighborhood of infinity in $X$} we mean a set of the form $X\sm K$, where  $K\Subset X$ (we write $\Omega_1 \Subset \Omega_2$
if $\Omega_2$ is open, $\overline{\Omega_1}$ is
compact and $\overline{\Omega_1} \subset \Omega_2$).

\begin{definition}\label{def:Fuchs}{\em 
Let $\zeta\in \pd \hat{X}$, where $\zeta\in \{0,\infty\}$.  We say that (\ref{eq:1}) has a {\bf Fuchsian type singularity at $\zeta$} if there exists a relative neighborhood $ X'\subset X\subset \hat{X}$ of $\zeta$ and a positive constant $C$ such that
\begin{equation}\label{eq:FuchsCon}
 |x|^p |V(x)| \leq C \qquad \mbox{for a. e. }  x\in X'.
    \end{equation}
    }
 \end{definition}
 Fuchsian type equations form a natural class where positive Liouville theorems, Picard's principle and removable singularity theorems hold true (see \cite{Borghol,De_Cicco,Lancia,Pin1,PS,V} and the references therein). In particular, under some restrictions, Poisson's principle for a Fuchsian type $p\,$-Laplace equation of the form \eqref{eq:1} in a bounded smooth domain is proved in \cite{Ver}. For other Liouville theorems for quasilinear equations see for example \cite{Birin,Far,Lewis1,Ser},  and the references therein.

 One of the main tools of the present paper is a dilatation process which uses the simple observation that near a Fuchsian-type singularity Eq.~\eqref{eq:1} is quasi-invariant under the scaling $x\mapsto Rx$.
 We postpone the description  of the dilatation process to the next section, and we continue the introduction by stating our main results.

First we recall two types of positive solutions of minimal growth \cite{Agmon,Pin1,Pin2}.
  \begin{definition}\label{def:minimal_gr}{\em 
1. Let $K\Subset X$, and let $u$ be a positive solution of the equation $Q'(u)=0$ in  $X\sm K$. We say that $u$ is {\bf a positive solution of  minimal growth in a neighborhood of infinity in $X$}  if for any $K\Subset K'\Subset X$ with smooth boundary  and any positive supersolution $v\in C((X\sm K')\cup \pd K')$ of the equation $Q'(w)=0$ in $X \sm K'$ satisfying  $u \leq v$ on $\pd K'$, we have $u \leq v$ in $X \sm K'$.

2.  Let $\zeta\in \pd \hat{X}$, and let $u$ be a positive solution of the equation $Q'(u)=0$ in  $X$. We say that $u$ is {\bf a positive solution of  minimal growth in a neighborhood of $\bo \hat{X}\sm \{\zeta\}$} if for any relative neighborhood $K'\Subset \hat{X}$ of $\zeta$ such that  $\Gamma:= \pd K' \cap X$ is smooth, and for any positive supersolution $v\in C((X\sm K')\cup \Gamma)$ of the equation $Q'(w)=0$ in $X \sm K'$ satisfying  $u \leq v$ on $\Gamma$, we have $u \leq v$ in $X \sm K'$.
 }
    \end{definition}
We summarize some basic properties of solutions of minimal growth in a neighborhood of infinity at the end of Section~\ref{sec2} (see Remarks~\ref{rem1}).

Next, we introduce a partial order $\lsim$ on a certain set $\mathcal{G}_\gz$ of {\em germs} at $\gz$. Denote by $\mathcal{G}_\gz$ the set of all positive solutions $u$ of the equation $Q'(w)=0$ in some neighborhood $X' \subset X$ of $\zeta$ that vanish continuously
on $(\pd  X' \cap \pd  X)\sm \{\zeta\}$ ($X'$ might depend on $u$).

Let $u,v\in \mathcal{G}_\gz$. We use the following notations.
\begin{itemize}
\item
We denote $u\underset{x\to \zeta}{\sim} v\;$ if
$\;\displaystyle{\underset{x \in X}{\underset{x \to \zeta}{\lim}}\,\frac{u(x)}{v(x)}= C}$ for some  positive constant $C$.
\item By $u \underset{x \to \zeta}{\prec}v$ we mean that $\displaystyle{\underset{x \in X}{\underset{x \to \zeta}{\lim}}\,\frac{u(x)}{v(x)} =0}$.

 \item By $u \underset{x \to \zeta}{\lsim} v $ we mean that either $u \underset{x \to \zeta}{\sim} v$ or $u \underset{x \to \zeta}{\prec}v$.

\item  We denote $u\underset{x \to \zeta}{\succ} u\;$ if $\;v \underset{x \to \zeta}{\prec}u$. Similarly,  $u\underset{x \to \zeta}{\gsim} v\;$ if $\;v \underset{x \to \zeta}{\lsim} u$.

     \item The dependence on $\gz$ in the above notations will be omitted when there is no danger of confusion.
     \end{itemize}
Clearly, $u {\sim} v$ defines an equivalence relation and equivalence classes on $\mathcal{G}_\gz$.
\begin{definition}\label{def:regular} {\em 
Fix $\zeta \in \pd \hat X$.

1. Let $u\in \mathcal{G}_\gz$.
We say that $\zeta$ is a {\bf regular point with respect to the solution $u$} if for any  $v\in \mathcal{G}_\gz$ we have
$$\mbox{ either }\; u \lsim v \quad \mbox{or } \; u \gsim v.$$

2. We say that $\zeta$ is a {\bf regular point of the equation $Q'(w)=0$ in $X$}   if for any two positive solutions $u,v\in \mathcal{G}_\gz$ we have
$$\mbox{ either }\; u \lsim v \quad \mbox{or } \; u \gsim v.$$

3. By {\bf uniqueness of positive solutions} we always mean uniqueness up to a multiplicative constant.
 }
\end{definition}

\begin{remarks} {\em 
(1) $\zeta$ is a regular point of (\ref{eq:1}) if any $u,v\in \mathcal{G}_\gz$ are comparable with respect to the $\lsim$ ordering. In other words, regularity means that the ordering $\lsim$ is total on $\mathcal{G}_\gz$.

(2) Equivalently, $\zeta$ is a regular point of the equation (\ref{eq:1}) if for
any two solutions $u,v\in \mathcal{G}_\gz$ the limit
$$
\underset{x \in X'}{\underset{x \to \zeta}{\lim}} \frac{u(x)}{v(x)}
\quad \mbox{exists.}
$$
 }
\end{remarks}

 Now we are ready to state our main conjecture which is partially answered in the present paper.
   \begin{conjecture}\label{main_conj}
Assume that Eq.~\eqref{eq:1} has a Fuchsian type singularity at $\zeta\in \pd \hat X$ and admits a (global) positive solution. Then
\begin{enumerate}
\item[i)] $\zeta$ is a regular point of equation (\ref{eq:1}).
\item[ii)]  Equation \eqref{eq:1} admits a unique (global) positive solution of minimal growth in a neighborhood of $\bo \hat{X} \sm \{\zeta\}$.
\end{enumerate}
   \end{conjecture}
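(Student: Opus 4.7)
My plan is to exploit the dilation invariance of the Fuchsian condition, as hinted in the introduction. Choose a reference point $x_0 \in X$, a sequence $R_n$ tending to $0$ (or to $\infty$ if $\zeta=\infty$), and consider the normalized rescalings
\[
u_n(x) := \frac{u(R_n x)}{u(R_n x_0)}.
\]
Since $|x|^p |V(x)|\leq C$, the rescaled potential $V_n(x) := R_n^p V(R_n x)$ satisfies the same bound, and $u_n$ solves $-\pl u_n + V_n |u_n|^{p-2} u_n = 0$ on a domain converging to a fixed limit cone $X_*$. The family $\{u_n\}$ is uniformly bounded on compact subsets of $X_*$ by the interior Harnack inequality, and the boundary Harnack inequality on the $C^2$-portion of $\pd X$ extends these bounds up to the boundary. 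Standard $C^{1,\alpha}_{\mathrm{loc}}$ regularity for the $p\,$-Laplacian then provides compactness, so along a subsequence $u_n \to \tilde u$ locally uniformly, where $\tilde u$ is a positive solution on $X_*$ of a limit equation whose potential is homogeneous of degree $-p$.

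The regularity part (i) then reduces to a Liouville-type statement for this scale-invariant limit equation on $X_*$: every positive solution vanishing on $\pd X_* \setminus \{0,\infty\}$ should be of the separated form $\tilde u(r,\omega) = r^{\alpha_\pm}\phi(\omega)$, where $\phi$ is a principal eigenfunction of an associated $p\,$-Laplace eigenvalue problem on the cross-section and $\alpha_\pm$ are the two roots of the corresponding characteristic equation. Minimality (or growth restrictions inherited from the global positive solution whose existence is assumed) selects a single exponent, giving uniqueness of $\tilde u$ up to a constant. Consequently, the ratio $u(x)/v(x)$ has a positive limit as $x\to\zeta$ for any $u,v\in\mathcal{G}_\zeta$, which is precisely (i).

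For (ii), suppose $u_1,u_2$ are two positive global solutions of minimal growth in a neighborhood of $\pd\hat X\setminus\{\zeta\}$. Their germs at $\zeta$ lie in $\mathcal{G}_\zeta$, so by (i) the ratio $u_1/u_2$ converges to some $c>0$ at $\zeta$. On complements of shrinking neighborhoods of $\zeta$, the minimal-growth property gives two-sided comparisons between $cu_2$ and $u_1$ on boundaries of exhausting subdomains, and the weak comparison principle for $Q'_V$ then forces $u_1 \equiv cu_2$ on $X$. The main obstacle I anticipate is the Liouville step on $X_*$: classifying \emph{every} positive solution of the scale-invariant limit equation (including ones not obviously of separated form), and ensuring that the whole family $\{u_R\}$---not merely a subsequence---converges, so that the limit in the $\lsim$ ordering genuinely exists rather than oscillating as $R\to 0$. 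A linearization of $Q'_V$ along a putative limit, combined with simplicity of the generalized principal eigenvalue for the associated cross-sectional problem, appears to be the key analytic ingredient to pin down the correct asymptotic exponent and to rule out secondary asymptotics.
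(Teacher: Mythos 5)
The statement you were asked to prove is labeled a \emph{conjecture} in the paper, and it remains open in the generality stated; the paper proves it only under additional hypotheses (weak Fuchsian singularity, Theorem~\ref{thm:weak}; spherical symmetry, Theorem~\ref{thm:sym}; or an assumed strong comparison principle for a limiting dilated equation, Theorem~\ref{thm:assume_scp}). Your proposal correctly reconstructs the paper's general strategy: rescale $u_n(x)=u(R_nx)/u(R_nx_0)$, invoke uniform interior and boundary Harnack inequalities together with $C^{1,\alpha}_{\mathrm{loc}}$ estimates to extract a subsequential limit solving a limiting dilated equation on a cone, and then deduce part (ii) from part (i) using the monotonicity of $m_r:=\inf_{S_r\cap X'}u/v$, $M_r:=\sup_{S_r\cap X'}u/v$ and the weak comparison principle (this is exactly Proposition~\ref{thm:RegUni} via Lemma~\ref{lem:mon} and Corollary~\ref{cor1}). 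You also correctly flag the Liouville step on the limit cone as the main obstacle. That obstacle, however, is not a technicality to be cleaned up; it is precisely what is unknown, and your sketch does not overcome it.

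Two concrete gaps. First, the rescaled potential $V_n=R_n^pV(R_n\cdot)$ converges (weak$^*$ in $L^\infty_{\mathrm{loc}}$, along a subsequence) to some $W$ satisfying only the Fuchsian bound $|x|^p|W(x)|\leq C$ (Proposition~\ref{lem:limitdil}); it is \emph{not} $(-p)$-homogeneous in general. Example~\ref{ex:weakFuchs} exhibits a Fuchsian $V$ whose single dilation limit is a nonzero, nonhomogeneous potential, so the limit problem need not reduce to a separable cross-sectional eigenvalue problem, and the classification $\tilde u(r,\omega)=r^{\alpha_\pm}\phi(\omega)$ is unavailable. Second, and more fundamentally: even when the limit equation \emph{is} the pure $p\,$-Laplace equation on a smooth cone, where Tolksdorf and Porretta--V\'eron produce unique separable solutions $|x|^{\beta_0}\phi_0$ and $|x|^{\beta_\infty}\phi_\infty$, one still cannot conclude that every positive solution vanishing on the lateral boundary has the asymptotics of one of them without a strong comparison principle, and SCP (Conjecture~\ref{con:SCP}) is open for $p\neq2$ even for two positive $p\,$-harmonic functions. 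The paper is explicit about this in Example~\ref{example3}: there it obtains the Liouville theorem on a cone but states it is unable to prove regularity of the singular points $0,\infty$. The ``linearization of $Q'_V$ plus simplicity of the principal eigenvalue'' argument you propose is exactly what SCP would legitimize and is precisely the missing ingredient. Where the paper does succeed, it sidesteps the cone difficulty: for isolated singularities with a weak Fuchsian potential it iterates dilations until the pure $p\,$-Laplacian in $\mathbb{R}^d\sm\{0\}$ appears, and then uses the known isolated-singularity asymptotics (Serrin, Kichenassamy--V\'eron, and the modified Kelvin transform of Appendix~\ref{sec:asymp}) in place of SCP; for radial potentials it exploits the structure of the critical set so that Theorem~\ref{thm:SCP} applies. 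Without one of these special structures, your argument does not close.
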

\begin{remarks}\label{rem:Liouville}{\em 
(1) Part i) of Conjecture \ref{main_conj} should be considered as a removable singularity statement, while part ii) is a Liouville (or Picard)-type  statement. In particular,  if  $X = \mathbb{R}^d$ and $\zeta = \infty$, then Conjecture \ref{main_conj} ii) asserts that the positive Liouville theorem is valid.

(2) One can rephrase part ii) of  Conjecture \ref{main_conj} as saying that the Martin boundary of
\eqref{eq:1} at a Fuchsian type singular point is a singleton.
 }
\end{remarks}
It turns out that the first part of Conjecture~\ref{main_conj} implies the second part. We have:
\begin{proposition} \label{thm:RegUni}
Assume that equation (\ref{eq:1}) has a Fuchsian type singularity at $\zeta\in \pd \hat X$ and admits a (global) positive solution. If $\zeta$ is
a regular point of \eqref{eq:1}, then (\ref{eq:1}) admits a unique positive
solution in $X$ of minimal growth in a neighborhood of $\pd \hat X \sm \{\zeta\}$.
\end{proposition}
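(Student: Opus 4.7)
My proof plan proceeds by a two-sided squeeze that exploits the $(p-1)$-homogeneity of $Q'$, namely $Q'(cw)=c^{p-1}Q'(w)$ for every $c>0$, so that any positive constant multiple of a positive solution is again a positive solution (and hence a supersolution) of $Q'(w)=0$. Let $u$ and $v$ be two positive solutions of~\eqref{eq:1} of minimal growth in a neighborhood of $\partial\hat X\setminus\{\zeta\}$. By comparison with suitable barriers (a standard consequence of the minimal growth property, cf.\ Remarks~\ref{rem1}), both $u$ and $v$ vanish continuously on $\partial X\setminus\{\zeta\}$, so they both lie in the germ class $\mathcal{G}_\zeta$. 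The assumed regularity of $\zeta$ then ensures that
\[
L:=\underset{x\in X}{\underset{x\to\zeta}{\lim}}\frac{u(x)}{v(x)}
\]
exists in $[0,\infty]$.

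I would next rule out the extreme values. Suppose $L=0$ and fix an arbitrary $x_0\in X$. For any $\varepsilon>0$, choose a relative neighborhood $K'\Subset\hat X$ of $\zeta$ with smooth $\Gamma:=\partial K'\cap X$, such that $x_0\notin K'$ and $u\le\varepsilon v$ on $\Gamma$. Since $\varepsilon v$ is a positive (super)solution in $X\setminus K'$, the minimal growth property of $u$ gives $u\le\varepsilon v$ throughout $X\setminus K'$; in particular $u(x_0)\le\varepsilon v(x_0)$. Letting $\varepsilon\to 0$ forces $u(x_0)=0$, contradicting $u>0$. The case $L=\infty$ is excluded by the symmetric argument applied to $v$.

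It remains to treat $L\in(0,\infty)$. Fix $x_0\in X$ and $0<\varepsilon<L$, and choose a small relative neighborhood $K'$ of $\zeta$ with smooth $\Gamma:=\partial K'\cap X$ and $x_0\notin K'$, on which
\[
(L-\varepsilon)\,v\le u\le (L+\varepsilon)\,v.
\]
Applying the minimal growth of $u$ with the positive supersolution $(L+\varepsilon)v$ on one side, and the minimal growth of $v$ with the positive supersolution $(L-\varepsilon)^{-1}u$ on the other, I obtain $(L-\varepsilon)v\le u\le (L+\varepsilon)v$ throughout $X\setminus K'$. Evaluating at $x_0$ and letting $\varepsilon\to 0$ gives $u(x_0)=L\,v(x_0)$, whence $u\equiv L\,v$ in $X$.

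The only delicate point is the verification that a solution of minimal growth near $\partial\hat X\setminus\{\zeta\}$ automatically lies in $\mathcal{G}_\zeta$, i.e.\ vanishes continuously on $\partial X\setminus\{\zeta\}$, so that the regularity hypothesis may be invoked; once this membership and the elementary $(p-1)$-homogeneity of $Q'$ are in hand, the two-sided comparison argument above is essentially automatic, the crucial point being that the roles of $u$ and $v$ in the minimal growth property can be interchanged thanks to homogeneity.
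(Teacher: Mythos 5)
Your proof is correct. The argument is in the same general spirit as the paper's (regularity of $\zeta$ forces the limit of $u/v$ to exist, and minimal growth then forces global proportionality), but you take a genuinely more direct route. The paper first proves Lemma~\ref{lem:BHI} (uniform Harnack) and Lemma~\ref{lem:mon}, and from these deduces Corollary~\ref{cor1}: $m\,v\le u\le M\,v$ on all of $X$ with $0<m\le M<\infty$. Regularity then forces $m=M$. You bypass the Harnack machinery entirely: you invoke regularity at the outset to obtain $L:=\lim_{x\to\zeta}u/v\in[0,\infty]$, rule out $L\in\{0,\infty\}$ by a contradiction that uses only the minimal growth property and the $(p-1)$-homogeneity of $Q'$, and then run a two-sided squeeze with $(L\pm\varepsilon)v$ and $(L-\varepsilon)^{-1}u$ as comparison supersolutions. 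Both approaches require observing (as you flag, and as Remarks~\ref{rem1}(3) justifies) that solutions of minimal growth near $\partial\hat X\setminus\{\zeta\}$ vanish continuously on the $C^2$-portion of $\partial X$ near $\zeta$, hence lie in $\mathcal{G}_\zeta$, so that the regularity hypothesis may be applied. The trade-off: your proof is more self-contained for this particular proposition, while the paper's version is a short corollary of lemmas it develops anyway (and reuses in the proofs of Propositions~\ref{thm:dilatation} and \ref{prop:crit}).
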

   \begin{remarks}\label{rem:1} {\em 
    (1)\! Conjecture \ref{main_conj} holds true for second-order {\em linear} (not necessarily symmetric) equations with a Fuchsian type singularity at $\zeta$. Moreover, in this case, if $\gz$  is a nonisolated singular point it is sufficient to assume that $\pd X$ is Lipschitzian near $\zeta$.
In particular, the conjecture holds for Eq.~\eqref{eq:1} with $p=2$ \cite{Pin1}.

(2)
Let $\zeta\in \pd \hat{X}$ be a Fuchsian isolated singular point, and let $u$ be a positive solution of the equation $Q'(w)=0$ in some neighborhood $X' \subset X$ of $\zeta$. It was shown in examples~9.1 and 9.2 of \cite{Pin1} that
$$
\underset{x\in X'}{\underset{x \to \zeta }{\lim}} u(x)
$$
might not exist even in the linear case ($p=2$). Nevertheless, in Theorem~\ref{thm:nonnegativeV} we partially answer Question~9.5 of \cite{Pin1} by proving that (under some further assumptions) if $V$ is a {\em nonnegative} potential and $\zeta$ is a Fuchsian isolated singular point of  $\pd \hat{X}$, then
$$
\underset{x\in X'}{\underset{x \to \zeta }{\lim}} u(x)  \quad \mbox{exists}.
$$
In a subsequent paper  \cite{MY3},  the authors prove the existence of the limit for sign changing $V$ satisfying additional assumptions.
 }
   \end{remarks}
We first concentrate on the case where $\zeta$ is an {\em isolated point}  of
the boundary $\pd \hat{X}$, that is, either $\zeta = 0$ and $X$ is a punctured neighborhood of
the origin, or $\zeta = \infty$ and $X$ is an exterior domain. We postpone
the study of non-isolated singularity to Section \ref{sec:boundary}.
 For isolated singularities we prove  that the two statements of Conjecture \ref{main_conj} hold true in two particular cases. In the first case we deal with {\em weakly Fuchsian singular point} (to be defined later), where we strengthen our assumption on the behavior of the potential $V$ near the (isolated) singular point $\zeta$.  For the precise formulation of this result see Theorem~\ref{thm:weak}. Meanwhile, for illustration we present a particular case of the aforementioned theorem.
\begin{theorem}\label{thm:weakI}
 Suppose that (\ref{eq:1}) admits a positive solution, and let $\zeta \in \pd \hat{X}$ be an isolated point of $\pd \hat{X}$.  Assume that $|x|^p V(x)$
is continuous near $\zeta$, and that
$$
\lim_{x \to \zeta} |x|^p V(x) = 0.
$$
  Then the two assertions of Conjecture~\ref{main_conj} hold true.
\end{theorem}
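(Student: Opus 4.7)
By Proposition~\ref{thm:RegUni}, it suffices to prove part (i) of Conjecture~\ref{main_conj}, namely that $\zeta$ is a regular point of \eqref{eq:1}. The guiding idea is that, under the hypothesis $\lim_{x\to\zeta}|x|^pV(x)=0$, equation \eqref{eq:1} is asymptotically the $p$-Laplace equation near $\zeta$, so the asymptotic behavior of its positive solutions should be dictated by the well-understood behavior of positive $p$-harmonic functions near an isolated singularity. I carry out the argument for $\zeta=0$; the case $\zeta=\infty$ is handled in a parallel way, as the inversion $x\mapsto x/|x|^2$ reduces it to the previous one.

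Fix two solutions $u,w\in\mathcal{G}_\zeta$. The key tool is the dilation: pick a unit vector $e$ and set
$$
u_R(x):=\frac{u(Rx)}{u(Re)},\qquad w_R(x):=\frac{w(Rx)}{w(Re)},
$$
for $R>0$ small. A direct computation shows that $u_R$ satisfies $-\pl(u_R)+V_R|u_R|^{p-2}u_R=0$ with $V_R(x):=R^pV(Rx)$, and the hypothesis yields
$$
|x|^pV_R(x)=|Rx|^pV(Rx)\to 0\qquad\text{as } R\to 0,
$$
uniformly on compact subsets of $\mathbb{R}^d\setminus\{0\}$. Uniform Harnack bounds for $p$-Laplace type equations, combined with the $C^{1,\alpha}_{\mathrm{loc}}$ interior regularity of DiBenedetto--Tolksdorf, make $\{u_R\}$ and $\{w_R\}$ precompact in $C^{1,\alpha}_{\mathrm{loc}}(\mathbb{R}^d\setminus\{0\})$. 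Along a subsequence $R_n\to 0$ they converge to positive $p$-harmonic functions $U$ and $W$ on $\mathbb{R}^d\setminus\{0\}$, each taking value $1$ at $e$.

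Next, I invoke the classification of positive $p$-harmonic functions on $\mathbb{R}^d\setminus\{0\}$ (see e.g.~\cite{KV}): any such function is radial, of the form $a+b|x|^{(p-d)/(p-1)}$ when $p\neq d$ (with the obvious logarithmic variant when $p=d$, and the constant when the singleton is $p$-polar), with $a,b\geq 0$ and, by our normalization, $a+b=1$. Any two members of this one-parameter family are comparable under $\lsim$ at the origin, and the limit of their ratio can be read off from the explicit formula. Hence regularity of $\zeta$ will follow once one shows that the subsequential limit $U$ is in fact uniquely determined by $u$, so that $u_R$ converges as $R\to 0$ rather than only along a subsequence.

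The main obstacle is this uniqueness step. My plan is to rule out two distinct profiles $U_1\neq U_2$ by iterating the dilation on nested annuli $\{R_{n+1}\le|x|\le R_n\}$ and applying the $p$-Laplacian comparison principle (valid on each annulus because $V_R$ is pointwise arbitrarily small there in the Fuchsian sense) against the explicit radial $p$-harmonic barriers $a+b|x|^{(p-d)/(p-1)}$; two candidate profiles would force $u$ to straddle incompatible asymptotic envelopes, contradicting the Harnack bounds. Once the full limits $u_R\to U$ and $w_R\to W$ are secured, the factorization
$$
\frac{u(Rx)}{w(Rx)}=\frac{u_R(x)}{w_R(x)}\cdot\frac{u(Re)}{w(Re)},
$$
together with the classification (which forces $u(Re)$ to be either asymptotically constant or of precise order $|Re|^{(p-d)/(p-1)}$, and similarly for $w$) produces a genuine limit of $u(x)/w(x)$ in $[0,\infty]$ as $x\to\zeta$. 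This is the regularity of $\zeta$, and Proposition~\ref{thm:RegUni} then upgrades it to assertion (ii), completing the proof.
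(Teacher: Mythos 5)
Your overall strategy --- dilation toward the $p$-Laplace equation, then Proposition~\ref{thm:RegUni} to upgrade regularity to the Liouville statement --- is the right one and is in the same spirit as the paper's. But the central step is not available: you invoke a ``classification of positive $p$-harmonic functions on $\mathbb{R}^d\setminus\{0\}$ \ldots\ any such function is radial, of the form $a+b|x|^{(p-d)/(p-1)}$.'' This is not a theorem, and it does not follow from \cite{KV}. That reference, and the Serrin/V\'eron results recorded in Theorem~\ref{thm:asymp}, give only an \emph{asymptotic} dichotomy at the singular point (removable singularity versus $u\sim\mu_p$), not a global identification of $u$ with a member of the two-parameter family $\{a+b\mu_p\}$. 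For $p\neq 2$ the operator $\pl$ is nonlinear, so one cannot subtract a singular part and apply the Liouville theorem as in the case $p=2$; an exact classification of the sort you use would in effect settle the strong comparison principle for positive $p$-harmonic functions, which the paper explicitly records as open (Conjecture~\ref{con:SCP}, Remarks~\ref{rem:3}(1)). The ``plan'' you outline for uniqueness of the blow-up limit hangs on this same unavailable classification, and the closing factorization $u(Rx)/w(Rx)=\bigl(u_R(x)/w_R(x)\bigr)\cdot\bigl(u(Re)/w(Re)\bigr)$ is circular: the second factor is precisely the quantity whose limit you are trying to show exists, and normalizing $u_R$ and $w_R$ by different constants discards that information.

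The paper avoids both problems. Theorem~\ref{thm:weakI} is proved by observing that its hypotheses force $V$ to have a \emph{weak Fuchsian singularity} (a single dilation already gives $\resc^{\{R_n\}}(Q)=-\pl$ on $\mathbb{R}^d\setminus\{0\}$), so that Theorem~\ref{thm:weak} applies. The engine behind Theorem~\ref{thm:weak} is Proposition~\ref{thm:dilatation}, whose proof normalizes $u$ and $v$ by the \emph{same} factor $u(R_n x_0)$ (so the ratio survives rescaling), uses the monotonicity of Lemma~\ref{lem:mon} to conclude that $\sup_{S_R}u_\infty/v_\infty=M$ and $\inf_{S_R}u_\infty/v_\infty=m$ for \emph{every} $R>0$, and then needs only that $0$ or $\infty$ is a \emph{regular point} of the limiting $p$-Laplace equation (Proposition~\ref{thm:cor}, which rests purely on the asymptotic results, including Theorem~\ref{thm:asymp2} proved in the Appendix for $p>d$) to force $m=M$. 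In particular no explicit classification of $p$-harmonic functions and no uniqueness of the blow-up limit is needed: the argument works along any convergent subsequence. If you retain your own normalization, you would need to replace the classification by the sphere-constancy argument of Proposition~\ref{thm:dilatation} to close the circle.
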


Let $X$ be a domain in $\mathbb{R}^d$, $x_0\in X$, and let $V\in L^\infty_{\mathrm{loc}}(X)$.  Without
loss of generality, we may assume that $x_0=0$. By applying Theorem~\ref{thm:weakI} in $X\sm \{0\}$, and $\zeta=0$, we readily obtain the following result which is new for $p>d$ (the case $p \leq d$ follows also from \cite{ky3,Pin2}).
\begin{corollary}\label{cor:dlessp}
Suppose that \eqref{eq:1} admits a positive solution, and let $x_0\in X$. Then the equation $Q'(u)=0$ admits a unique positive solution in $X\sm \{x_0\}$ of  minimal growth in a neighborhood of infinity in $X$.
\end{corollary}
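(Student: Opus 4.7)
The plan is to reduce the statement to Theorem~\ref{thm:weakI} by viewing $Y := X \setminus \{x_0\}$ as the new ambient domain and $\zeta = x_0$ as an isolated singular boundary point. After a translation we may assume $x_0 = 0$, so $\zeta = 0$.

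First I would verify the hypotheses of Theorem~\ref{thm:weakI} for the pair $(Y,\zeta=0)$. Since $X$ is open and $x_0\in X$, the point $0$ is an isolated component of $\partial \hat{Y}$. Any positive solution of $Q'(u)=0$ in $X$ restricts to a positive solution in $Y$, so \eqref{eq:1} admits a positive solution in $Y$. The Fuchsian vanishing condition is immediate from $V\in L^\infty_{\mathrm{loc}}(X)$: there is a neighborhood of $0$ on which $|V|\leq M$, whence $|x|^p|V(x)|\leq M|x|^p\to 0$ as $x\to 0$, and this continuous decay at $\zeta$ is precisely what Theorem~\ref{thm:weakI} requires.

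Conclusion (ii) of Conjecture~\ref{main_conj}, as delivered by Theorem~\ref{thm:weakI}, then produces a positive solution of $Q'(w)=0$ in $Y$, unique up to a multiplicative constant, of minimal growth in a neighborhood of $\partial \hat{Y}\setminus\{0\}$.

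The final step, which I expect to be the only point demanding real care, is to identify the two notions of minimal growth appearing in parts (1) and (2) of Definition~\ref{def:minimal_gr}. Concretely, every relatively compact open set $K'\Subset \hat{Y}$ which is a relative neighborhood of $\zeta=0$ with smooth $\Gamma:=\partial K'\cap Y$ is of the form $K'=(K\setminus\{x_0\})\cup\{0\}$ for a compact $K\Subset X$ with $x_0$ in its interior and smooth $\partial K$, and conversely. Hence the admissible test sets, their boundaries, and the admissible classes of positive continuous supersolutions $v$ coincide in the two definitions. Therefore a positive solution in $Y$ of minimal growth in a neighborhood of $\partial \hat{Y}\setminus\{0\}$ is the same object as a positive solution in $X\setminus\{x_0\}$ of minimal growth in a neighborhood of infinity in $X$ in the sense of Definition~\ref{def:minimal_gr}(1), and the uniqueness statement transfers, proving the corollary.
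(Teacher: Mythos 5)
Your proposal is correct and follows the paper's own approach exactly: the paper's ``proof'' is simply the one-line remark preceding the corollary, namely translate so that $x_0=0$ and apply Theorem~\ref{thm:weakI} with ambient domain $X\setminus\{0\}$ and $\zeta=0$; you have just spelled out the verification of the hypotheses and the matching of the two notions of minimal growth in Definition~\ref{def:minimal_gr}. The one small point worth noting is that Theorem~\ref{thm:weakI} as stated assumes $|x|^p V(x)$ is \emph{continuous} near $\zeta$, which a generic $V\in L^\infty_{\mathrm{loc}}(X)$ need not satisfy; your bound $|x|^p|V(x)|\le M|x|^p\to 0$ is instead precisely what shows the scaled potentials $V_R\to 0$ weakly-$*$, i.e.\ that $V$ has a weak Fuchsian singularity at $0$, so the cleaner citation is Theorem~\ref{thm:weak} (of which Theorem~\ref{thm:weakI} is a special case) — the same slight looseness is present in the paper's own remark, so the substance is unaffected.
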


A second case where the assertions of Conjecture~\ref{main_conj} hold true is the {\em spherical symmetric} case. In particular, $\gz$ is again an isolated singularity, and $X$ is one of the following domains: $\mathbb{R}^d$,\,$\mathbb{R}^d\sm\{0\}$,\, $\mathbb{R}^d\sm B_R$,\, $B_R\sm\{0\} $. We have
\begin{theorem}\label{thm:sym}
 Suppose that the domain $X$ and the potential $V$ are spherical
symmetric, and that Eq.~\eqref{eq:1} admits a positive
solution. Assume further that $V$ has a Fuchsian type singularity at
$\zeta$, where $\zeta=0$ or $\zeta=\infty$.  Then

(i) $\zeta$ is a regular point of \eqref{eq:1}.

(ii) Equation \eqref{eq:1} admits a unique positive solution of minimal growth in a
neighborhood of infinity in $X \sm \{\zeta\}$. This solution is spherically
symmetric.

(iii) For any $u\in \mathcal{G} _\gz$ there exists a radial solution $\tilde{u}\in \mathcal{G} _\gz$ such that $u\sim \tilde{u}$.
\end{theorem}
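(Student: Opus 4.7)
The plan has three stages: establish (iii) first, deduce (i) from (iii), and obtain (ii) by combining (i) with Proposition~\ref{thm:RegUni} together with a rotation-averaging argument for the radial symmetry of the minimizer.

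For (iii), given $u\in\mathcal{G}_\zeta$, the goal is to produce a radial solution $\tilde u\in\mathcal{G}_\zeta$ asymptotic to $u$ at $\zeta$. The first ingredient is a uniform spherical Harnack bound $\max_{|x|=r}u\le C\min_{|x|=r}u$, with $C$ independent of $r$ near $\zeta$. To prove it I would consider the dilated functions $u_R(x):=u(Rx)/u(Re_1)$, which satisfy the equation with potential $V_R(x):=R^p V(Rx)$; the Fuchsian condition yields $|V_R(x)|\le C|x|^{-p}$ uniformly in $R$, so the Harnack inequality for $p$-Laplacian type equations with bounded potential applies to $u_R$ in a fixed sub-annulus around $|x|=1$ with a constant independent of $R$, and in particular gives the sphere bound. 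The second ingredient is to classify positive solutions of the radial ODE
\begin{equation*}
-\bigl(r^{d-1}|w'|^{p-2}w'\bigr)' + r^{d-1}V(r)|w|^{p-2}w = 0
\end{equation*}
at the Fuchsian endpoint: after the change of variable $t=\log r$ and the substitution $\varphi:=rw'/w$, the ODE reduces to a first-order nonlinear equation $\dot\varphi = F(\varphi,\,r^p V(r))$ with coefficients bounded in $t$. A monotone shooting/phase-plane argument, combined with the comparison principle for this first-order equation, sorts positive radial solutions at $\zeta$ into at most two asymptotic classes---a ``small'' one and a ``large'' one---within which any two positive solutions have a positive limit ratio. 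The Harnack bound pinches $u$ between two radial solutions of the same class, and the dichotomy pins down a radial $\tilde u$ with $u\sim\tilde u$.

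For (i), given $u,v\in\mathcal{G}_\zeta$, the previous step produces radial $\tilde u,\tilde v\in\mathcal{G}_\zeta$ with $u\sim\tilde u$ and $v\sim\tilde v$, so $\lim_{x\to\zeta}u(x)/v(x)$ exists if and only if $\lim_{r\to\zeta}\tilde u(r)/\tilde v(r)$ does, and the latter is the one-dimensional conclusion supplied by the phase-plane analysis above. Proposition~\ref{thm:RegUni} then yields the existence and uniqueness of a positive solution $u_{\min}$ of minimal growth in a neighborhood of $\partial\hat X\setminus\{\zeta\}$. For any $\rho\in O(d)$, rotation invariance of $X$ and $V$ makes $u_{\min}\circ\rho$ another such solution, so uniqueness forces $u_{\min}\circ\rho = c(\rho)\,u_{\min}$ with $c:O(d)\to\mathbb{R}_+$ a continuous homomorphism; compactness of $O(d)$ gives $c\equiv 1$, hence $u_{\min}$ is radial, completing (ii).

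The main obstacle is the classification of positive solutions of the nonlinear radial ODE at a Fuchsian endpoint. For $p=2$ it reduces to classical Frobenius theory, but for general $p$ the equation for $\varphi$ is fully nonlinear and, since only $r^pV(r)\in L^\infty$ is assumed (not convergence as $r\to\zeta$), non-autonomous in $t$. The delicate point is to extract the dichotomy and the existence of a limit ratio from comparison and monotonicity of trajectories alone, without linearizing at a stationary point.
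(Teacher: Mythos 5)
Your rotation-averaging argument for part (ii) is a genuinely nice alternative to the paper's route (which constructs a radial minimal-growth solution directly by a radially symmetric exhaustion): once uniqueness up to a constant is known, the map $c:O(d)\to\mathbb{R}_+$, $u_{\min}\circ\rho = c(\rho)u_{\min}$, is indeed a continuous homomorphism from a compact group and hence trivial. That part is correct.

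For parts (i) and (iii), however, there is a genuine gap, and it lies exactly where you flag ``the main obstacle,'' plus one additional place you do not flag. First, the phase-plane classification of positive radial solutions at a non-autonomous Fuchsian endpoint is asserted but not carried out. The substitution $\varphi=rw'/w$ is the right idea, and if one could establish non-crossing of trajectories then $\log(\tilde u/\tilde v)$ would be monotone and the limit would exist; but the resulting first-order equation has a right-hand side that is not Lipschitz in $\varphi$ near $\varphi=0$ when $p\neq 2$, so uniqueness (hence non-crossing) is not automatic, and with $r^pV(r)$ only bounded (not convergent) the equation is genuinely non-autonomous in $t$. None of this is addressed. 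Second, and more importantly, even granting a complete radial dichotomy, the step ``the Harnack bound pinches $u$ between two radial solutions of the same class, and the dichotomy pins down $\tilde u$ with $u\sim\tilde u$'' does not follow: pinching $c\,\tilde u_1\le u\le C\,\tilde u_2$ with $\tilde u_1\sim\tilde u_2$ gives only $u\approx\tilde u_1$ (bounded ratio), not $u\sim\tilde u_1$ (convergent ratio). The ratio $u/\tilde u_1$ could a priori oscillate between two constants as $x\to\zeta$. This is precisely what the paper's proof is designed to exclude: it first proves regularity for the pair $(u,\tilde u)$ where $\tilde u$ is radial, via the dilatation argument (Proposition~\ref{thm:dilatation}) combined with the strong comparison principle under nondegeneracy (Theorem~\ref{thm:SCP}), using that the critical set of a radial dilated limit $u_\infty$ is a union of spheres, so either $u_\infty$ is locally constant (contradicting nontriviality of the dilated potential) or there is an annulus free of critical points where SCP applies; only then is the comparability from Lemma~\ref{lem:sym} upgraded to asymptotic equivalence. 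Your proposal omits the SCP/critical-set mechanism entirely and so cannot rule out oscillation of $u/\tilde u$.

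In short: (ii) is a valid alternative proof modulo (i); the argument for (i)/(iii) replaces the paper's dilation-plus-SCP mechanism with an ODE phase-plane analysis that is neither completed nor sufficient, since it addresses only the comparison of two radial solutions and not the comparison of a general $u\in\mathcal{G}_\zeta$ with a radial one.
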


\begin{example}\label{example1}{\em 
Let $X=\mathbb{R}^d\sm \{0\},\,d > 1$. Consider the equation
   \begin{equation}\label{eq:hardy}
    Q'(u):=-\Delta_p(u)-\lambda \frac{|u|^{p-2}u}{|x|^p}=0 \qquad \mbox{ in } X.
   \end{equation}
So, \eqref{eq:hardy} has Fuchsian type singularities at $\zeta =0$ and at $\zeta =\infty$.
By Hardy's inequality  \eqref{eq:hardy} admits a positive solution if and only if  $$\lambda \leq c_H:=\left|\frac{p-d}{p}\right|^p.$$
Moreover, in this case, \eqref{eq:hardy} is critical in $X$ if and only if $\lambda = c_H$ (for the definition of criticality see Remarks~\ref{rem1} (1)).  Furthermore, for  $\lambda = c_H$ the corresponding unique positive (super) solution (ground state) of \eqref{eq:hardy} is given by $u(x)=|x|^{\gamma_*}$, where $\gamma_*:=(p-d)/p$ \cite{ky7}.

On the other hand, if $\lambda < c_H$, then the corresponding radial equation
$$-|v'|^{p-2}\left[(p-1)v''+\frac{d-1}{r}v'\right]-\lambda\frac{|v|^{p-2}v}{r^p}
=0\qquad r\in (0,\infty)$$ has two positive solutions of the form $v_\pm(r):=|r|^{\gamma_\pm(\lambda)}$, where
$\gamma_-(\lambda)<\gamma_*<\gamma_+(\lambda)$, and $\gamma_\pm(\lambda)$ are solutions of the transcendental 	 equation
$$-\gamma |\gamma|^{p-2}[\gamma(p-1)+d-p]=\lambda.$$
Consequently, \eqref{eq:hardy} has two positive solutions of the form $u_\pm(x):=|x|^{\gamma_\pm(\lambda)}$. It follows from Proposition~C.1 in \cite{LLM} (see also \cite{Pin2}, Theorem~7.1) that $u_-$ is a positive solution of minimal growth in  $\pd\hat{X}\sm \{0\}$. By a similar argument, $u_+$ is a positive solution of minimal growth in $\pd\hat{X}\sm \{\infty\}$.
Therefore, Theorem~\ref{thm:sym} implies that
 $u_-$ is the unique positive solution
of minimal growth in $\pd\hat{X}\sm \{0\}$, while $u_+$ is the unique positive solution
of minimal growth in $\pd\hat{X}\sm \{\infty\}$.

 Moreover, we have the following isotropy result. Let $v$ be any positive solution
of the equation $Q'(u)=0$ in a neighborhood of $\gz$. Then the two limits
$$
\lim_{|x| \to \gz} \frac{v(x)}{|x|^{\gamma_\pm}}
$$
exist.  Furthermore, For any $u\in \mathcal{G} _\gz$ there exists a radial solution $\tilde{u}\in \mathcal{G} _\gz$ such that $u\sim \tilde{u}$.
 }
   \end{example}


The outline of the paper is as follows. In Section~\ref{sec2} we introduce the notion of a weak Fuchsian singularity and discuss some other notions and results we need throughout the paper. In particular, we outline the dilatation technique that are used to prove the regularity of a singular point.
The proofs of the main results of the present paper rely on comparison techniques, dilatation arguments, and the regularity of singular points of limiting equations; these issues are discussed in sections
 \ref{sec:comparison}, \ref{sec:Harnack} and Appendix~\ref{sec:asymp}, respectively.
In particular, the key Proposition~\ref{thm:RegUni} is proved in Section
\ref{sec:Harnack}, the proof of Theorem~\ref{thm:sym} appears in
Section \ref{sec:proofs}, and in the appendix we prove for the case $p> d$ the exact asymptotic of positive $p\,$-harmonic functions defined in a neighborhood  of infinity. In Section~\ref{sec:boundary} we extend the results to the case of a nonisolated singularity. Finally, we conclude the paper in Section~\ref{sec7} with some examples, remarks and applications.

   \section{Preliminaries and main results}\label{sec2}
In this section we discuss the necessary background for our study of Liouville theorems and  present the main results of the paper.

The following notations and conventions will be used. We denote by $B_R(x_0)$ (respectively, $S_R(x_0)$) the open ball (respectively, sphere) of radius $R$ and a center at $x_0$, and let $B_R:=B_R(0)$ and $S_R:=S_R(0)$. The exterior of a ball will be denoted by $B_R^*:=\mathbb{R}^d\sm \overline{B_R}$. For a domain $\Omega\subset \mathbb{R}^d$ and $R>0$, we denote $$\Omega/R:=\{x\in \mathbb{R}^d \mid x =R^{-1}y, \;\;\mbox{ where } y\in \Omega \}.$$ Let $f,g \in C(\Omega)$ be positive functions. We use the notation $f\asymp g$ on $\Omega$ if there exists a positive constant $C$ such that
$$C^{-1}g(x)\leq f(x) \leq Cg(x) \qquad \mbox{ for all } x\in \Omega.$$
 We also denote $f_\pm(x):=\max\{0, \pm f(x)\}$, so, $f=f_+-f_-$.

Let $\Omega$ be a subdomain of $X$. By a {\em (weak) solution} of the equation $Q'(u)=0$ in $\Omega$,  we mean
a function
$v\in W^{1,p}_{\mathrm{loc}}(\Omega)$  such that
 \begin{equation} \label{solution} \int_\Omega (|\nabla v|^{p-2}\nabla
v\cdot\nabla\varphi+V|v|^{p-2}v\varphi)\,\mathrm{d}x=0 \qquad \forall \varphi\in\core.
\end{equation}
We say that a real function $v\in C^1_{\mathrm{loc}}(\Omega)$ is a
{\it  supersolution} (respectively, {\it  subsolution}) of
the equation $Q'(u)=0$ in $\Omega$ if for every nonnegative $\varphi\in\core$ we have
 \begin{equation}\label{supersolution}
\int_\Omega (|\nabla v|^{p-2}\nabla
v\cdot\nabla\varphi+V|v|^{p-2}v\varphi)\,\mathrm{d}x\geq 0 \mbox{ (respectively, }\leq 0\mbox{).}
\end{equation}

Next we introduce a dilatation process that uses the quasi-invariance of our equation under the scaling $x\mapsto Rx$.
For $R>0$ let $V_R$ be the {\bf scaled potential} defined by
\begin{equation} \label{eq:wfuchs}
 V_R(x) := R^p V(R x) \qquad  x \in X/R.
 \end{equation}
Let $\{R_n\}\subset \mathbb{R}_+$ be a sequence satisfying   $R_n \to \zeta$
(where $\zeta$ is either $0$ or $\infty$)
such that \begin{equation}\label{eq:rescalepotential}
V_{R_n}\underset{n\to \infty}{\longrightarrow} W \quad \mbox{ in  the weak$^*$ topology of  $L_{\mathrm{loc}}^{\infty}(Y)$,}
\end{equation}
 where $Y:=\lim_{n\to\infty} X/R_n$.
Define the {\bf limiting dilated equation with respect to} (\ref{eq:1}) (and the sequence $\{R_n\}$) as
\begin{equation}
\resc^{\{R_n\}}(Q)(w) := -\pl(w) + W |w|^{p-2} w=0 \qquad \mbox{ on } Y.
\end{equation}
\begin{remark}{\em 
In general $Y$ is a cone.
However, if $\zeta$ is an isolated singular point, then clearly $Y=\mathbb{R}^d\sm\{0\}$.
 }
\end{remark}

The next two propositions give basic properties of limiting dilated equations. The first proposition states that a Fuchsian singular point is invariant under a limiting dilation process $Q' \mapsto \resc^{\{R_n\}}(Q)$, the second states that the regularity of a Fuchsian singular point with respect to a limiting dilated equation implies the regularity of the corresponding singular point for the original equation.
\begin{proposition}\label{lem:limitdil}
Assume that the equation
 \begin{equation*}
Q'(u):=-\pl (u) + V |u|^{p-2} u = 0 \qquad \mbox{ in }  X
\end{equation*}
 has a Fuchsian singularity at $\zeta \in \pd \hat X$, and let
  \begin{equation*}
\resc^{\{R_n\}}(Q)(u):=-\pl (u) + W |u|^{p-2} u = 0 \qquad \mbox{ in }  Y:=\lim_{n\to\infty} X/R_n,
\end{equation*}
 be a limiting dilated equation corresponding to
a sequence $R_n \to \zeta$. Then the equation
$\resc^{\{R_n\}}(Q)=0$ in $Y$ has Fuchsian singularities both at $0$ and
at $\infty$.
\end{proposition}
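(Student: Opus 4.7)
The plan is to establish the Fuchsian bound for $W$ by observing that the quantity $|x|^p|V(x)|$ is covariant under the rescaling $x\mapsto Rx$, and then to transfer the resulting uniform $L^\infty$-estimate on the sequence $|x|^p V_{R_n}(x)$ to its weak* limit. Concretely, the algebraic identity
\[
|x|^p|V_R(x)| \;=\; R^p|x|^p|V(Rx)| \;=\; |Rx|^p|V(Rx)|
\]
is immediate from \eqref{eq:wfuchs}. Assume without loss that $\zeta=0$; the case $\zeta=\infty$ is handled identically. By Definition~\ref{def:Fuchs}, there exist $r,C>0$ with $|y|^p|V(y)|\le C$ for a.e.\ $y\in X\cap B_r$. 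Substituting $y=R_n x$ into the identity yields $|x|^p|V_{R_n}(x)|\le C$ for a.e.\ $x\in (X/R_n)\cap B_{r/R_n}$. Since $R_n\to 0$ forces $r/R_n\to\infty$, every compact $K\Subset Y\sm\{0,\infty\}$ is contained in this set for all sufficiently large $n$, and the uniform bound $\bigl\||x|^p V_{R_n}\bigr\|_{L^\infty(K)}\le C$ holds along the tail of the sequence.

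Next, I would pass this uniform bound to the limit. On any compact $K\Subset Y\sm\{0,\infty\}$ the weight $|x|^p$ is bounded above and below by positive constants, so multiplication by $|x|^p$ preserves weak* convergence: for any $\phi\in L^1(K)$ the function $|x|^p\phi$ again lies in $L^1(K)$, and \eqref{eq:rescalepotential} yields
\[
\int_K |x|^p V_{R_n}(x)\phi(x)\,\mathrm{d}x \;\xrightarrow[n\to\infty]{}\; \int_K |x|^p W(x)\phi(x)\,\mathrm{d}x.
\]
Hence $|x|^p V_{R_n}\to |x|^p W$ in the weak* topology of $L^\infty(K)$, and the weak* lower semicontinuity of the $L^\infty$-norm gives
\[
\bigl\||x|^p W\bigr\|_{L^\infty(K)}\;\le\;\liminf_{n\to\infty}\bigl\||x|^p V_{R_n}\bigr\|_{L^\infty(K)}\;\le\; C.
\]
Exhausting $Y\sm\{0,\infty\}$ by such compacta yields $|x|^p|W(x)|\le C$ for a.e.\ $x\in Y$. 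This uniform bound trivially supplies the relative neighborhoods $Y\cap B_1$ and $Y\cap B_1^*$ required by Definition~\ref{def:Fuchs} at the two ideal boundary points $0$ and $\infty$ of the cone $Y$.

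I do not anticipate a substantive obstacle; the only delicate point is that weak* convergence must be paired with multiplication by the weight $|x|^p$, which is unbounded both at $0$ and at $\infty$. This is precisely why the argument is first carried out on compacta bounded away from these two points, and the global conclusion is then recovered by exhaustion. Note that neither the equation $Q'(u)=0$ nor $\resc^{\{R_n\}}(Q)(u)=0$ enters the argument: the statement is purely about the scaling behavior of the potential, independent of the PDE structure.
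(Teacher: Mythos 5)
Your proof is correct and follows essentially the same route as the paper: both hinge on the scaling covariance $|x|^p|V_{R_n}(x)|=|R_n x|^p|V(R_n x)|$ to get a uniform $L^\infty$ bound on compacta away from $\{0,\infty\}$, and then pass to the weak* limit. The only cosmetic difference is that you invoke weak* lower semicontinuity of the $L^\infty$-norm as a known fact, while the paper reproves it in place via a contradiction argument using the indicator of the set $\{|x|^p W_\pm > C+\varepsilon\}$ as the test function.
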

\begin{proof}
By definition, there exist $C>0$ and a relative neighborhood $X' \subset X$ of $\zeta$ such that \begin{equation}
 |x|^p |V(x)| \leq C \quad \mbox{for a. e. }  x\in X'.
\nonumber
\end{equation}
We claim that
\begin{equation}
 |x|^p |W(x)| \leq C \quad \mbox{for a. e. }  x\in Y.
\nonumber
\end{equation}
For $\varepsilon >0$, and $0<r<R<\infty$ consider the sets
$$A_{\varepsilon,r,R}^\pm:=\{ x\in Y\cap (B_R\sm B_r) \;\mid \;|x|^p W_\pm(x) \geq  C+\varepsilon \}.$$
We need to prove that for any $\varepsilon >0$ and $0<r<R<\infty$ we have  $|A_{\varepsilon,r,R}^\pm|=0$. Suppose that $|A_{\varepsilon,r,R}^+|>\delta$, and let $\chi$ be the indicator function of $A_{\varepsilon,r,R}^+$. Then for $n$ large enough we have
$$\int_Y \chi(x)|x|^p (W_+(x)-V_{R_n}(x))\, \mathrm{d} x\geq (C+\varepsilon)\delta - C\delta= \delta \varepsilon$$
which is a contradiction.
\end{proof}
\begin{proposition} \label{thm:dilatation}
Let $\zeta \in \hat{X}$ be a Fuchsian singular point of \eqref{eq:1}, and assume that
there is a sequence $R_n \to \zeta$, such that either $0$ or $\infty$
is a regular point of the limiting dilated equation $\resc^{\{R_n\}}(Q)=0$ in
$Y=\lim_{n\to\infty} X/R_n$. Then $\zeta$ is a regular
point of the equation $Q'(u)=0$ in $X$.
\end{proposition}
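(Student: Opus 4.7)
My plan is to combine the dilatation construction of Section~\ref{sec2} with a compactness/diagonal argument. Let $u,v\in\mathcal{G}_\zeta$ be arbitrary; I must establish that $\lim_{x\to\zeta} u(x)/v(x)$ exists in $[0,\infty]$. Fix a base point $x_0\in Y$ and, for $R>0$ close to $\zeta$, define the rescaled solutions
\[
u^R(x):=\frac{u(Rx)}{u(Rx_0)},\qquad v^R(x):=\frac{v(Rx)}{v(Rx_0)},
\]
normalized so that $u^R(x_0)=v^R(x_0)=1$ and each solving $-\Delta_p w+V_R|w|^{p-2}w=0$ on $X/R$.

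The Fuchsian assumption yields $|x|^p|V_R(x)|\le C$ uniformly in $R$ on every compact $K\Subset Y$. Combined with the Harnack inequality for positive $p$-Laplace solutions and the interior $C^{1,\alpha}$-regularity theory (DiBenedetto, Tolksdorf), this gives uniform $C^{1,\alpha}$-bounds and uniform positive lower bounds for the families $\{u^R\},\{v^R\}$ on each such $K$. Along the sequence $R_n\to\zeta$ given by the hypothesis I therefore extract a subsequence with $u^{R_n}\to U$ and $v^{R_n}\to V^*$ in $C^1_{\mathrm{loc}}(Y)$, where $U,V^*$ are positive solutions of the limiting dilated equation $\resc^{\{R_n\}}(Q)=0$ on $Y$. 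A routine analysis of the rescaled non-singular boundary (which passes locally to $\pd\hat Y\setminus\{0,\infty\}$ in the limit) places $U,V^*\in\mathcal{G}_0(Y)$ when $0$ is the assumed regular point, and $\in\mathcal{G}_\infty(Y)$ otherwise; the regularity hypothesis then supplies
\[
\ell:=\lim_{y\to 0}\frac{U(y)}{V^*(y)}\in[0,\infty]
\]
(and analogously at $\infty$).

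To deduce regularity at $\zeta$, I use the factorization
\[
\frac{u(Rx)}{v(Rx)}=\frac{u^R(x)}{v^R(x)}\cdot\frac{u(Rx_0)}{v(Rx_0)}
\]
and argue by contradiction. Suppose $\zeta$ were not regular; then one can produce sequences $\xi_k,\eta_k\to\zeta$ in $X$ with $u(\xi_k)/v(\xi_k)\to a$ and $u(\eta_k)/v(\eta_k)\to b$ for some distinct $a,b\in[0,\infty]$. After a diagonal extraction along which the normalizing ratios $u(R_{n_k}x_0)/v(R_{n_k}x_0)$ converge (in $[0,\infty]$) to some $\tau$ and the rescaled arguments $\xi_k/R_{n_k},\eta_k/R_{n_k}$ approach $0$ in $Y$, both $a$ and $b$ would then appear as $\tau$-multiples of boundary limits of $U(y)/V^*(y)$ at $0$, forcing both to equal the single value $\tau\cdot\ell$ — the desired contradiction.

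The main obstacle is precisely this transfer step. The hypothesis supplies regularity only along the distinguished sequence $\{R_n\}$, but regularity at $\zeta$ concerns every sequence tending to $\zeta$, so the diagonal extraction must arrange that an arbitrary offending pair $(\xi_k,\eta_k)$ is effectively captured by the $R_n$-rescalings. This is where one must exploit both the weak-$*$ relative compactness of the whole family $\{V_R\}_{R\to\zeta}$ in $L^\infty_{\mathrm{loc}}$ (a direct consequence of the uniform Fuchsian bound, as in Proposition~\ref{lem:limitdil}) and the freedom in the choice of reference point $x_0\in Y$, in order to match the scales $R_{n_k}$ to the sizes of $\xi_k$ and $\eta_k$ while still landing the rescaled arguments in a common compact subset of $Y$.
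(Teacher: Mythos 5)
Your overall plan—rescale along $\{R_n\}$, pass to solutions of the limiting dilated equation, and transfer the regularity back—is the paper's strategy. But you have correctly identified, and not resolved, the real gap: the hypothesis grants regularity only for the limiting equation along the distinguished sequence $\{R_n\}$, whereas regularity at $\zeta$ must be tested against arbitrary sequences $\xi_k,\eta_k\to\zeta$. Your proposed diagonal extraction does not close this: to capture $\xi_k,\eta_k$ in a common compact of $Y$ you must rescale by $|\xi_k|$ or $|\eta_k|$, not by $R_{n_k}$, and the weak-$*$ limit of $V_{|\xi_k|}$ need not coincide with the $W$ of the hypothesized limiting equation, so the regularity assumption is not available for it.

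The paper bridges this gap with a separate ingredient you have not used, namely Lemma~\ref{lem:mon}: by the weak comparison principle the quantities
\begin{equation*}
m_r:=\inf_{S_r\cap X'}\frac{u}{v},\qquad M_r:=\sup_{S_r\cap X'}\frac{u}{v}
\end{equation*}
are eventually monotone in $r$ as $r\to\zeta$, so the full limits $m:=\lim_{r\to\zeta}m_r$ and $M:=\lim_{r\to\zeta}M_r$ exist (and are linked by the uniform boundary Harnack inequality, Lemma~\ref{lem:BHI}, so that $M=\infty$ forces $m=\infty$ and $m=0$ forces $M=0$). This monotonicity is exactly what reduces the problem of all sequences to the one sequence $\{R_n\}$: once you know $m_r\to m$ and $M_r\to M$ as genuine limits, the locally uniform convergence of the rescaled solutions gives, for every fixed $R>0$,
\begin{equation*}
\inf_{S_R}\frac{u_\infty}{v_\infty}=\lim_{n\to\infty}m_{RR_n}=m,\qquad
\sup_{S_R}\frac{u_\infty}{v_\infty}=\lim_{n\to\infty}M_{RR_n}=M,
\end{equation*}
independently of $R$. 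Then the regularity of $0$ (or $\infty$) for the limiting dilated equation forces $\lim u_\infty/v_\infty$ to exist, which is compatible with those $R$-independent infima and suprema only if $m=M$. Two small further remarks: (i) the paper normalizes both rescaled functions by the same constant, $u_n=u(R_n\cdot)/u(R_nx_0)$ and $v_n=v(R_n\cdot)/u(R_nx_0)$, so $u_n/v_n=u(R_n\cdot)/v(R_n\cdot)$ directly and no extra normalizing ratio has to be tracked; (ii) the degenerate cases $m=0$ or $M=\infty$ are dispatched before the dilatation step via the uniform Harnack inequality, rather than fed through the compactness argument. Without Lemma~\ref{lem:mon} (or an equivalent device that upgrades subsequential convergence of $m_r,M_r$ to full convergence), your contradiction argument does not go through.
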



The proof of Proposition~\ref{thm:dilatation} appears in Section~\ref{sec:Harnack}. For a slightly stronger result see Proposition~\ref{prop:crit}. Next, we define a class of equations that dilates \eqref{eq:1} to the $p\,$-Laplace equation
after a finite number of iterations.

\begin{definition}\label{def:weak_Fuchs}{\em 
Let $V\in L^\infty_{\mathrm{loc}}(X)$ and $\zeta \in \pd \hat{X}$, where $\zeta=0$ or $\zeta=\infty$.  We say that $V$ has a {\bf weak Fuchsian singularity at $\zeta$}  if inequality \eqref{eq:FuchsCon} is satisfied, and in addition, there exist  $m$ sequences $\{R_n^{(i)}\}_{n=1}^\infty \subset \mathbb{R}_+$, $1\leq i\leq m$,  satisfying $R_n^{(i)} \to \zeta^{(i)}$, where $\zeta^{(1)} = \zeta$, and $\zeta^{(i)} = 0$ or
$\zeta^{(i)} = \infty$
 for $2\leq i \leq m$,  such that
\begin{equation}\label{eq:weakfuchs}
\resc^{R_n^{(m)}} \fundot \dots  \fundot \resc^{R_n^{(1)}}(Q)(w) = -\pl(w) \qquad \mbox{ on } Y,
\end{equation}
where $Y=\lim_{n\to\infty} X/R_n^{(1)}$.
 }
\end{definition}
Note that the potential $V$ considered in Theorem \ref{thm:weakI} (where $|x|^p V(x)$ is continuous near an  isolated point $\zeta$ and  $\lim_{|x| \to \gz} |x|^p V(x) = 0$) has a weak Fuchsian singularity at $\gz$. For further examples of potentials with weak Fuchsian singularities, see Remark~\ref{rem:VinLq}. On the other hand, the following example shows that a potential with a weak Fuchsian singularity might exhibits more complicated behaviors.
\begin{example}\label{ex:weakFuchs}{\em 
Let $R_n \to 0$ be a monotone sequence such that $R_{n+1}/R_{n} \to 0$. For
$n$ large enough put
\begin{equation} \nonumber
V(x) = \left\{
          \begin{array}{lll}
  |x|^{-p} &  \quad R_n \leq |x| < 2 R_n, \\[2mm]
    0      &  \quad  2 R_n \leq |x| < R_{n-1}.
          \end{array}\right.
 \end{equation}
Then $\resc^{R_n} \fundot \resc^{R_n}(Q)(w) = -\laplace_p(w)$ in $\mathbb{R}^d\sm \{0\}$,
however the potential corresponding to $\resc^{R_n}(Q)$ is nonzero in
$B_2\sm B_1$.
 }
\end{example}

The following theorem states that Conjecture~\ref{main_conj} holds true if $\zeta$ is
an isolated point of $\pd \hat{X}$, and  $V$ has a weak Fuchsian singularity at $\zeta$.
\begin{theorem}\label{thm:weak}
Suppose that (\ref{eq:1}) admits a positive solution, and let $\zeta \in \pd \hat{X}$ be an isolated point of $\pd \hat{X}$. Assume that $V$ has a weak Fuchsian singularity at $\zeta$.  Then
\begin{enumerate}
\item[(i)]
$\zeta$ is a regular point of (\ref{eq:1}).
\item[(ii)]
Equation (\ref{eq:1}) admits a unique positive solution
of  minimal growth in a neighborhood of $\bo \hat{X}\sm \{\zeta\}$.
\end{enumerate}
\end{theorem}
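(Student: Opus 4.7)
By Proposition~\ref{thm:RegUni}, part~(ii) is a consequence of part~(i), so the task reduces to showing that $\zeta$ is a regular point of equation~(\ref{eq:1}). The plan is to iterate Proposition~\ref{thm:dilatation} backward along the chain of dilations furnished by the weak Fuchsian hypothesis, using the pure $p\,$-Laplace equation as the base case.

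For the base case, I would verify that the equation $-\pl(w)=0$ on $\mathbb{R}^d\sm\{0\}$ is regular at both $\zeta=0$ and $\zeta=\infty$. For $p\leq d$ this is a direct corollary of Riemann's removable singularity theorem recalled in the introduction: every $u\in\mathcal{G}_0$ either extends across the origin as a bounded $p\,$-harmonic function (in which case $u\sim 1$) or is asymptotic to the fundamental solution $|x|^{(p-d)/(p-1)}$ when $p<d$ (respectively $-\log|x|$ when $p=d$), so any two germs in $\mathcal{G}_0$ are comparable in the $\lsim$ ordering; the analogous dichotomy at $\infty$ follows from the same theorem applied in an exterior domain. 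For $p>d$ the required asymptotic classification of positive $p\,$-harmonic functions near an isolated singularity is precisely the content of Appendix~\ref{sec:asymp}.

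For the backward propagation, set $Q_0:=Q'$ and $Q_k:=\resc^{\{R_n^{(k)}\}}(Q_{k-1})$ for $1\leq k\leq m$, so that by the weak Fuchsian hypothesis $Q_m=-\pl$ on some limiting domain $Y_m$. Proposition~\ref{lem:limitdil} ensures that for every $k\geq 1$ the equation $Q_k=0$ has Fuchsian singularities at both endpoints $0$ and $\infty$ of its domain $Y_k$. Starting from the base-case regularity of $Q_m$ at $\zeta^{(m)}\in\{0,\infty\}$, I apply Proposition~\ref{thm:dilatation} to $Q_{m-1}$ with the sequence $R_n^{(m)}\to\zeta^{(m)}$ to deduce that $\zeta^{(m)}$ is a regular point of $Q_{m-1}$. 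Since $\zeta^{(m)}$ is by hypothesis one of the two endpoints of $Y_{m-1}$, this is exactly the input required for the next step of the descent, where Proposition~\ref{thm:dilatation} is applied to $Q_{m-2}$ with the sequence $R_n^{(m-1)}\to\zeta^{(m-1)}$. Iterating $m$ times yields regularity of $Q_0=Q'$ at $\zeta^{(1)}=\zeta$, which is part~(i); part~(ii) then follows from Proposition~\ref{thm:RegUni}.

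The main obstacle is concentrated in the base case for $p>d$, where the classical Riemann-type dichotomy is not immediately available and the asymptotic behavior of positive $p\,$-harmonic functions near an isolated singularity has to be established by a dedicated barrier-and-comparison argument, deferred to Appendix~\ref{sec:asymp}. By contrast, the backward iteration is essentially bookkeeping once Propositions~\ref{thm:RegUni}, \ref{lem:limitdil} and~\ref{thm:dilatation} are in hand; the only point to verify is that at each stage $\zeta^{(k)}$ remains an isolated point of the boundary of the intermediate domain $Y_{k-1}$, which is automatic because $\zeta$ is isolated in $\pd\hat{X}$ by hypothesis and every subsequent $Y_k$ is a scaling of either $\mathbb{R}^d\sm\{0\}$ or of an exterior domain.
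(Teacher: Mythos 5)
Your proposal is correct and follows essentially the same route as the paper: the base-case regularity of $-\pl(w)=0$ in $\mathbb{R}^d\sm\{0\}$ at $0$ or $\infty$ (Proposition~\ref{thm:cor}, itself resting on Theorem~\ref{thm:asymp} for $p\leq d$ and the Kelvin-type argument of Appendix~\ref{sec:asymp} for $p>d$), followed by a backward induction along the chain of dilations via Proposition~\ref{thm:dilatation}, with Proposition~\ref{lem:limitdil} ensuring the Fuchsian hypothesis persists and Proposition~\ref{thm:RegUni} yielding part~(ii). You have merely made explicit the induction that the paper compresses into a single sentence.
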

The proofs of Theorem~\ref{thm:weak} and of the other cases where Conjecture~\ref{main_conj} holds true are all along the following three main steps:
\begin{enumerate}
\item Rescale the original equation to obtain a limiting dilated equation $\resc^{R_n}(Q)(u)=0$ in $Y$.
\item Prove that either $0$ or $\infty$ is a regular point of the above limiting dilated equation.
\item Conclude that $\zeta$ is a regular point of the original equation and
hence the positive Liouville theorem holds.
\end{enumerate}
In the first step we use the Proposition \ref{lem:limitdil} and uniform Harnack inequalities (see Section \ref{sec:Harnack}). The third step is due to propositions \ref{thm:dilatation} and \ref{thm:RegUni}. The second step is the hardest. In general it requires
the strong comparison principle which is known to hold only in special
cases (see Section \ref{sec:comparison}). The following result is essential for proving step (2) for a weakly Fuchsian isolated singularity. Recall that for a weak Fuchsian isolated singular point the `final' limiting dilated equation is the $p\,$-Laplace equation in the punctured space.
 \begin{proposition}\label{thm:cor}
(i) If $p \leq d$, then $\zeta = 0$ is a regular point of the equation $-\pl(u) =0$ in $\mathbb{R}^d\sm \{0\}$.

(ii) If $p\geq d>1$, then $\zeta = \infty$ is a regular point of the equation $-\pl(u) =0$ in $\mathbb{R}^d$.
\end{proposition}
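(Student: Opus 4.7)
For part (i), I would invoke directly the Kichenassamy--V\'eron classification quoted in the introduction: for $p\le d$, a positive $p\,$-harmonic function $u$ in a punctured neighborhood of $0$ either extends to a positive continuous $p\,$-harmonic function at the origin (removable case), or satisfies
\begin{equation*}
u(x)\;\underset{x\to 0}{\sim}\; c_u\,\Phi_p(x),\qquad \Phi_p(x):=\begin{cases}|x|^{(p-d)/(p-1)}, & p<d,\\ -\log|x|, & p=d,\end{cases}
\end{equation*}
for some $c_u>0$ (singular case). Since here $\pd X=\{0\}$, the boundary-vanishing requirement entering the definition of $\mathcal{G}_0$ is vacuous, so $\mathcal{G}_0$ is precisely the set of such germs. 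Given $u,v\in\mathcal{G}_0$, I would run the four-case analysis (removable/removable, removable/singular, singular/removable, singular/singular): in every case the ratio $u(x)/v(x)$ converges in $[0,\infty]$ as $x\to 0$, which is exactly the $\lsim$-regularity of $\zeta=0$.

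For part (ii), my plan is to reduce to the same pattern by two separate devices depending on the range of $p$. When $p>d$, the appendix supplies the analogue at infinity of the Kichenassamy--V\'eron dichotomy: any positive $p\,$-harmonic function on $B_R^*$ either tends to a positive constant at infinity or is asymptotic to $c|x|^{(p-d)/(p-1)}$ with $c>0$. The identical four-case ratio analysis then yields the regularity of $\zeta=\infty$. When $p=d$, I would exploit the conformal invariance of the $d\,$-Laplacian under M\"obius transformations: the Kelvin inversion $T(y):=y/|y|^2$ sends any positive $d\,$-harmonic function $u$ on $B_R^*$ to a positive $d\,$-harmonic function $\tilde u:=u\circ T$ on $B_{1/R}\sm\{0\}$. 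Consequently $u,v\in\mathcal{G}_\infty$ pull back to $\tilde u,\tilde v\in\mathcal{G}_0$, and applying part (i) to $\tilde u,\tilde v$ produces
\begin{equation*}
\lim_{x\to\infty}\frac{u(x)}{v(x)}\;=\;\lim_{y\to 0}\frac{\tilde u(y)}{\tilde v(y)}\;\in\;[0,\infty].
\end{equation*}

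\textbf{Main obstacle.} All of the genuinely analytic content is imported as a black box: \cite{KV} for (i), the appendix for $p>d$ in (ii), and the conformal invariance of $-\laplace_d$ for $p=d$ in (ii). The step that requires the most care is precisely the $p=d$ reduction at infinity: one must verify that the Kelvin inversion preserves positive $d\,$-harmonicity, that it carries the class $\mathcal{G}_\infty$ bijectively onto $\mathcal{G}_0$, and that limits of ratios transfer correctly under the change of variable $y=T(x)$. Once this is in place, the rest of the argument is bookkeeping built on top of the two dichotomies and the trivial comparability of their four combinations of asymptotic profiles.
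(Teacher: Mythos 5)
Your proposal is correct and takes essentially the same route as the paper. The paper does not write out a stand-alone proof of this proposition; it simply states that the result ``relies on'' Theorem~\ref{thm:asymp} (for $p\le d$ at the origin) and Theorem~\ref{thm:asymp2} (for $p\ge d$ at infinity), and leaves the four-case ratio analysis implicit. Your plan makes that argument explicit: the removable/removable, removable/singular, singular/removable and singular/singular cases each force $\lim u/v\in[0,\infty]$, which is exactly regularity of the singular point in the sense of Definition~\ref{def:regular} (noting, as you do, that $u(0)>0$ in the removable case by the strong minimum/Harnack inequality, and that the boundary-vanishing condition in $\mathcal{G}_\gz$ is vacuous here). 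The only cosmetic deviation is that for $p=d$ in part (ii) you apply conformal invariance of $-\laplace_d$ directly to transfer regularity from $\zeta=0$ to $\zeta=\infty$, whereas the paper applies the same conformal invariance one step earlier, to deduce the $p=d$ case of Theorem~\ref{thm:asymp2} from Theorem~\ref{thm:asymp} and then proceeds uniformly; the two are mathematically equivalent.
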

\begin{remark}\label{rem:reg} {\em 
It follows from Proposition~\ref{thm:dilatation} and Proposition~\ref{thm:cor} that for any $1<p<\infty$ and $d\geq 2$, both $\zeta = 0$ and $\zeta = \infty$ are regular points of the equation $-\pl(u) =0$ in $\mathbb{R}^d\sm \{0\}$ (cf. Remark~\ref{rem:reg1}).
 }
\end{remark}

The proof of Proposition~\ref{thm:cor} relies on the asymptotic behavior of  positive solutions near an isolated singularity. For $p \leq d$ the following (more general) result is known.
\begin{theorem}\label{thm:asymp}
Let $p \leq d$, and let $u$ be a positive solution of $Q'(u) =0$ in the punctured ball
$B_r\sm \{0\}$. Assume that $V\in L^\infty(B_r)$. Then either $u$ has a removable singularity at the origin, or
\begin{equation}\label{asymp:dgeqp}
u(x) \;\underset{x\to 0}{\sim}\;
\left\{
  \begin{array}{ll}
   \quad |x|^\frac{p-d}{p-1} & \hbox{ if }\; p<d,\\[2mm]
    -\log |x| & \hbox{ if }\; p=d.
  \end{array}
\right.
\end{equation}
\end{theorem}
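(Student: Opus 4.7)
The plan is to treat $V|u|^{p-2}u$ as a lower-order perturbation of the pure $p$-Laplace equation and reduce matters to the Kichenassamy--V\'eron removability/asymptotic theorem cited in \cite{KV}. Since $V\in L^\infty(B_r)$, one has in particular $|x|^p V(x)\to 0$ as $x\to 0$, so the dilated potentials $V_R(x):=R^pV(Rx)$ converge to $0$ in the weak-$*$ topology of $L^\infty_{\mathrm{loc}}(\Rn\sm\{0\})$. Thus every limiting dilated equation of $Q'(u)=0$ at $\gz=0$ is simply $-\pl w=0$ on $\Rn\sm\{0\}$, which is the key fact to be exploited.

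The first step would be the dichotomy between a removable singularity and divergence. On annuli $A_\gr:=B_{2\gr}\sm B_{\gr/2}$ the standard interior Harnack inequality applies to $u$ with constants independent of $\gr$, because the rescaled equation on $A_1$ has coefficients bounded uniformly in $\gr$ by the Fuchsian bound $|x|^pV(x)\leq C$. This yields $\sup_{S_\gr}u\asymp\inf_{S_\gr}u$ and hence $u(x)\asymp m(|x|)$ with $m(\gr):=\inf_{S_\gr}u$. If $\limsup_{\gr\to 0}m(\gr)<\infty$, then $u$ is locally bounded near $0$, and Serrin--Trudinger removability for quasilinear equations with bounded coefficients extends $u$ to a solution through the origin, giving the removable alternative. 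Otherwise $m(\gr)\to\infty$.

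In the blow-up case I would pin down the rate via barriers built on the fundamental solution $\Phi(x):=|x|^{(p-d)/(p-1)}$ for $p<d$ and $\Phi(x):=-\log|x|$ for $p=d$. Since $\pl\Phi=0$ on $\Rn\sm\{0\}$ and $|V|\Phi^{p-1}$ is integrable enough near $0$, direct computation (or a Picone-type perturbation) produces radial sub- and super-solutions of $Q'(w)=0$ of the form $c_\pm\Phi$ on a small punctured ball. The weak comparison principle from Section~\ref{sec:comparison}, applied on annuli with the normalization $u/\Phi$ on the outer boundary controlled by a fixed constant and on the inner boundary made arbitrarily favorable by the non-removability, gives the two-sided bound $\Phi\asymp u$ near $0$.

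To upgrade $\asymp$ to $\sim$, I would rescale: set $u_R(x):=u(Rx)/\Phi(R)$ (with a logarithmic additive correction when $p=d$), and extract a subsequential limit $w$ in $C^1_{\mathrm{loc}}(\Rn\sm\{0\})$ using uniform $C^{1,\ga}$ estimates on annuli together with $V_R\to 0$. Then $w$ is a positive solution of $-\pl w=0$ on $\Rn\sm\{0\}$ satisfying $c_-\Phi\le w\le c_+\Phi$ globally, and by Proposition~\ref{thm:cor} combined with the classification in \cite{KV}, such a $w$ is forced to be a constant multiple of $\Phi$. The multiplicative constant is determined by the global two-sided bound and is therefore independent of the extracting subsequence, so $u(x)/\Phi(x)$ has a positive limit as $x\to 0$. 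I would expect the hardest step to be this sharp asymptotic: identifying the unique limit requires exactly the regularity of $\gz=0$ for the limiting $p$-harmonic equation (Proposition~\ref{thm:cor}), and the borderline case $p=d$ is delicate because $\Phi=-\log|x|$ is not scale-invariant, so one must ensure that both the rescaled solution and the rescaled potential pass to the right limit under a normalization that is additive rather than multiplicative in $R$.
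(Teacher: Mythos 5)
The paper does not prove Theorem~\ref{thm:asymp} itself; it is presented as a consequence of results of Serrin \cite{Ser1} and V\'eron (see \cite{ky3,KV}). Your proposal is therefore a genuinely different route: rather than invoke those asymptotic results as a black box, you propose to re-derive them using the dilatation machinery the paper develops for its main theorems. This is a sensible idea and is in the spirit of how the paper handles the weak Fuchsian case (Theorem~\ref{thm:weak}) and the asymptotics near infinity for $p>d$ (Appendix~\ref{sec:asymp}), where V\'eron's scaling method is used explicitly.

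There is, however, a concrete gap in your final step. You write that after blowing up, each subsequential limit $w$ is a constant multiple $c\Phi$ of the fundamental solution, and that ``the multiplicative constant is determined by the global two-sided bound and is therefore independent of the extracting subsequence.'' That inference is not correct: the two-sided bound $c_-\Phi\leq w\leq c_+\Phi$ only places $c$ in the interval $[c_-,c_+]$ and does not select a single value, so \emph{a priori} two different subsequences $R_n\to 0$ could produce two different constants. What actually pins down the constant is the monotonicity of
\begin{equation*}
m_r:=\inf_{S_r}\frac{u}{\Phi},\qquad M_r:=\sup_{S_r}\frac{u}{\Phi},
\end{equation*}
which the paper proves in Lemma~\ref{lem:mon} via the weak comparison principle. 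From that monotonicity one has limits $m:=\lim_{r\to 0}m_r$ and $M:=\lim_{r\to 0}M_r$, and after blow-up along any $\{R_n\}$ one obtains, for every fixed $R>0$,
\begin{equation*}
\inf_{x\in S_R}\frac{w(x)}{|x|^\alpha}=\lim_{n\to\infty}m_{R R_n}=m,
\qquad
\sup_{x\in S_R}\frac{w(x)}{|x|^\alpha}=\lim_{n\to\infty}M_{R R_n}=M,
\end{equation*}
exactly as in the proof of Proposition~\ref{thm:dilatation} and in the appendix's end of proof of Theorem~\ref{thm:asymp2}. Since $w=c\Phi$ forces $m=c=M$, the limit exists with no subsequence dependence. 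Without this monotonicity step your argument only yields sequential limits, not the asserted $\sim$. I would also note that the barrier step (``direct computation or a Picone-type perturbation produces radial sub- and super-solutions $c_\pm\Phi$'') needs care, since $\Phi$ itself is a sub- \emph{or} super-solution of $Q'(w)=0$ depending on the sign of $V$, and the required two-sided comparison typically uses perturbed barriers of the form $\Phi(1\pm\varepsilon|x|^\gamma)$ rather than pure multiples of $\Phi$; this is a known but nontrivial point in Serrin's argument.

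Finally, your caution about the case $p=d$ is well placed: the logarithmic fundamental solution is not homogeneous under dilation, so the blow-up normalization must be chosen additively, and the limiting equation analysis has to be adapted. The paper sidesteps this by using the conformal invariance of the $d$-Laplacian (see the remark after Theorem~\ref{thm:asymp2}); incorporating that device into your scheme would cleanly resolve the borderline case.
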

We note that the proof of Theorem~\ref{thm:asymp} is a consequence of nontrivial asymptotic results by Serrin  \cite{Ser1} and V\'{e}ron (see \cite{ky3}; see also \cite{KV} for stronger results for the $p\,$-Laplace equation).

For the second part of Proposition~\ref{thm:cor} we need the following counterpart of Theorem~\ref{thm:asymp}. This result is of independent
interest and we shall discuss it in greater detail elsewhere.
\begin{theorem}\label{thm:asymp2}
Let $p\geq d>1$, and let $u$ be a positive solution of the equation $-\pl (u) =0$ in a neighborhood of
infinity in $\mathbb{R}^d$.  Then either $u$ has a  positive  limit as $x\to\infty$,
  or
\begin{equation*}
u(x) \;\underset{x\to \infty}{\sim}\;
\left\{
  \begin{array}{ll}
   \quad |x|^\frac{p-d}{p-1} & \hbox{ if }\; p>d,\\[2mm]
    \log |x| & \hbox{ if }\; p=d.
  \end{array}
\right.
\end{equation*}
\end{theorem}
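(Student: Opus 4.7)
My proof would treat the two ranges $p=d$ and $p>d$ separately; in the first case a conformal change of variables reduces the statement to Theorem~\ref{thm:asymp}, while in the second case no such reduction is available and a direct dilatation--compactness argument is needed.

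\textbf{Case $p=d$.} The $d$-Laplacian is conformally invariant, so composition with the Kelvin inversion $\Phi(y)=y/|y|^{2}$ preserves $d$-harmonicity. If $u$ is $d$-harmonic in an exterior domain $B_{R_0}^{*}$, then $\tilde u(y):=u(\Phi(y))$ is a positive $d$-harmonic function in the punctured ball $B_{1/R_0}\setminus\{0\}$. Applying Theorem~\ref{thm:asymp} with $V\equiv 0$ and $p=d$, either the singularity of $\tilde u$ at the origin is removable---in which case the strong maximum principle forces $\tilde u(0)\in(0,\infty)$, translating into a positive finite limit of $u$ at infinity---or $\tilde u(y)\sim -\log|y|$ as $y\to 0$, which after substituting $|y|=1/|x|$ becomes $u(x)\sim \log|x|$ as $x\to\infty$, as claimed.

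\textbf{Case $p>d$.} Fix a direction $e\in S^{d-1}$ and, for $R$ large, set $u_R(x):=u(Rx)/u(Re)$. Each $u_R$ is a positive $p$-harmonic function on an expanding region exhausting $\mathbb{R}^d\setminus\{0\}$. Uniform Harnack on annuli controls $u_R$ above and below on compact subsets of $\mathbb{R}^d\setminus\{0\}$, and the standard $C^{1,\alpha}$-regularity for the $p$-Laplacian then furnishes precompactness in the local uniform topology. Any subsequential limit $u_\infty$ is a positive $p$-harmonic function on $\mathbb{R}^d\setminus\{0\}$ normalized by $u_\infty(e)=1$. The plan is then to show that $u_\infty$ must be radial---hence of the explicit form $u_\infty(x)=a|x|^{(p-d)/(p-1)}+b$ with $a,b\ge 0$ (these being all the positive radial $p$-harmonic functions on the punctured space)---by sandwiching it between this two-parameter family on successive annuli using comparison and the spherical Harnack principle, and then invoking Theorem~\ref{thm:sym} on the resulting radial minorant and majorant. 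Once this radial classification is secured, Proposition~\ref{thm:RegUni} gives uniqueness of the dilatation limit, so reading off $u_\infty(e)=1$ yields either $u$ bounded with a positive limit (case $a=0$) or the asserted asymptotic $u(x)\sim|x|^{(p-d)/(p-1)}$ (case $a>0$).

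The main obstacle is the radial-symmetry identification of $u_\infty$ in the case $p>d$. Without a conformal reduction to an isolated-singularity problem at the origin, and with moving-plane methods being delicate for the $p$-Laplacian when $p\ne 2$, a robust radial classification of dilatation limits in the exterior setting is the genuinely nontrivial step; this is consistent with the authors' remark that Theorem~\ref{thm:asymp2} is of independent interest and will be developed further elsewhere.
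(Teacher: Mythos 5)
Your treatment of the case $p=d$ coincides with the paper's: conformal invariance of the $d$-Laplacian plus Theorem~\ref{thm:asymp}. For $p>d$, however, your sketch diverges from the paper's route and has a genuine gap that you only partially acknowledge. You write that ``no such reduction is available,'' but in fact the paper introduces a \emph{modified} Kelvin transform $K[u](x):=u(x/|x|^{2})$ which, while not preserving $p$-harmonicity, sends positive $p$-harmonic functions near infinity to positive solutions of the weighted equation $-\diver(|x|^{\beta}|\nabla v|^{p-2}\nabla v)=0$, $\beta=2(p-d)$, near the origin (Lemma~\ref{eq:plw}). That reduction is what brings Serrin's classical machinery to bear: \emph{before} any dilatation the paper establishes the a priori two-sided estimate $v_{c}(x)\asymp|x|^{\alpha}$ near the origin, $\alpha=(d-p)/(p-1)$, via the flux identity of Lemma~\ref{thm:la} and explicit variational weighted $p$-capacity computations (Lemma~\ref{thm:la1}). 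Only with that comparison in hand does the blow-down argument close: the comparison function $|x|^{\alpha}$ has no critical points, so Theorem~\ref{thm:SCP} applies along the dilatation limit to force $m=M$. Your plan skips the two-sided a priori bound entirely; the normalized family $u_{R}(x)=u(Rx)/u(Re)$ is indeed bounded on compact annuli by Harnack, but without knowing $u\asymp|x|^{(p-d)/(p-1)}$ (or $u\asymp 1$) beforehand there is no control on the ratio $u_{\infty}(x)/|x|^{(p-d)/(p-1)}$ and no hook for the SCP, and the ``sandwiching by the radial two-parameter family'' step is exactly the missing lemma rather than an available tool.

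There is also a circularity you do not appear to notice. To identify the blow-down limit $u_{\infty}$ you invoke Theorem~\ref{thm:sym} and Proposition~\ref{thm:RegUni}. But for $-\pl(u)=0$ in $\mathbb{R}^{d}\setminus\{0\}$ the limiting dilated potential is identically zero, so the proof of Theorem~\ref{thm:sym} lands in its degenerate branch (``$W$ trivial near $\zeta$'') and there delegates to Theorem~\ref{thm:weak}(i), hence to Proposition~\ref{thm:cor}(ii), which is itself deduced from Theorem~\ref{thm:asymp2}. Likewise, to use Proposition~\ref{thm:RegUni} you need regularity of $\zeta=\infty$ for the $p$-Laplacian, again Proposition~\ref{thm:cor}(ii). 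Both routes presuppose the theorem you are proving. The paper's proof avoids this by never invoking Theorem~\ref{thm:sym}, Theorem~\ref{thm:weak}, Proposition~\ref{thm:cor}, or Proposition~\ref{thm:RegUni} in the Appendix: the two-sided bound is proved from scratch by capacity estimates, and the SCP is applied against the explicit, critical-point-free function $|x|^{\alpha}$.
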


The case $p=d$ in Theorem~\ref{thm:asymp2} follows from Theorem~\ref{thm:asymp} using the conformality of the $d$-Laplacian.
We prove the case $p>d$ of Theorem~\ref{thm:asymp2} in Appendix~\ref{sec:asymp}. The proof
uses a modified Kelvin transform and an argument similar to the one used in the proof of
Theorem~\ref{thm:asymp}.
\begin{remark}\label{rem:reg1} {\em 
Suppose that in a punctured neighborhood of $\gz$  \begin{align} \label{q}
   \qquad |V(x)| \leq \frac{M}{|x|^q} \,,
\end{align}
where  $M >0 $ and $q < p$ if $\zeta=0$ (respectively, $q > p$ if $\zeta=\infty$).

In \cite{MY3} we prove that if $V$ satisfies \eqref{q}, and if $u$ is a positive solution of the equation $-\pl (u) +V|u|^{p-2} u=0$ in a punctured neighborhood of $\gz=0$ and $p>d$ (respectively, $\gz=\infty$ and $p<d$),
then
$$\lim_{x \to \zeta} u(x) = C,$$
where $C$ is a nonnegative constant.
 }
\end{remark}

\vskip 3mm

\begin{proof}[Proof of Theorem~\ref{thm:weak}]
(i) By Proposition~\ref{thm:cor} either $0$ or $\infty$ is a regular point of the equation
$$
\resc^{R_n^{(m)}} \fundot \dots  \fundot \resc^{R_n^{(1)}}(Q)(w)= -\pl(w)=0\qquad \mbox{in }\; \mathbb{R}^d\sm \{0\}.
$$
Therefore, Proposition~\ref{thm:dilatation} and an induction argument imply that $\zeta$ is
a regular point of $Q$.

(ii) The claim follows from part (i) of the theorem and Proposition~\ref{thm:RegUni}.
\end{proof}

We conclude this section with general remarks concerning positive solutions of minimal growth in a neighborhood of infinity (see Definition \ref{def:minimal_gr}).

 \begin{remarks} \label{rem1}{\em 
(1) Suppose that the equation $Q'(u)=0$ admits a  positive solution in $X$. The existence of positive solutions of minimal growth in a neighborhood of infinity in $X$ follows by a simple exhaustion argument of solving Dirichlet problems in annular smooth domains $\{A_n\} \Subset (X\sm K)$ that exhaust $X\sm K$, subject to zero boundary condition on the (`exterior') portion of the boundary of $A_n$ that tends to infinity in $X$ (see, \cite{Agmon,Pin2}).

In particular, for any $x_0\in X$ the equation $Q'(u)=0$ admits a positive solution $u_{x_0}$ of the equation $Q'(u)=0$ in $X\sm \{x_0\}$ of minimal growth in a neighborhood of infinity in $X$ \cite{Agmon,Pin2}. This solution is known to be unique
 if $1<p \leq d$ \cite{Pin2}. The uniqueness for $p>d$ follows from Theorem~\ref{thm:weak} (see Corollary~\ref{cor:dlessp}). The equation $Q'(u)=0$ is {\bf critical} (respectively, {\bf subcritical}) in $X$ if such a solution has a removable (respectively, nonremovable) singularity at $x_0$ \cite{Pin2}.

On the other hand, for any $\zeta\in \pd \hat{X}$ the equation $Q'(u)=0$ admits a positive solution in $X$ of minimal growth in a neighborhood of $\bo \hat{X}\sm \{\zeta\}$; the proof is similar to the proof in Section~5 of \cite{Pin1} using a Martin sequence of the form $\{u_{x_n}(x)/u_{x_n}(x_0)\}$,  where $x_0$ is some fixed reference point in $X$, $\{x_n\}$ is some sequence in $X$ such that $x_n\to\zeta$, and $u_{x_n}$ is the positive solution of the equation $Q'(u)=0$ in $X\sm \{x_n\}$ of minimal growth in a neighborhood of infinity in $X$.

(2) A (global) positive solution of the equation $Q'(u)=0$ in $X$ of minimal growth in a neighborhood of infinity in $X$ is called a {\bf ground state of the equation $Q'(u)=0$ in $X$}. Note that a ground state is a positive solution of minimal growth in a neighborhood of  $\bo \hat{X'}\sm \{x_0\}$,  where $x_0$ is some point in $X$, and $X':=X\sm \{x_0\}$. It follows that \eqref{eq:1} admits a ground state if and only if \eqref{eq:1} is critical in $X$. Moreover, \eqref{eq:1} is critical in $X$ if and only if it admits a unique global positive supersolution \cite{Pin2}. In particular, the positive Liouville theorem holds true in the critical case.

(3) Let $\Gamma$ be a $C^2$-portion of $\bo X\sm \{\zeta\}$, and let $U\subset X$ be a relative neighborhood of $\Gamma$. Assume that $V\in L^\infty_{\mathrm{loc}}(X)\cap L^\infty(X\cap U)$. Then positive solutions of minimal growth vanish continuously on $\Gamma$. Moreover, if $X\subset\mathbb{R}^d$ is an unbounded $C^2$-domain and $V\in L^\infty_{\mathrm{loc}}(\bar{X})$, then any positive solution of \eqref{eq:1} which vanishes continuously on $\bo X$ is a positive solution of  minimal growth in a neighborhood of $\bo \hat{X}\sm \{\infty\}$.
 }
   \end{remarks}

\section{Weak and strong comparison principles and applications}\label{sec:comparison}
In this section we discuss the validity of the weak comparison principle (WCP) and the strong comparison principle (SCP) for the equation $Q'(u)=0$, and their relations to the results of the present paper. Roughly speaking, the validity of the SCP
 implies that Conjecture~\ref{main_conj} holds true (see Theorem~\ref{thm:assume_scp}).
\begin{theorem}[Weak comparison principle \cite{Garcia}]\label{thm:wcp}
Let $\Omega'$ be a bounded $C^{1,\alpha}$ subdomain of a domain $\Omega\subset X$, such that $\Omega'\Subset \Omega$.
Assume that the equation $Q'(w)=0$ admits a positive solution in $\Omega$ and
suppose that $u,v\in C^1(\Omega')\cap C(\overline{\Omega'})$, $u,v\geq 0$ satisfy the following inequalities
\begin{eqnarray}\label{eq:wcp}
      Q'(v) \geq 0  & \mbox{in} & \Omega', \nonumber\\
      Q'(u) \leq 0 & \mbox{in} & \Omega', \\
 u \leq v & \mbox{on} & \bo \Omega' \nonumber.
\end{eqnarray}
Then $u \leq v$ in $\Omega'$.
\end{theorem}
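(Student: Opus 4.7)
The plan is to use the classical subtraction argument for quasilinear operators, with the hypothesis that $Q'(w)=0$ admits a positive solution in $\Omega$ playing the role of controlling the possibly sign-changing potential $V$. First I would choose $\varphi := (u-v)_+$ as the test function; the boundary condition $u \leq v$ on $\bo \Omega'$, together with the $C^{1,\alpha}$-regularity of $\Omega'$ and the continuity of $u,v$ up to $\bo \Omega'$, ensures that $\varphi \in W_0^{1,p}(\Omega')$ extends by zero to a legitimate test function. Testing the weak inequalities for $u$ and $v$ against $\varphi$ and subtracting yields
\begin{equation*}
\int_{\{u>v\}} \langle |\nabla u|^{p-2}\nabla u - |\nabla v|^{p-2}\nabla v,\, \nabla(u-v)\rangle \,\mathrm{d}x + \int_{\{u>v\}} V\bigl(u^{p-1}-v^{p-1}\bigr)(u-v)\,\mathrm{d}x \leq 0.
\end{equation*}

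Next I would invoke the standard vector monotonicity inequalities. By the strict convexity of $\xi \mapsto |\xi|^p$, the gradient integrand is pointwise nonnegative and bounded below by $c_p |\nabla(u-v)|^p$ when $p \geq 2$ (with the degenerate analogue $c_p|\nabla(u-v)|^2/(|\nabla u|+|\nabla v|)^{2-p}$ when $1<p<2$). Since $u, v \geq 0$, the monotonicity of $t \mapsto t^{p-1}$ on $[0,\infty)$ makes $u^{p-1}-v^{p-1} \geq 0$ on $\{u>v\}$. If $V \geq 0$ the argument closes immediately: both integrands vanish a.e., whence $\nabla(u-v)_+ \equiv 0$ on $\Omega'$, and the vanishing boundary trace forces $(u-v)_+ \equiv 0$ in $\Omega'$.

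The main obstacle is the case in which $V$ is sign-changing, so that the potential integral may be negative and must be absorbed into the gradient term. Here I would exploit the standing hypothesis that $Q'(w)=0$ admits a positive solution $\phi$ in $\Omega$: since $\Omega' \Subset \Omega$ and $\phi$ satisfies the Harnack inequality, one has $\phi \geq c_0 > 0$ on $\overline{\Omega'}$, and a Picone-type identity then yields the Hardy--Poincar\'e inequality
\begin{equation*}
\int_{\Omega'} \bigl(|\nabla w|^p + V|w|^p\bigr)\,\mathrm{d}x \geq 0 \qquad \forall\, w \in W_0^{1,p}(\Omega').
\end{equation*}
Combining this with the mean-value estimate $0 \leq u^{p-1}-v^{p-1} \leq (p-1)\max(u,v)^{p-2}(u-v)$ on $\{u>v\}$ (with the corresponding concave bound in the singular range $1<p<2$) allows the negative part of the potential integral to be absorbed by a fixed fraction of the gradient integral. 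One concludes that $\nabla(u-v)_+ \equiv 0$ a.e.\ in $\Omega'$, so $(u-v)_+$ is locally constant on $\{u>v\}$, and the vanishing boundary trace forces $(u-v)_+ \equiv 0$, i.e., $u \leq v$ in $\Omega'$.
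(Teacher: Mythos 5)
The paper does not prove this theorem; it cites it to García-Melián and Sabina de Lis \cite{Garcia}, so the comparison is with the argument in that reference.

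Your argument closes correctly in the two cases you explicitly treat: $V\ge 0$ (both terms nonnegative, forcing $\nabla(u-v)_+\equiv 0$), and, implicitly, $p=2$ (where the potential term produced by the test function $(u-v)_+$ is exactly $\int_{\Omega'} V|(u-v)_+|^2$, so the positivity of the quadratic form on $W_0^{1,2}(\Omega')$ — which is genuinely strict since $Q'$ has a positive solution on the strictly larger $\Omega$ — finishes the job). The gap is in the absorption step for sign-changing $V$ and $p\ne 2$. Testing with $(u-v)_+$ produces the potential term
\begin{equation*}
\int_{\{u>v\}} V\bigl(u^{p-1}-v^{p-1}\bigr)(u-v)\,\mathrm{d}x ,
\end{equation*}
whereas the Hardy--Poincar\'e (Allegretto--Piepenbrink/Picone) inequality you derive from the positive solution only controls $\int V_-\,|(u-v)_+|^p$. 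These are not comparable: on $\{u>v\}$ the pointwise ratio $\bigl(u^{p-1}-v^{p-1}\bigr)(u-v)\big/(u-v)^p$ is of size $\max(u,v)^{p-2}(u-v)^{2-p}$, which blows up as $u\downarrow v>0$ when $p>2$ (and as $v\downarrow 0$ when $p<2$). So the mean-value bound $u^{p-1}-v^{p-1}\le (p-1)\max(u,v)^{p-2}(u-v)$ moves you farther from, not closer to, a quantity the Hardy--Poincar\'e inequality can dominate. There is no ``fixed fraction of the gradient integral'' that absorbs $\int V_-\bigl(u^{p-1}-v^{p-1}\bigr)(u-v)_+$, because the homogeneity of the nonlinear potential term is wrong.

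This is exactly why the proof in \cite{Garcia} does not use $(u-v)_+$. It uses Picone/D\'{\i}az--Saa test functions of the form $\varphi_u=(u^p-v^p)_+/u^{p-1}$ for the subsolution inequality and $\varphi_v=(u^p-v^p)_+/v^{p-1}$ for the supersolution inequality (after a regularization $u\mapsto u+\varepsilon$, $v\mapsto v+\varepsilon$ near the zero set and a limit $\varepsilon\to 0$). With these choices both potential terms equal $\int V\,(u^p-v^p)_+\,\mathrm{d}x$ and cancel identically upon subtraction, regardless of the sign of $V$; what remains of the gradient terms is nonnegative by Picone's identity (the Lagrangian $L(\cdot,\cdot)$ used in Example~\ref{example3}), with equality only when $u$ is proportional to $v$ on $\{u>v\}$. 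The positive-solution hypothesis then enters not through absorption, but in ruling out the proportional case and in justifying the truncation/limit. So the fix is not a sharper estimate within your scheme; it is to change the test function so the potential term disappears exactly rather than needing to be absorbed.
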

Next, we state a conjecture concerning the strong comparison principle (SCP), and discuss some  cases where it holds.
\begin{conjecture}[Strong comparison principle]\label{con:SCP}
Suppose that all the conditions of Theorem~\ref{thm:wcp} are satisfied. Then
$$
\mbox{either} \quad u<v \quad \mbox{in} \quad \Omega', \quad \mbox{or}\quad
u = v \quad \mbox{in} \quad \Omega'.
$$
\end{conjecture}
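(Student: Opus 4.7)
The plan is to reduce the conjecture to the classical strong maximum principle for a linear elliptic inequality obtained by linearizing $Q'$ along the segment joining $u$ and $v$. Set $w := v-u$; Theorem~\ref{thm:wcp} already yields $w \geq 0$ in $\Omega'$, so the content of the conjecture is the implication: if $w$ vanishes at some interior point $x_0 \in \Omega'$, then $w \equiv 0$ on the connected component of $\Omega'$ containing $x_0$.

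The first step is the linearization. Writing
$$|\nabla v|^{p-2}\nabla v - |\nabla u|^{p-2}\nabla u = \int_0^1 \frac{d}{dt}\bigl[|\nabla u_t|^{p-2}\nabla u_t\bigr]\,\mathrm{d}t, \qquad u_t := u + t(v-u),$$
and treating the zero-order term analogously via $f(s)=|s|^{p-2}s$, the difference $Q'(v)-Q'(u) \geq 0$ takes the divergence form
$$-\mathrm{div}\bigl(A(x)\nabla w\bigr) + c(x)\, w \geq 0 \qquad \mbox{ in } \Omega',$$
with
$$A(x) := \int_0^1 |\nabla u_t|^{p-2}\Bigl(I + (p-2)\frac{\nabla u_t \otimes \nabla u_t}{|\nabla u_t|^2}\Bigr)\mathrm{d}t, \qquad c(x) := (p-1)\,V(x)\int_0^1 |u_t|^{p-2}\,\mathrm{d}t.$$
On any open subset $U \Subset \Omega'$ on which $|\nabla u|+|\nabla v|>0$, the matrix $A$ is uniformly elliptic with bounded coefficients (the bounds depending on $\min_{U}(|\nabla u|,|\nabla v|)$ and on $\|u\|_{C^1},\|v\|_{C^1}$), and $c\in L^\infty(U)$ since $u,v \in C(\overline{\Omega'})$ and $V \in L^\infty_{\mathrm{loc}}$. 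Applying the classical strong maximum principle and Hopf boundary lemma to this linear inequality (together with the standard device $w = \mathrm{e}^{\gamma x_1}\tilde w$ to absorb the sign of $c$) forces $w$ to vanish identically on any connected component of $U$ that contains a zero of $w$.

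The hard part is the degenerate set $Z := \{x\in\Omega' : \nabla u(x) = \nabla v(x) = 0\}$, where $A(x)$ collapses (for $p>2$) or blows up (for $p<2$) and the linearized operator ceases to be uniformly elliptic in the classical sense. Here I would invoke Damascelli--Sciunzi type strong comparison technology: Tolksdorf/DiBenedetto regularity gives $u,v\in C^{1,\alpha}_{\mathrm{loc}}$, and provided neither solution is constant on any open set the critical set has empty interior. One then propagates the vanishing of $w$ across $Z$ by a weighted Harnack chain using the weight $|\nabla u_t|^{p-2}$, which lies in a Muckenhoupt $A_2$ class under mild non-degeneracy hypotheses on $\nabla u$, $\nabla v$. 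A connectedness argument then shows that $\{w=0\}$ is both open and closed in $\Omega'$, hence either empty (in which case $u<v$) or all of $\Omega'$ (in which case $u\equiv v$).

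The principal obstacle, and the reason the statement is phrased as a conjecture rather than a theorem, is precisely this propagation of the vanishing of $w$ across $Z$ when $V$ is sign-changing. The available SCP results in the literature succeed only under additional hypotheses such as $V \leq 0$, or $1<p\leq 2$, or a priori control on the size of $Z$; an unconditional proof would appear to require a refined Harnack inequality adapted to the degenerate weight $|\nabla u_t|^{p-2}$ together with a structural analysis of $Z$ along the diagonal $\{u=v\}$.
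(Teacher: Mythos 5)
The statement you were asked to prove is labeled in the paper as Conjecture~\ref{con:SCP}, and the paper offers no proof of it. Indeed, Remarks~\ref{rem:3}(1) explicitly record that for $p\ne 2$ the validity of this strong comparison principle is open even for the $p$-Laplace equation with $V\equiv 0$, and open as well for strictly positive $V$. The closest the paper comes is Theorem~\ref{thm:SCP}, a restricted SCP valid only under two additional structural hypotheses: that the nondegeneracy set $S^c=\Omega'\setminus(S_u\cap S_v)$ is connected, and that $S_u\cap S_v$ does not meet $\pd\Omega'$. Its proof runs exactly along the lines of the first half of your sketch: away from the common critical set, the difference $v-u$ satisfies a uniformly elliptic linear inequality (the paper invokes the tangency principle, Theorem~2.5.2 of Pucci--Serrin, rather than spelling out the integral linearization, but it is the same mechanism), so the coincidence set is open and closed in $S^c$; the two extra hypotheses then let one close the argument by hand on the degenerate complement.

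Where your write-up ceases to be a proof is the paragraph claiming to propagate the vanishing of $w$ across the degenerate set $Z=S_u\cap S_v$ by a ``weighted Harnack chain'' with $|\nabla u_t|^{p-2}$ treated as a Muckenhoupt $A_2$ weight ``under mild non-degeneracy hypotheses.'' Those hypotheses are exactly what is not available in general: $Z$ can have nonempty interior and can disconnect $S^c$, the weight need not be $A_2$ near such a set, and for sign-changing $V$ the zero-order term offers no help. You do say at the end that this is the obstacle and the reason the statement is a conjecture, which is correct and shows you understand the situation; but as written the middle of your argument reads as if the Harnack-chain step could be carried out, when in fact no such result is known and the paper deliberately stops short of asserting it. A faithful answer here is either to reproduce Theorem~\ref{thm:SCP} with its hypotheses (i) and (ii), or to state plainly that no unconditional proof exists and that the linearization-plus-tangency argument establishes the result only on $S^c$, with the behavior across $Z$ left open.
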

\begin{remarks} \label{rem:3}{\em 
(1) By the strong maximum principle, the SCP holds true for a general (nonsymmetric) {\em linear} operators, and in particular, Conjecture~\ref{con:SCP} holds for $p=2$. By the same reason, it also holds for $1<p<\infty$ and $u=0$  \cite{Garcia}.  For other particular cases where the SCP holds true see \cite{Barles,Tak,Damascelli,Sciunzi,deF,Tolks} and the references therein. On the other hand, the validity of Conjecture~\ref{con:SCP} for $p\ne 2$ is an open problem even for the simplest case where $u$ and $v$ are positive $p\,$-harmonic functions, i.e. positive {\em solutions} of the
equation $-\pl (w)=0$ in $\mathbb{R}^d$, and $d>2$ (see e.g. \cite{pNotes}). Conjecture~\ref{con:SCP} for $p\ne 2$ and strictly positive potentials  $V$ is open as well.

(2) By \cite{Garcia}, WCP holds true under slightly weaker assumptions. Namely, WCP holds true if condition (\ref{eq:wcp}) is replaced with
\begin{eqnarray}\label{eq:wcps}
          Q'(v) \geq 0 & \mbox{in} & \Omega', \nonumber\\
      Q'(v) \geq Q'(u) & \mbox{in} & \Omega',  \\
      u \leq v & \mbox{on} & \bo \Omega' \nonumber .
\end{eqnarray}
However, there is a counterexample \cite{Tak} which shows that the corresponding SCP does not hold under condition \eqref{eq:wcps}.
The construction of the counterexample in \cite{Tak} relies on the fact that \eqref{eq:wcps} is not homogeneous under scaling. More precisely, there might exist a constant $C>1$ such that \eqref{eq:wcps} holds true, $Cu\leq v$ on $\bo \Omega'$, but $Q'(v) \geq CQ'(u)$ does not hold.
 }
\end{remarks}
Next, we state a special case where SCP is known to hold (cf. \cite{Damascelli}). To this end, we need to discuss the set $S_u$ of critical points of a given function $u$. More precisely, let $u\in C^1(\Omega')$, then
$$
S_u:=\{x \in \Omega'\mid |\grad u(x)| = 0\}
$$
is called the {\bf set of critical points of $u$}.
For $u, v\in C^1(\Omega')$, we use the notation $S:=S_u \cap S_v$,  and
by $S^c$ we denote the complement of $S$ in $\Omega'$, the {\bf nondegeneracy set}. Note that $S$ is a  closed set \cite{Dib}.
\begin{theorem}\label{thm:SCP}
Suppose that $\Omega,\,\Omega',\,u,\,v$ satisfy all the conditions of Theorem~\ref{thm:wcp}.
Assume further that
\begin{enumerate}
\item[(i)] $ S^c $ is a connected set.
\item[(ii)] $S \cap \bo \Omega'$ is empty.
\end{enumerate}
Then the SCP holds true  with respect to $u$ and $v$.
\end{theorem}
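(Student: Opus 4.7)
The plan is to reduce the SCP to the classical linear strong maximum principle by linearizing $Q'(v)-Q'(u)$ on the nondegeneracy set $S^c$, and then to extend the resulting conclusion to all of $\Omega'$ via the topological assumptions (i)--(ii).

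First, set $w:=v-u$; by Theorem~\ref{thm:wcp}, $w\geq 0$ in $\Omega'$. Writing $\xi\mapsto|\xi|^{p-2}\xi$ as the integral of its differential along the segment from $\nabla u$ to $\nabla v$ yields $|\nabla v|^{p-2}\nabla v-|\nabla u|^{p-2}\nabla u=A(x)\nabla w$, where
$$
A(x)=\int_0^1 |\xi_t|^{p-2}\!\left(I+(p-2)\,\frac{\xi_t\otimes\xi_t}{|\xi_t|^2}\right)\mathrm{d}t,\qquad \xi_t:=(1-t)\nabla u+t\nabla v,
$$
and a parallel calculation gives $V(|v|^{p-2}v-|u|^{p-2}u)=B(x)\,w$ for a bounded $B$. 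Subtracting the differential inequalities $Q'(u)\leq 0\leq Q'(v)$ produces, weakly in $\Omega'$,
$$
-\diver(A(x)\nabla w)+B(x)w\geq 0.
$$
On any compact $K\Subset S^c$, at least one of $|\nabla u|,|\nabla v|$ is bounded below, which forces $|\xi_t|$ to be bounded below on a subinterval of $[0,1]$ of uniformly positive length; since $p>1$, the eigenvalues of $A(x)$ are uniformly bounded above and away from zero on $K$. Thus the displayed inequality is uniformly linear elliptic with bounded coefficients on $K$, and the classical strong maximum principle applies.

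Consequently $\{w=0\}\cap S^c$ is open in $S^c$ (by the SMP) and closed in $S^c$ (by continuity). Hypothesis~(i) forces it to be either empty or all of $S^c$, yielding the dichotomy \emph{either} $w>0$ on $S^c$ \emph{or} $w\equiv 0$ on $S^c$. To propagate this dichotomy to $\Omega'$, note that on each connected component $C$ of $\operatorname{int}(S)$ both $\nabla u\equiv 0$ and $\nabla v\equiv 0$, so $u$ and $v$ are constants on $C$, and continuity at $\partial C\subset \overline{S^c}\cap\Omega'$ matches these constants with limits of $w$ taken from $S^c$. Hypothesis~(ii) guarantees that $S$ is compactly contained in $\Omega'$ and hence well-separated from $\partial\Omega'$. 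In the case $w\equiv 0$ on $S^c$, the two constants agree on every $C$, giving $u\equiv v$ in $\Omega'$; in the case $w>0$ on $S^c$, the constants coincide with strictly positive limits from $S^c$, and a Hopf-type argument for the linearized equation near $\partial S\cap\Omega'$ rules out interior zeros of $w$ on $\partial S$, yielding $u<v$ in $\Omega'$.

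The main obstacle is this last extension step across the degenerate set $S$: the linearization breaks down there and the linear SMP does not act directly on $S$. Hypothesis~(ii) is exactly what makes the extension tractable, since by keeping $S$ strictly inside $\Omega'$ every candidate zero of $w$ on $\partial S$ is accessible from the connected set $S^c$, reducing the argument to continuity and a boundary-Hopf comparison rather than a degenerate-elliptic maximum principle; hypothesis~(i) is what promotes the pointwise SMP into the global dichotomy on $S^c$ rather than a componentwise one.
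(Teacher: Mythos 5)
Your overall strategy (linearize, apply the linear strong maximum principle on $S^c$, exploit connectedness of $S^c$, then propagate across $S$) parallels the paper's, which uses the Pucci--Serrin tangency principle in place of your explicit interpolation matrix $A(x)$ to show that the coincidence set $E:=\{u=v\}$ is open in $S^c$; from either route one gets the dichotomy that $E\cap S^c$ is empty or all of $S^c$, and your treatment of the case $w\equiv 0$ on $S^c$ is fine.

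The genuine gap is in the complementary case $E\cap S^c=\emptyset$, i.e.\ $w:=v-u>0$ on $S^c$. You assert that a ``Hopf-type argument for the linearized equation near $\partial S\cap\Omega'$ rules out interior zeros of $w$ on $\partial S$.'' This does not go through: any $x_0\in\partial S$ lies in $S$, so $\nabla u(x_0)=\nabla v(x_0)=0$ and your coefficient matrix $A(x)$ degenerates as $x\to x_0$ (eigenvalues tend to $0$ if $p>2$, blow up if $1<p<2$); hence the linearized operator is not uniformly elliptic on any punctured neighborhood of $x_0$, and Hopf's boundary point lemma is not available there. Likewise, the ``strictly positive limits from $S^c$'' you invoke need not be positive: $w$ is continuous and positive on $S^c$, but may decay to $0$ along sequences approaching $\partial S$, so the constant value of $w$ on a component of the interior of $S$ could a priori be $0$. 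The paper resolves this case by a global argument that bypasses the degenerate set: by hypothesis (ii) there is a neighborhood of $\partial\Omega'$ in $\Omega'$ lying in $S^c$, where $u<v$; by compactness one finds $0<C<1$ with $u\le Cv$ on a slightly shrunken boundary; since $Q'$ is homogeneous of degree $p-1$, $Cv$ is again a supersolution, and the weak comparison principle (Theorem~\ref{thm:wcp}) gives $u\le Cv<v$ throughout, hence $u<v$ on all of $\Omega'$ including $S$. That homogeneity trick is the missing ingredient in your proof.
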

\begin{proof}
Let $E\subset \Omega'$ be a set where $u(x) = v(x)$. If $E \cap S^c$ is empty, then by (ii)
there exists a $0<C<1$ such that $u\leq Cv<v$  on the boundary of $\Omega'$
and by the WCP $u\leq Cv<v$ in $\Omega'$.

So, let us assume that $E \cap S^c$ is not empty and choose $x \in E \cap S^c$.
Without loss of generality we may assume that $|\grad v(x)| > 0$,  and let
$B_r(x)\subset \Omega'$ be  a ball such that $B_r(x)\cap S_v=\emptyset$. Then
$$
A_{ij} := |\grad v|^{p-4} \left((p-2) \pd_i v \pd_j v + |\grad v|^2 \delta_{ij}\right)
$$
is a positive definite matrix in $B_r(x)$. Indeed, this is trivial for $p \geq 2$ and
follows from the inequality
$$
\sum_{i,\,j} (p-2) \pd_i v \pd_j v \xi_i \xi_j \geq - |\grad v|^2 |\xi|^2 \qquad
\forall \xi \in \mathbb{R}^d,
$$
for $1<p<2$. With this condition
the tangency principle (see Theorem 2.5.2 in \cite{Ser})
implies that $B_r(x) \subset E$ (the proof relies on the strong maximum principle for the function $u-v$ that satisfies a certain elliptic linear equation near the nondegenerate point $x$).
Thus, $E$ is a nonempty open set in $S^c$. Since at the same time it is  a closed set in $S^c$, the assumption that $S^c$ is connected implies $\overline{S^c} \subset E$.
On the other hand, $X \sm \overline{S^c}$ is an open set where $\grad u = \grad v =0$.
Consequently, the claim follows since $u$ and $v$ are constant on any connected component
of this set.
\end{proof}
In Section~\ref{sec:proofs} we use Theorem~\ref{thm:SCP} to prove the first part of Theorem~\ref{thm:sym} concerning regular points of spherically symmetric equations (cf. \cite{Sciunzi}).

The main obstacle in the proof of Conjecture~\ref{main_conj} in the quasilinear case is that the SCP
 is not known to hold for such equations without some artificial
assumptions on the critical set. In fact, our result concerning the spherically symmetric case (Theorem~\ref{thm:sym}) can be viewed as a variation of the following general result.
\begin{theorem}\label{thm:assume_scp}
Assume that \eqref{eq:1} admits a positive solution, and suppose that $V$ has a Fuchsian singularity at an isolated point  $\zeta\in \pd \hat{X}$. Assume further that
there is a sequence $R_n \to \zeta$ such that SCP holds true with respect to any two positive global solutions of the
limiting dilated equation
\begin{equation}\label{eq:lde}
\resc^{\{R_n\}}(Q)(w) = 0 \qquad \mbox{in }\, Y.
\end{equation}
Then
\begin{enumerate}
\item[(i)] $\zeta$ is a regular point of the Eq.~\eqref{eq:1}.
\item[(ii)] Equation \eqref{eq:1} admits a unique positive solution
of minimal growth in a neighborhood of $\bo \hat{X} \sm \{\zeta\}$.
\end{enumerate}
\end{theorem}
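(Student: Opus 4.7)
Assertion (ii) is an immediate consequence of (i) combined with Proposition~\ref{thm:RegUni}, so the real task is to prove (i). By Proposition~\ref{thm:dilatation}, this reduces to showing that $0$ (or $\infty$) is a regular point of the limiting dilated equation $\resc^{\{R_n\}}(Q)(w) = 0$ on $Y = \mathbb{R}^d \setminus \{0\}$; the specific shape of $Y$ here uses the assumption that $\zeta$ is an isolated point of $\pd \hat X$.

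The plan is to take two germs $u, v \in \mathcal{G}_\zeta$ of positive solutions of \eqref{eq:1} near $\zeta$ and pass to their rescalings along the distinguished sequence $R_n$, in order to manufacture \emph{global} positive solutions of the limit equation on $Y$, to which the SCP hypothesis can be applied. Fix a reference direction $e \in S_1 \cap Y$, and consider
\[
u_n(x) := \frac{u(R_n x)}{u(R_n e)}, \qquad v_n(x) := \frac{v(R_n x)}{v(R_n e)},
\]
which solve the rescaled equations $Q'_{V_{R_n}}(u_n) = Q'_{V_{R_n}}(v_n) = 0$ on $X/R_n$. The uniform Harnack estimates developed in Section~\ref{sec:Harnack}, together with standard $C^{1,\alpha}_{\mathrm{loc}}$ regularity for quasilinear equations, provide local precompactness of $\{u_n\}$ and $\{v_n\}$. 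Passing to a subsequence yields locally uniform limits $\tilde U, \tilde V$ on $Y$ solving the limit equation $\resc^{\{R_n\}}(Q)=0$ globally, with $\tilde U(e) = \tilde V(e) = 1$ and $\tilde U/\tilde V$ bounded above and below on $Y$ by Harnack.

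Now the SCP hypothesis enters. Set $c := \sup_Y \tilde U/\tilde V \geq 1$. Assuming first that $c$ is attained at some interior point $x_0 \in Y$, pick a smooth bounded $\Omega' \Subset Y$ containing $x_0$; then $\tilde U \leq c\tilde V$ on $\pd \Omega'$, with equality at $x_0$. Applying SCP to $\tilde U$ and $c\tilde V$ (which is again a solution by the $(p-1)$-homogeneity of $Q'$) forces $\tilde U \equiv c\tilde V$ on $\Omega'$, and an open--closed continuation extends this to all of $Y$. The normalization at $e$ then gives $c = 1$, and the symmetric argument with $\inf_Y \tilde U/\tilde V$ yields $\tilde U \equiv \tilde V$. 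Translating back through the dilatation, together with uniform Harnack on spherical shells centered at $\zeta$, this says that every subsequential limit of $u_n/v_n$ is identically $1$, so that $u/v$ tends to a unique limit as $x\to\zeta$, which is exactly regularity of $\zeta$ for the limit equation, hence for $Q'(u)=0$.

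The main obstacle is the scenario in which $\sup_Y \tilde U/\tilde V$ is not attained in the interior of $Y$ but only approached along a sequence escaping to $0$ or to $\infty$; SCP is a pointwise differential inequality and cannot be invoked directly in that case. I would handle this by a second round of dilatation: the limit equation is again Fuchsian at both $0$ and $\infty$ by Proposition~\ref{lem:limitdil}, so rescaling $\tilde U, \tilde V$ along the offending sequence and extracting a further Harnack limit brings the extremum back to the interior of a doubly-dilated model on $Y$; a diagonal refinement of $R_n$ lets one arrange that this doubly-dilated equation is again of the form $\resc^{\{R_n'\}}(Q)$ covered by the SCP hypothesis, so that the interior-attained SCP argument above applies. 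Verifying that this diagonal refinement is compatible with the normalizations at $e$ and with the original Fuchsian bound on $V$ is where the argument demands the most care.
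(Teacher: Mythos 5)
Your reduction of (ii) to (i) via Proposition~\ref{thm:RegUni} and the opening dilatation step are correct, and your overall strategy (pass to the limit along $R_n$, then apply SCP to the limiting solutions) matches the paper's. However, the ``main obstacle'' you identify --- the possibility that $\sup_Y \tilde U/\tilde V$ is not attained in the interior of $Y$ --- is in fact not an obstacle at all, and the second round of dilatation you propose to handle it is both unnecessary and not clearly admissible under the hypotheses.

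The key observation you are missing is the one already established inside the proof of Proposition~\ref{thm:dilatation}: once $m_r$ and $M_r$ are known to have limits $m$ and $M$ as $r\to\zeta$ (by Lemma~\ref{lem:mon}), the local uniform convergence $u_n/v_n \to u_\infty/v_\infty$ yields, \emph{for every fixed} $R>0$,
\[
\sup_{x\in S_R}\frac{u_\infty(x)}{v_\infty(x)}
=\lim_{n\to\infty}\sup_{x\in S_R}\frac{u_n(x)}{v_n(x)}
=\lim_{n\to\infty} M_{RR_n}=M,
\]
and similarly the infimum on every $S_R$ equals $m$. Since $\zeta$ is isolated, $Y=\mathbb{R}^d\setminus\{0\}$, each sphere $S_R$ is a compact subset of the interior of $Y$, and the continuous function $u_\infty/v_\infty$ attains the value $M$ \emph{on each sphere}. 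Thus $u_\infty$ and $Mv_\infty$ touch at interior points of $Y$, and SCP applied directly to this pair (and then to $mv_\infty$, $u_\infty$) forces $u_\infty\equiv Mv_\infty\equiv mv_\infty$, hence $m=M$, without any further dilatation. Your proposed fix also has a logical problem: the SCP hypothesis is stated only for the limiting dilated equation along the given sequence $\{R_n\}$, not for arbitrary further dilations of it, so the ``diagonal refinement'' would require a strengthening of the hypothesis that the theorem does not provide (and, as the paper's proof shows, is not needed). Note finally that your different normalization (dividing $u_n$ by $u(R_ne)$ and $v_n$ by $v(R_ne)$, rather than dividing both by $u(R_nx_0)$ as in~\eqref{eq:unvn}) multiplies the limit ratio by a constant $\kappa\in[m,M]$ along a further subsequence, but does not affect the attainment argument; still, the paper's normalization is cleaner and keeps $m$, $M$ on the nose.
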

We prove Theorem~\ref{thm:assume_scp} in Section~\ref {sec:proofs}.
The extension of Theorem~\ref{thm:assume_scp} to the case where $\zeta$ is a nonisolated singular point is studied in Theorem~\ref{thm:assume_scpB}.

\section{Uniform Harnack inequality and behavior near regular points}\label{sec:Harnack}
One of our main tools in the present paper is dilatation. Let $u$ be a positive solution of (\ref{eq:1}). Then for any $R>0$ the function  $u_R(x) := u(Rx)$
is a solution of the equation
\begin{equation}
Q'_R(u_R):= -\pl (u_R) + V_R(x) |u_R|^{p-2}u_R = 0 \qquad \mbox{in} \quad X/R,
\end{equation}
where $V_R(x) := R^p V(Rx)$ is the scaled potential.

Let $u,\,v$ be two positive solutions of the equation $Q'(u)=0$ in some relative neighborhood $X'\subset X\subset \hat{X}$ of $\zeta$, where $\zeta= 0$ (respectively, $\zeta = \infty$). Assume that $u$ and $v$ vanish continuously on $(\bo X'\cap \bo X) \sm \{\zeta\}$. So, $u,v\in \mathcal{G}_\gz$.
Let $A_r$ be the annular set $A_r:=(B_r \sm \overline{B_{r/2}})\cap X'$, and denote $$
\mathbf{a}_r:= \inf_{x\in A_r}\frac{u(x)}{v(x)}, \qquad  \mathbf{A}_r:=\sup_{x\in A_r} \frac{u(x)}{v(x)}.
$$
Then by the local Harnack inequality \cite{Ser} and the boundary Harnack inequality \cite{Ver,Lewis} there exists $r_0>0$ such that $0<\mathbf{a}_r \leq \mathbf{A}_r <\infty$ for all $0<r<r_0$ (respectively, $r>r_0$). For a Fuchsian type singularity we have
\begin{lemma}[Uniform Harnack inequality]\label{lem:BHI}
 Assume that the potential $V$ has a Fuchsian type singularity at $\zeta$, where $\zeta\in \{0,\infty\}$.
Then there exists $C>0$ such that  the following uniform boundary Harnack inequality holds
$$
\mathbf{A}_r \leq C  \mathbf{a}_r
$$
for any $u,v\in \mathcal{G}_\gz$ and $r$ near $\gz$.
\end{lemma}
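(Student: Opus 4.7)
The plan is to reduce the claim to a uniform statement on a \emph{fixed} annulus by the dilatation already introduced in Section~\ref{sec2}, and then to invoke interior and boundary Harnack inequalities in a way that is uniform in the solutions and in the rescaling parameter. I shall treat the case $\zeta=0$ in detail; the case $\zeta=\infty$ is identical after the change of variable $x\mapsto x/|x|^2$ composed with a Kelvin-type rescaling, or simply by running the argument at large $r$.

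Fix $r>0$ small and set $\tilde u(y):=u(ry)$, $\tilde v(y):=v(ry)$ for $y\in X'/r$. Then $\tilde u,\tilde v$ are positive solutions of $-\Delta_p w+V_r|w|^{p-2}w=0$ in $X'/r$, vanishing continuously on $(\bo(X'/r)\cap \bo(X/r))\sm\{0\}$, where $V_r(y)=r^pV(ry)$. The Fuchsian bound \eqref{eq:FuchsCon} applied at the point $ry$ gives, for every $y$ with $|y|\ge 1/4$,
\[
|V_r(y)|=\frac{1}{|y|^p}\,|ry|^p|V(ry)|\;\le\;\frac{C}{|y|^p}\;\le\;4^p C,
\]
so the rescaled potentials form a family bounded in $L^\infty$ on the fixed annulus $\mathcal A:=B_2\sm \overline{B_{1/4}}$, independently of $r$ and of the solutions. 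Under the dilation the ratio is preserved: $\mathbf A_r=\sup_{\mathcal A\cap(X'/r)}\tilde u/\tilde v$ and $\mathbf a_r=\inf_{\mathcal A\cap(X'/r)}\tilde u/\tilde v$, where we restrict the sup/inf to the (rescaled) annular slab $A_r/r\subset \mathcal A$. Thus it suffices to prove that $\sup(\tilde u/\tilde v)\le C\inf(\tilde u/\tilde v)$ on $A_r/r$ with a constant independent of $r$, $u$, $v$.

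Now I would cover $\overline{A_r/r}$ by finitely many balls (the number being independent of $r$) and run a chaining argument. For balls $B_\rho(y_0)\Subset X'/r$ that are separated from $\bo(X/r)$ by a definite distance, the interior Harnack inequality of Serrin--Trudinger (valid with constant depending only on $p$, $d$, and $\|V_r\|_{L^\infty}$) applied to $\tilde u$ and to $\tilde v$ gives
\[
\sup_{B_{\rho/2}(y_0)}\tilde u\le K\inf_{B_{\rho/2}(y_0)}\tilde u,\qquad
\sup_{B_{\rho/2}(y_0)}\tilde v\le K\inf_{B_{\rho/2}(y_0)}\tilde v,
\]
so the ratio $\tilde u/\tilde v$ oscillates by a factor at most $K^2$ on $B_{\rho/2}(y_0)$. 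For balls meeting $\bo(X/r)$ (only occurring in the non-isolated case, since for an isolated $\zeta=0$ the set $X'/r$ contains all of $\mathcal A$ for $r$ small enough) I would use the boundary Harnack inequality of Lewis--Nystr\"om and the Carleson-type estimate of V\'eron at a $C^2$-boundary point. The crucial point here is that the $C^2$-portion of $\bo X$ near $0$, after the dilation $x\mapsto x/r$, converges in $C^2$ on any fixed compact set to the tangent half-space at $0$; in particular the $C^2$-norms of the local boundary defining function are uniformly bounded, and hence the boundary Harnack constant can be chosen uniformly in $r$.

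The single hard step is this uniformity of the boundary Harnack constant as $r\to 0$: interior Harnack is well-documented for our equation with $L^\infty$ potential, but boundary Harnack for quasilinear operators is more delicate, and one must check that the quantitative constants depend only on $d$, $p$, $\|V_r\|_{L^\infty}$, and on a uniform bound for the $C^2$-character of the rescaled boundary. Granted this, chaining the interior and boundary estimates along a connected cover of $A_r/r$ by finitely many such balls yields $\mathbf A_r\le C\mathbf a_r$ with $C$ independent of $u,v\in\mathcal G_\zeta$ and of $r$ near $\zeta$, as required.
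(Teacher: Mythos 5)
Your argument follows essentially the same route as the paper's: dilate by $r$ so that the Fuchsian bound gives a uniform $L^\infty$ bound on the scaled potential $V_r$ on a fixed annulus, and then invoke Harnack-type inequalities there with constants depending only on $p$, $d$, $\|V_r\|_\infty$, and the geometry, all of which are uniform in $r$. The only difference is one of implementation: you propose an explicit ball-chaining argument combining interior Harnack for $\tilde u$ and $\tilde v$ separately with boundary Harnack near $\bo(X/r)$, whereas the paper applies the local Harnack inequality (isolated case) or the boundary Harnack inequality of Bidaut-V\'eron--Borghol--V\'eron and Lewis--Lundstr\"om--Nystr\"om (nonisolated case) directly in the fixed rescaled domain $\tilde A$, which already yields the two-sided comparison of the ratio without chaining; for $\zeta=0$ on a $C^2$-boundary portion the paper simply cites those boundary Harnack results outright, relying on their scale-invariant form rather than re-deriving uniformity of the constant under dilation as you do. Your identification of the uniformity of the boundary Harnack constant as the delicate point is exactly right and is the content hidden behind the paper's citation; note, however, that chaining \emph{interior} Harnack for $\tilde u$ and $\tilde v$ individually cannot by itself control the ratio up to the boundary where both solutions vanish, so the boundary Harnack estimate is genuinely indispensable there, not merely a convenience.
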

\begin{proof}
For $r>0$, denote
$$u_r(x):=u(rx),\quad v_r(x):=v(rx) \qquad x\in X'/r.$$
Also, for $r>0$ consider the annular set $\tilde{A}_r: = (B_{2r} \sm \overline{B_{r/4}})\cap X'$.
Note that if $\zeta=0$ (respectively, $\zeta=\infty$) is an isolated singular point, then for $r<r_0$ (respectively, $r>r_0$) $A_r/r$ and $\tilde{A}_r/r$ are fixed annuli $A$ and  $\tilde{A}$ satisfying $A \Subset \tilde{A}$. Similarly, if $\zeta = \infty$ is not an isolated singular point, then without loss of generality, $X'$ is a truncated cone. Hence for $r$ large enough, the domains $A_r/r$ and $\tilde{A}_r/r$ are fixed $C^2$ domains $A$ and $\tilde{A}$, respectively,  satisfying $A \subset \tilde{A}$.

It follows that for such $r$ the functions $u_r$ and $v_r$ are positive solutions of the equations $Q'_r(w)=0$ in $\tilde{A}$ (which, in the case where $\zeta = \infty$ is not an isolated singular point, vanish continuously on $\pd X'/r\cap \pd \tilde{A}$). Since $V$ has a Fuchsian type singularity at $\zeta$, it follows that the scaled potentials $V_r$ are uniformly bounded in $\tilde{A}$. Therefore,
either the local Harnack inequality \cite{Ser} or the boundary Harnack inequality  \cite{Ver,Lewis} in $\tilde{A}$ implies that
$$
\mathbf{A}_r =\sup_{x\in A_r} \frac{u(x)}{v(x)} =\sup_{x\in A} \frac{u_r(x)}{v_r(x)} \leq C \inf_{x\in A} \frac{u_r(x)}{v_r(x)} = C \inf_{x\in A_r} \frac{u(x)}{v(x)} = C  \mathbf{a}_r,
$$
where $C$ is $r$ independent.

If $\zeta = 0$  belongs to a $C^2$-portion of $\pd X$, then the result follows directly from the boundary Harnack inequalities of \cite{Ver,Lewis}.
  \end{proof}
The following lemma gives additional information on the behavior of the quotient of two positive solutions near the singular point $\zeta$.
\begin{lemma}\label{lem:mon}
Let $u,v\in \mathcal{G}_\gz$ defined in  a relative neighborhood $X'$   of $\zeta$, where $\zeta\in \{0, \infty\}$). For $r>0$ denote
$$
m_r:=\inf_{S_r \cap X'} \frac{u(x)}{v(x)}\,,\qquad M_r:=\sup_{S_r \cap X'} \frac{u(x)}{v(x)}\,.
$$

(i) The functions $m_r$ and $M_r$ are monotone as $r\to \zeta$, In particular, there are numbers $0\leq m\leq M \leq \infty$ such that
\begin{equation}\label{eq:mM}
\lim_{r\to \zeta} m_r = m, \qquad \lim_{r\to \zeta} M_r = M.
\end{equation}

(ii) Suppose further that $u$ and $v$ are positive solutions of the equation $Q'(u)=0$ in $X$
of minimal growth in $\bo X\sm \{\zeta\}$, then
$0< m\leq M < \infty$ and $m_r\searrow  m, \,M_r\nearrow  M$ as $r\to \zeta$.
\end{lemma}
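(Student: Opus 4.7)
The plan is to treat parts (i) and (ii) separately, using the weak comparison principle for (i) and the defining property of minimal growth together with the uniform Harnack inequality for (ii).

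For part (i), the key observation is the three-sphere / quasi-convexity inequality
\begin{equation*}
M_{r_2}\leq \max(M_{r_1},M_{r_3}),\qquad m_{r_2}\geq \min(m_{r_1},m_{r_3}),
\end{equation*}
valid for $r_1<r_2<r_3$ in the range where $u,v$ are defined. To establish the first inequality, set $\lambda:=\max(M_{r_1},M_{r_3})$ and note that $\lambda v$ is again a positive solution since $Q'(\lambda v)=\lambda^{p-1}Q'(v)=0$. Consider the annular region $A:=(B_{r_3}\sm \overline{B_{r_1}})\cap X'$: on $(S_{r_1}\cup S_{r_3})\cap X'$ one has $u\leq \lambda v$ by the choice of $\lambda$, while on $\pd A\cap \pd X$ both $u$ and $\lambda v$ vanish continuously since $u,v\in \mathcal{G}_\gz$. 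Hence $u\leq \lambda v$ on $\pd A$, and the WCP (Theorem~\ref{thm:wcp}) yields $u\leq \lambda v$ throughout $A$; restricting to $S_{r_2}\cap X'$ gives the claim. The inequality for $m_r$ is obtained by swapping the roles of $u$ and $v$. A quasi-convex (resp.\ quasi-concave) function on an interval is monotone in a neighborhood of each endpoint, so $M_r$ and $m_r$ are monotone as $r\to \zeta$ and the limits $m,M\in[0,\infty]$ exist.

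For part (ii), I exploit the minimal growth condition directly; I describe the case $\zeta=0$, the case $\zeta=\infty$ being symmetric. Fix small $r>0$ and set $K':=B_r$, $\Gamma:=\pd K'\cap X$. By definition $u\leq M_r v$ on $\Gamma$, and $M_r v$ is a positive supersolution of $Q'(w)=0$ in $X\sm K'$, so Definition~\ref{def:minimal_gr}(2) applied to $u$ yields $u\leq M_r v$ throughout $X\sm K'$. Evaluating on $S_{r'}\cap X$ for $r'>r$ gives $M_{r'}\leq M_r$, i.e.\ $M_r$ is non-increasing in $r$, so $M_r\nearrow M$ as $r\to 0$. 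The symmetric argument applied to $v$ using $v\leq m_r^{-1}u$ on $\Gamma$ gives $m_r\searrow m$.

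To prove $0<m\leq M<\infty$, I combine the monotonicity just established with the uniform boundary Harnack inequality (Lemma~\ref{lem:BHI}) applied on a slightly enlarged annulus containing $S_r$, which yields $M_r\leq C m_r$ with $C$ independent of $r$ near $\zeta$. Choosing a reference radius $r_0$ close to $\zeta$, the monotonicity gives $m_r\leq m_{r_0}$ for $r$ closer to $\zeta$ than $r_0$, hence $M_r\leq C m_{r_0}<\infty$ and therefore $M<\infty$; symmetrically, $m_r\geq M_r/C\geq M_{r_0}/C>0$, so $m>0$. The principal technical obstacle is the applicability of the WCP on $A$ when $\zeta$ lies on a $C^2$-portion of $\pd X$, where $S_{r_i}\cap \pd X$ may create corners in $\pd A$ outside the strict hypotheses of Theorem~\ref{thm:wcp}; I would handle this by exhausting $A$ by smooth subdomains $A_\vge\Subset A$, using the continuous vanishing of $u,v$ on $\pd X\cap \pd A$ to absorb arbitrarily small boundary errors on $\pd A_\vge$, applying WCP on each $A_\vge$, and passing to the limit $\vge\to 0$.
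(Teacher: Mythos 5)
Your proposal is correct and follows essentially the same route as the paper: for (i) the paper also derives the quasi-convexity of $M_r$ (quasi-concavity of $m_r$) from the weak comparison principle on annular regions, observing that these sequences are "finally monotone," and for (ii) it likewise uses the minimal-growth definition to obtain monotonicity and then Lemma~\ref{lem:BHI} to get $0<m\leq M<\infty$. The only difference is that you spell out a few details the paper leaves implicit — in particular, the homogeneity $Q'(\lambda v)=\lambda^{p-1}Q'(v)$ justifying that $\lambda v$ is a comparison supersolution, and the corner issue on $\pd A$ in the nonisolated case, which the paper does not address explicitly.
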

\begin{proof} Assume first that $\zeta = 0$.

(i) Suppose that $B_{R_0} \cap X\subset X'$. Let $\{r_n\}_{n=0}^\infty$ be a strictly decreasing sequence such that $r_n\leq R_0$, and $\lim_{n\to \infty} r_n=0$.
Denote also $m_n=m_{r_n}$ and $M_n=M_{r_n}$.
By Harnack, $m_{n}$ and $M_n$ are positive.

By the weak comparison principle, for any $n\geq 1$ and $1\leq i\leq j$ we have
\begin{equation}\label{eq:conv}
 \min(m_{n},m_{n+j})\leq m_{n+i}, \qquad \max(M_{n},M_{n+j})\geq M_{n+i}.
\end{equation}
So, $\{m_n\}$ and $\{M_n\}$ are ``concave" and ``convex" sequences, respectively. It follows that the sequences $\{m_n\}$ and $\{M_n\}$ are finally monotone.
Thus, $m_r$, and $M_r$ are finally monotone functions of $r$. In particular, $\lim_{r\to 0} m_r$, and $\lim_{r\to 0} M_r$ exist.

\vskip 3mm

(ii) By the definition of positive solutions of minimal growth, for any
$r < r'$, the inequality $m_r v(x) \leq u(x)$ on $S_r \cap X$ implies
$ m_r v(x) \leq u(x)$ on $S_{r'}\cap X$,  and therefore, $m_{r} \leq m_{r'}$. By a similar argument
$M_{r'} \leq M_{r}$. Consequently, $m_r\searrow  m$, and $M_r\nearrow  M$ as $r\to 0_+$. By Lemma~\ref{lem:BHI}, $m_r \asymp M_r$, and therefore $0<m\leq M < \infty$.

The case $\zeta = \infty$ follows by the same argument.
\end{proof}
The second part of Lemma~\ref{lem:mon} readily implies the following corollary:
\begin{corollary}\label{cor1}
Let $\zeta\in \pd \hat{X}$. Assume that Eq.~\eqref{eq:1} admits a positive solution and has a Fuchsian type singularity at $\zeta$. Let $u,\,v$ be two positive solutions  of \eqref{eq:1} of minimal growth in a neighborhood of $\bo \hat{X} \sm \{\zeta\}$. Then $u \asymp v$. More precisely,
$$ m v(x) \leq u(x) \leq M v(x) \qquad x \in X, $$
where $0<m\leq M<\infty$ are given by \eqref{eq:mM}.
\end{corollary}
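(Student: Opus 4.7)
The proof I propose is essentially a direct consequence of Lemma~\ref{lem:mon}(ii) combined with the defining property of positive solutions of minimal growth. The strategy is to first extract the numerical constants $m, M$ from the spherical quotients, then propagate the radial bounds to the whole of $X$ by an exhaustion argument.

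First I would invoke Lemma~\ref{lem:mon}(ii) directly to obtain the two limits $0<m\leq M<\infty$ with $m_r\searrow m$ and $M_r\nearrow M$ as $r\to\zeta$. In particular, for every $r>0$ near $\zeta$ one has the pointwise inequalities $m\leq m_r\leq M_r\leq M$. The positivity $m>0$ and finiteness $M<\infty$ ultimately rest on the uniform Harnack inequality of Lemma~\ref{lem:BHI}, which is already used inside the proof of that lemma.

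Next comes the key propagation step. Since the equation $Q'(w)=0$ is $(p-1)$-homogeneous, for each fixed $r$ the scaled functions $m_r v$ and $M_r v$ are again positive solutions of $Q'(w)=0$ in $X$, and if $v$ is of minimal growth in a neighborhood of $\pd\hat X\sm\{\zeta\}$, so are $m_r v$ and $M_r v$. Pick $K'$ to be the relative neighborhood of $\zeta$ whose boundary trace in $X$ is $\Gamma=S_r\cap X$ (i.e.\ $K'=\overline{B_r}\cap \hat X$ if $\zeta=0$, and $K'=\hat X\sm\overline{B_r}$ if $\zeta=\infty$). By definition of $m_r$ and $M_r$,
\[
m_r v(x)\leq u(x)\leq M_r v(x)\qquad\text{on }\Gamma.
\]
Applying Definition~\ref{def:minimal_gr}(2) to $u$ against the supersolution $M_r v$ yields $u\leq M_r v$ on $X\sm K'$, and applying the same definition to $m_r v$ against the (super)solution $u$ yields $m_r v\leq u$ on $X\sm K'$.

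Finally, I would exhaust $X$ by letting $r\to\zeta$. The sets $X\sm K'$ increase to $X$ (since $\zeta$ lies on $\pd \hat X$, not in $X$), and the bounds $m_r\geq m$, $M_r\leq M$ give, uniformly in $r$ near $\zeta$,
\[
m\, v(x)\leq u(x)\leq M\, v(x)\qquad\text{for every }x\in X\sm K'.
\]
Passing to the limit yields the asserted bounds on all of $X$, and hence $u\asymp v$. There is no real obstacle here; the only slightly delicate point is verifying that $m_r v$ and $M_r v$ inherit the minimal growth property from $v$, which follows immediately from the homogeneity of $Q'$ and the scaling invariance of the comparison condition in Definition~\ref{def:minimal_gr}(2).
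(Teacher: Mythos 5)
Your proof is correct and follows essentially the same route as the paper, which simply notes that the corollary "readily" follows from Lemma~\ref{lem:mon}(ii). You make the "readily" explicit: the monotone convergence $m_r\searrow m$, $M_r\nearrow M$ plus the propagation of the comparison $m_r v\leq u\leq M_r v$ from $\Gamma=S_r\cap X$ into $X\sm K'$ via Definition~\ref{def:minimal_gr}(2) (and the $(p-1)$-homogeneity, so that $m_r v$, $M_r v$ remain minimal-growth solutions) is exactly the argument already embedded in the proof of Lemma~\ref{lem:mon}(ii), and letting $r\to\zeta$ yields the global bounds.
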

\begin{remark}{\em 
Lemmas~\ref{lem:BHI} and \ref{lem:mon} imply that for $u,v \in G_\gz$ either
$$
 u \succ v \quad \mbox{or} \quad u \prec v \quad \mbox{or} \quad u \asymp v.
$$
 }
\end{remark}

We are now ready to prove Proposition~\ref{thm:RegUni} that claims that the regularity of $\zeta$ implies the uniqueness statement of Conjecture~\ref{main_conj}.
\begin{proof}[Proof of Proposition~\ref{thm:RegUni}]
 Let $u$ and $v$ be two positive solutions of (\ref{eq:1})
of minimal growth in a neighborhood of $\bo \hat{X}\sm \{\zeta\}$.
By Corollary~\ref{cor1}
$$
m v(x) \leq  u(x) \leq M v(x) \qquad x\in X,
$$
where $m$ and $M$ are positive numbers given by \eqref{eq:mM}.
By our assumption $\zeta$ is a regular point. Hence,
\begin{equation*}
    \underset{x\in X}{\underset{x \to \zeta }{\lim}} \;\;\frac{u(x)}{v(x)} \quad \mbox{exists}.
    \end{equation*}
Therefore, $m=M$, and $u(x) = m v(x)$.
\end{proof}
Next we prove Proposition~\ref{thm:dilatation} concerning a regular point of a limiting dilated equation.
\begin{proof}[Proof of Proposition~\ref{thm:dilatation}]
 Suppose that $V$ has a Fuchsian type singularity at the point $\zeta\in \bo \hat{X}$, and let $u,v\in \mathcal{G}_\gz$.
 %
 %
Let
\begin{equation}\label{eq:mrMr}
m_r:=\inf_{S_r \cap X'} \frac{u(x)}{v(x)}\,,\qquad
M_r:=\sup_{S_r \cap X'} \frac{u(x)}{v(x)}\,. \end{equation}
If $ M :=\lim_{r \to \zeta} M_r =  \infty$ (respectively, $ m :=\lim_{r \to \zeta} m_r = 0$), then by Lemma \ref{lem:BHI} $m=\infty$ (respectively, $M=0$) and the limit
\begin{equation}\nonumber
\underset{x\in X'}{\underset{x \to \zeta }{\lim}} \;\; \frac{u(x)}{v(x)} \qquad \mbox{exists in the generalized sense}.
\end{equation}
So, we may assume that  $u\asymp v$ in some  neighborhood $X'\subset X$ of $\zeta$.
Let $R_n\to\zeta$ be a sequence associated with the limiting dilated equation
\begin{equation}\label{eq:limitdil}
-\pl (w) + W|w|^{p-2}w = 0 \quad \mbox{in} \qquad Y,
\end{equation}
where $W$ is the weak$^*$ limit in $L_{\mathrm{loc}}^{\infty}(Y)$ of  $V_n:=V_{R_n}$,  the associated scaled potential (see (\ref{eq:wfuchs})).  Fix $x_0\in \mathbb{R}^d$ such that
$R_n x_0\in X$ for all $n\geq 1$. Define
\begin{equation}\label{eq:unvn}
u_n(x):=\frac{u(R_n x)}{u(R_n x_0)},\qquad v_n(x):=\frac{v(R_n x)}{u(R_n x_0)}.
\end{equation}
Then $u_n$ and $v_n$ are positive solutions of the equation
$$
-\pl (w) + V_n(x) |w|^{p-2}w = 0 \quad \mbox{in} \quad X'/R_n.
$$
Since $u_n(x_0)=1$ and $v_n(x_0) \asymp 1$, it follows by a standard elliptic argument that  $\{R_n\}$ admits a subsequence (denoted again by $\{R_n\}$) such that
\begin{equation}\label{eq:local_unif}
\lim_{n \to \infty}u_n(x)=u_\infty(x), \quad  \mbox{ and } \quad  \lim_{n \to \infty}v_n(x)=v_\infty(x)
\end{equation}
locally uniformly in $Y$, and  $u_\infty$ and $v_\infty$ are positive solutions of
the limiting dilated equation \eqref{eq:limitdil} that vanish continuously on $\bo Y \sm \{0\}$ (this follows from  \cite{Ver} and \cite{Lieberman}).
Moreover, for any fixed $R>0$ we have
$$
\sup_{x\in S_R} \frac{u_\infty(x)}{v_\infty(x)}
=  \sup_{x\in S_R} \lim_{n \to \infty}\frac{u_n(x)}{v_n(x)} =
\lim_{n \to \infty} \sup_{x\in S_R} \frac{u_n(x)}{v_n(x)} =
\lim_{n \to \infty} M_{RR_n} = M,
$$
where we used the local uniform convergence of the sequence $\{u_n/v_n\}$
in  $Y$, and the existence of $\lim_{r \to \zeta}M_r=M$.
Similarly,
$$
\inf_{x\in S_R} \frac{u_\infty(x)}{v_\infty(x)} = m.
$$
By our assumption either $\zeta_1 = 0$ or $\zeta_1 = \infty$ is a regular point of
the limiting dilated equation \eqref{eq:limitdil}, and so
$$
 \underset{x\in Y}{\underset{x \to \zeta_1 }{\lim}} \;\;\frac{u_\infty(x)}{v_\infty(x)} \quad \mbox{ exists}.
$$
Therefore, $m=M$, which in turn implies that
\begin{equation*}
    \underset{x\in X'}{\underset{x \to \zeta }{\lim}} \;\;\frac{u(x)}{v(x)} \quad \mbox{ exists}
    \end{equation*}
and so $u$ and $v$ are comparable with respect to $\lsim$ near $\zeta$.
\end{proof}
The following is a slightly stronger version of Proposition~\ref{thm:dilatation}.
\begin{proposition}\label{prop:crit}
Suppose that $\zeta \in \pd \hat{X}$ is a Fuchsian singular point with respect to the equation $Q'(u)=0$ in $X$, and assume that
there is a sequence $R_n \to \zeta$, such that for any two {global} positive solutions $u_\infty, v_\infty$ of
the limiting dilated equation $\resc^{\{R_n\}}(Q)=0$ in
$Y$ that vanish on $\pd Y\sm \{0\}$ and for either $\gz_1=0$ or $\gz_1=\infty$
$$
 \underset{x\in Y}{\underset{x \to \zeta_1 }{\lim}} \;\;\frac{u_\infty(x)}{v_\infty(x)} \quad \mbox{ exists}.
$$
 Then the assertions of Conjecture~\ref{main_conj} hold true for  \eqref{eq:1} and $\gz$.
In particular, if a limiting dilated equation of \eqref{eq:1} is critical in $Y$, then the assertions of Conjecture~\ref{main_conj} hold true for  \eqref{eq:1} and $\gz$.
\end{proposition}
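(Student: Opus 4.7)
The plan is to mimic the argument used for Proposition~\ref{thm:dilatation}, observing that its proof only ever invoked the regularity of $\zeta_1$ through a very particular pair of limit solutions $u_\infty, v_\infty$ which by construction are global positive solutions on $Y$ vanishing on $\bo Y\sm\{0\}$; therefore the weakened hypothesis of the present proposition is in fact enough to push the argument through.

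More concretely, given any $u,v\in\mathcal{G}_\zeta$, I would first dispose of the case $M=\infty$ (equivalently $m=0$) using Lemma~\ref{lem:BHI}, so that we may assume $u\asymp v$ near $\zeta$. Fix $x_0$ such that $R_n x_0\in X$ for all $n$ and set
$$u_n(x):=\frac{u(R_n x)}{u(R_n x_0)},\qquad v_n(x):=\frac{v(R_n x)}{u(R_n x_0)}.$$
Standard $C^{1,\alpha}$ regularity \cite{Lieberman}, together with the boundary behaviour of \cite{Ver}, allows passage to a subsequence along which $u_n\to u_\infty$ and $v_n\to v_\infty$ locally uniformly in $Y$, with $u_\infty,v_\infty$ positive solutions of $\resc^{\{R_n\}}(Q)(w)=0$ in $Y$ that vanish continuously on $\bo Y\sm\{0\}$. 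Repeating the identities
$$\sup_{S_R}\frac{u_\infty}{v_\infty}=\lim_{n\to\infty}M_{RR_n}=M,\qquad \inf_{S_R}\frac{u_\infty}{v_\infty}=\lim_{n\to\infty}m_{RR_n}=m$$
from the proof of Proposition~\ref{thm:dilatation}, the hypothesis that $u_\infty/v_\infty$ has a limit at $\zeta_1\in\{0,\infty\}$ forces $m=M$. This gives the regularity of $\zeta$ for $Q'(u)=0$, and part (ii) of Conjecture~\ref{main_conj} then follows directly from Proposition~\ref{thm:RegUni}.

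For the final clause, I would argue as follows: if $\resc^{\{R_n\}}(Q)(w)=0$ is critical in $Y$, then by Remarks~\ref{rem1}(2) it admits a unique global positive supersolution up to multiplicative constants, so any two global positive solutions $u_\infty,v_\infty$ vanishing on $\bo Y\sm\{0\}$ are proportional. Hence $u_\infty/v_\infty$ is identically a positive constant and its limit at either $0$ or $\infty$ trivially exists, so the first part of the proposition applies. No substantial new obstacle arises relative to Proposition~\ref{thm:dilatation}; the only point requiring attention is that the dilatation limits $u_\infty,v_\infty$ are genuinely global positive solutions on $Y$ vanishing on $\bo Y\sm\{0\}$, which is precisely the class for which the weakened hypothesis is phrased, and this is already contained in the verification carried out in the proof of Proposition~\ref{thm:dilatation}.
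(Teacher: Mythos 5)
Your proposal is correct and follows essentially the same route as the paper: the paper's proof is a one-line observation that the argument in Proposition~\ref{thm:dilatation} only uses the regularity hypothesis through the particular pair of limit solutions $u_\infty, v_\infty$, which are global positive solutions on $Y$ vanishing on $\pd Y \sm \{0\}$, so the weaker hypothesis suffices. Your treatment of the criticality clause (unique positive supersolution forces any two such solutions to be proportional, whence the ratio is constant and its limit trivially exists) is the correct way to fill in the detail the paper leaves implicit.
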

\begin{proof}
The proof follows from the simple observation that in the proof of Proposition~\ref{thm:dilatation},  the limit
$$
 \underset{x\in Y}{\underset{x \to \zeta_1 }{\lim}} \;\;\frac{u_\infty(x)}{v_\infty(x)}
$$
should exist only for any two global positive solutions of
the limiting dilated equation  $\resc^{\{R_n\}}(Q)=0$ in
$Y$ that vanish on $\pd Y\sm \{0\}$.
\end{proof}

\begin{remark}\label{remark}{\em 
Fix a Fuchsian singular point $\zeta \in \pd \hat X$, and $u\in \mathcal{G}_\gz$. Assume that
there is a sequence $R_n \to \zeta$, such that either $0$ or $\infty$
is a regular point with respect to the solution $u_\infty$ of the limiting dilated equation $\resc^{\{R_n\}}(Q)=0$ in $Y:=\lim_{n\to\infty} X/R_n$, where $u_\infty$ is the limit of the sequence $\{u_n\}$ defined by \eqref{eq:unvn}. Then $\zeta$ is a regular
point with respect to the solution $u$ of the equation $Q'(u)=0$ in $X$. The proof of this statement is the same as the proof of Proposition~\ref{thm:dilatation}.
 }
\end{remark}
\section{Proofs of theorems~\ref{thm:sym} and \ref{thm:assume_scp}}
\label{sec:proofs}
We start with the proof of Theorem~\ref{thm:sym} concerning Liouville's theorem in the
spherically symmetric case.  First we need the following lemma.
\begin{lemma}\label{lem:sym}
 Suppose that the domain $X$ and the potential $V$ are spherical
symmetric, and that Eq.~\eqref{eq:1} admits a positive
solution. Assume further that $V$ has a Fuchsian type singularity at
$\zeta$, where $\zeta=0$ or $\zeta=\infty$.  Then
for any $u\in \mathcal{G} _\gz$ there exists a radial solution $\tilde{u}\in \mathcal{G} _\gz$ such that $u\approx \tilde{u}$.
\end{lemma}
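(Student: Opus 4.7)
The plan has two main movements: first construct a radial positive solution $\tilde u$ of the correct asymptotic class, then show $u \sim \tilde u$ by combining the uniform Harnack estimates, the dilation procedure, and the strong comparison principle for the limiting dilated equation.

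For the construction, I would substitute the radial ansatz $w(x)=v(|x|)$ into $Q'(w)=0$ to obtain the ODE
\[
-r^{1-d}\bigl(r^{d-1}|v'|^{p-2}v'\bigr)' + V(r)|v|^{p-2}v = 0.
\]
Since $X$ and $V$ are spherically symmetric and \eqref{eq:1} admits a positive solution, the radial analog of the exhaustion argument in Remarks~\ref{rem1}(1) (radial Dirichlet problems on a sequence of annuli exhausting the punctured neighborhood of $\gz$) yields a positive radial solution of minimal growth toward $\gz$, call it $\tilde u_-$, and a second branch radial solution $\tilde u_+$ of non-minimal growth. Given $u\in\mathcal{G}_\gz$, Lemmas~\ref{lem:BHI} and~\ref{lem:mon} imply that $\sup_{S_r}(u/\tilde u_-)$ converges to some $M\in(0,\infty]$; if $M<\infty$, set $\tilde u:=\tilde u_-$, otherwise set $\tilde u:=\tilde u_+$ and re-apply Lemmas~\ref{lem:BHI} and~\ref{lem:mon} to ensure $0<m\le M<\infty$, so that $u\asymp\tilde u$ near $\gz$.

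The heart of the argument is to prove $m=M$, where $m,M$ are the limits from Lemma~\ref{lem:mon} applied to the pair $(u,\tilde u)$. Following the dilation scheme of Proposition~\ref{thm:dilatation}, pick $R_n\to\gz$, fix a reference point $x_0$, and set
\[
u_n(x):=\frac{u(R_n x)}{u(R_n x_0)}, \qquad \tilde u_n(x):=\frac{\tilde u(R_n |x|)}{u(R_n x_0)}.
\]
By the standard elliptic estimates (Lieberman, V\'eron) used in the proof of Proposition~\ref{thm:dilatation}, a subsequence converges locally uniformly on $Y=\mathbb{R}^d\setminus\{0\}$ to positive solutions $u_\infty,\tilde u_\infty$ of the limiting dilated equation $-\pl(w)+W|w|^{p-2}w=0$, where $W$ is radial because $V$ is. The function $\tilde u_\infty$ is radial, and the local uniform convergence combined with Lemma~\ref{lem:mon} gives
\[
\sup_{S_R}\frac{u_\infty}{\tilde u_\infty}=M, \qquad \inf_{S_R}\frac{u_\infty}{\tilde u_\infty}=m \qquad \text{for every } R>0.
\]

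The hard part will be the final comparison. Suppose $m<M$; then $u_\infty$ and $M\tilde u_\infty$ are positive solutions of the same limiting equation on $Y$ with $u_\infty\le M\tilde u_\infty$ in $Y$ and equality attained on every sphere $S_R$. The plan is to apply Theorem~\ref{thm:SCP} to conclude $u_\infty\equiv M\tilde u_\infty$, contradicting $m<M$. The obstacle is verifying the hypotheses of Theorem~\ref{thm:SCP}, namely the structure of the critical set. Since $\tilde u_\infty$ is radial, its critical set is a union of critical spheres on which $\tilde u_\infty'$ vanishes; between two consecutive such spheres (or in a neighborhood of a sphere where $\tilde u_\infty'\ne 0$, including the sphere through $x_0$ by our normalization), pick an annular subdomain $\Omega'\Subset Y$ on which $\tilde u_\infty$ is strictly monotone, so that $S=S_{u_\infty}\cap S_{\tilde u_\infty}$ is empty on $\bo\Omega'$ and the nondegeneracy set $S^c$ is connected. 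Theorem~\ref{thm:SCP} then forces $u_\infty\equiv M\tilde u_\infty$ in $\Omega'$, and propagating across any radial critical sphere by connectedness of $Y$ and the identity principle for $C^1$ solutions yields $u_\infty= M\tilde u_\infty$ throughout $Y$. Unwinding the dilation gives $\lim_{x\to\gz}u(x)/\tilde u(x)=M$, i.e., $u\sim\tilde u$, completing the proof.
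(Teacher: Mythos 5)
Your construction of $\tilde u$ is where the proof breaks down, and in a way that undermines the whole argument.

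You posit two radial branches $\tilde u_-$ (minimal growth) and $\tilde u_+$ (non-minimal growth) and then try to match $u$ to one of them: if $\sup_{S_r}(u/\tilde u_-)\to M<\infty$ take $\tilde u_-$, ``otherwise set $\tilde u:=\tilde u_+$ and re-apply Lemmas~\ref{lem:BHI} and~\ref{lem:mon} to ensure $0<m\le M<\infty$.'' Nothing guarantees this. Lemma~\ref{lem:mon}(i) only gives existence of the limits $m,M\in[0,\infty]$; the conclusion $0<m\le M<\infty$ in Lemma~\ref{lem:mon}(ii) requires minimal growth, which $u$ need not have. More to the point, the assertion that an arbitrary germ $u\in\mathcal{G}_\gz$ is comparable (in the $\asymp$ sense) to \emph{some member} of a fixed two-element family of radial solutions is precisely the total-order statement that makes $\gz$ a regular point --- i.e.\ the content of Theorem~\ref{thm:sym}(i), which the paper proves \emph{using} this lemma. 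As written, your construction is circular: to pick the right branch you would need to already know that germs are linearly ordered by $\lsim$. The paper avoids this entirely by building $\tilde u$ directly out of $u$: solve the radial Dirichlet problem $Q'(w)=0$ on the annulus $B_R\sm\overline{B_r}$ with boundary data $m_R:=\inf_{S_R}u$ and $m_r:=\inf_{S_r}u$; WCP gives $u_r\le u$, uniform Harnack (Lemma~\ref{lem:BHI}) gives $C^{-1}u\le u_r$ with $C$ independent of $r$, and a subsequential limit as $r\to 0$ produces a radial $\tilde u$ with $\tilde u\asymp u$ near $\gz$. There is no branch-selection step to justify.

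A secondary remark: you overshoot the target. The lemma only asserts $u\approx\tilde u$ (comparability up to constants), which is exactly what feeds into the proof of Theorem~\ref{thm:sym}. Your second and third paragraphs, aimed at proving $m=M$ via dilation and Theorem~\ref{thm:SCP}, belong to the proof of Theorem~\ref{thm:sym}(i)/(iii), not to this lemma; including them here would again invert the logical order of the paper. (That said, even there your handling of radial critical spheres is too optimistic: the ``identity principle for $C^1$ solutions'' does not let you cross a set where $\grad\tilde u_\infty$ and $\grad u_\infty$ both vanish on an open annulus, which is exactly why the paper has to argue separately in the proof of Theorem~\ref{thm:sym} that either $\gz$ is interior to $S_{u_\infty}$, forcing $W\equiv 0$ there, or else some annulus misses the critical set.)
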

 \begin{proof}
 Assume that $\gz=0$, and let $u\in \mathcal{G} _\gz$. Let $R$ be a fixed positive number such that $u$ is defined in $B_{2R}\sm \{0\}$. For $0<r\leq R$ denote $m_r:=\inf_{S_r} u(x)$, and consider the solution $u_r$ of the following Dirichlet problem
 \begin{equation}\label{eq:rad5}
\left\{
  \begin{array}{ll}
Q'(w) = 0 \qquad \mbox{in } B_R\sm \overline{B_r} ,\\[1mm]
\;\;w(x)= m_R \qquad |x|=R,     \\[1mm]
\;\;w(x)= m_r \qquad |x|=r.
  \end{array}
\right.
 \end{equation}
Clearly $u_r$ is spherical symmetric.   Moreover, by the uniform Harnack inequality and the WCP there exists a constant $C$ independent of $r$ such that $Cu\leq u_r\leq u$ in $B_R\sm B_r$. Consequently, there exists a sequence of $r_n\to 0$ such that $u_{r_n}\to \tilde{u}$ locally uniformly in $B_R\sm \{0\}$. Clearly, $\tilde{u}$ is a radial positive solution of the equation $Q'(w)=0$ in $B_R\sm \{0\}$ satisfying $\tilde{u}\approx u$ near the origin.

 The proof for $\gz=\infty$ is similar and left for the reader.
 \end{proof}
\begin{proof}[Proof of Theorem~\ref{thm:sym}]
(i) and (iii)  Let $u$ and $v$ be two positive solutions of the equation $Q'(u)=0$  in a neighborhood $X' \subset X$ of $\zeta$, and assume first that  $u$ is spherically symmetric. We follow along the lines of the proof of Proposition~\ref{thm:dilatation}.

Lemmas \ref{lem:BHI} and \ref{lem:mon} imply that either
\begin{equation}\nonumber
\underset{x\in X'}{\underset{x \to \zeta }{\lim}} \;\; \frac{u(x)}{v(x)}\in \{0,\infty\},
\end{equation}
and the limit exists in the generalized sense, or $u\asymp v$ in a neighborhood $X'' \subset X$ of $\zeta$.

Therefore, we may assume that $u$ and $v$ are comparable. Let $\{R_n\}$ be a sequence such that $R_n\to \zeta$ and
\begin{equation*}
\lim_{n \to \infty}u_n(x)=u_\infty(x), \quad  \mbox{ and } \quad  \lim_{n \to \infty}v_n(x)=v_\infty(x),
\end{equation*}
where $u_n$ and $v_n$ are the corresponding dilated normalized solutions defined as in \eqref{eq:unvn}, and $u_\infty,\,v_\infty$ are solutions of a limiting dilated equation
\begin{equation*}
\resc^{\{R_n\}}(Q)(w) = -\pl(w) + W(|x|)|w|^{p-2} w = 0\qquad  x\in \mathbb{R}^d\sm \{0\}.
\end{equation*}
Clearly,  $W$ and $u_\infty$ are spherically symmetric. As in the proof of Proposition~\ref{thm:dilatation}, for any fixed $R>0$ we have
$$
\inf_{x\in S_R} \frac{u_\infty(x)}{v_\infty(x)} = m,
\qquad \sup_{x\in S_R} \frac{u_\infty(x)}{v_\infty(x)}
= M,
$$
where as usual, $m= \lim_{r\to \zeta} m_r$, and $M= \lim_{r\to \zeta} M_r$, and $m_r,\;M_r$ are defined by \eqref{eq:mrMr}. Moreover, we may assume that $W$ is nontrivial near $\zeta$, otherwise, $W$ has a weak Fuchsian singularity at $\zeta$, and the result follows from part (i) of Theorem~\ref{thm:weak}.

The set $S_{u_\infty}$ of the critical points of $u_\infty$ is closed and spherically symmetric. Therefore, one of the following two cases occur:
\begin{enumerate}
\item $\zeta$ is an interior point of $\widehat{S_{u_\infty}}$. Then there is a neighborhood $\Omega_\zeta$ of $\zeta$ such that $|\grad u_\infty| = 0$ in $\Omega_\zeta$.
So, $u_\infty$ is constant near $\zeta$, but this contradicts the non-triviality of $W$ near $\zeta$.
\item There exists an  annulus $A=B_{R}\sm B_{r}$ sufficiently close to $\zeta$ such that $S_{u_\infty}\cap A=\emptyset$. Therefore,  Theorem~\ref{thm:SCP} implies that the SCP holds true in $A$. Thus,
$m v_\infty = u_\infty = M v_\infty$ in $A$. So, $m=M$, and the proposition follows.
\end{enumerate}
Now let $u,v\in \mathcal{G}_\gz$. By Lemma~\ref{lem:sym} there exists a radial solution $\tilde{u}\in \mathcal{G}_\gz$ such that $\tilde{u}\approx u$. By the proof above, we have that $\tilde{u} \sim u$ and
\begin{equation}\nonumber
\underset{x\in X'}{\underset{x \to \zeta }{\lim}} \;\; \frac{\tilde{u}(x)}{v(x)}
\end{equation}
 exists in the generalized sense. Therefore,
\begin{equation}\nonumber
\underset{x\in X'}{\underset{x \to \zeta }{\lim}} \;\; \frac{u(x)}{v(x)}
\end{equation}
 exists in the generalized sense.

(ii) We claim that there exists  a spherically symmetric positive solution of the equation $Q'(w) = 0$ in $X$ of minimal growth in a neighborhood of $\bo \hat{X}\sm \{\zeta\}$.

Indeed, let  $\mathcal{B}_n\Subset \mathcal{B}_{n+1}\Subset X$, $n=1,2,\ldots$,  be  a sequence of radially symmetric domains of $X$ that exhausts $X$, and let $\pd_\pm \mathcal{B}_n$ be the connected components of $\pd \mathcal{B}_n$, such that $\pd_- \mathcal{B}_n \to \zeta$
 (if $X=\mathbb{R}^d$, then $\pd \mathcal{B}_n$ has only one connected component, in this case $\pd_+ \mathcal{B}_n=\emptyset$). Fix a point $x_0 \in B_0$ and
consider the sequence $\{u_n\}$ of the (spherically symmetric) solutions of the following Dirichlet problems
\begin{equation*}
\left\{
  \begin{array}{ll}
Q'(w) = 0 \qquad \mbox{in } \mathcal{B}_n ,\\[1mm]
\;\;w(x)= 0 \qquad \mbox{on } \pd_+ \mathcal{B}_n,     \\[1mm]
\;\;w(x)= C_n \qquad \mbox{on } \pd_- \mathcal{B}_n,
  \end{array}
\right.
 \end{equation*}
where $C_n$ is a positive constant such that $u_n(x_0) =1$.
It follows that $\{u_n\}$ admits a subsequence that converges locally uniformly to a nonnegative solution $u$ of the equation $Q'(w) = 0$ in $X$. By construction (cf. Remarks~\ref{rem1}),  $u$  is a positive radial solution of minimal growth in a neighborhood of $\bo \hat{X}\sm \{\zeta\}$.

Corollary~\ref{cor1} implies that any other positive solution of minimal growth in a
neighborhood of $\bo \hat{X}\sm \{\zeta\}$ is comparable to the above radial solution $u$.  Therefore the uniqueness follows readily from part (i).
\end{proof}
\begin{remark}\label{rem:8} {\em 
Proposition~\ref{thm:dilatation} and Theorem~\ref{thm:sym} imply the following useful result.

Suppose that the potential $V$ is of the form $V=V_1+V_2$, where $V_1$ is spherical
symmetric, and has a Fuchsian isolated singularity at
$\zeta$, where $\zeta=0$ or $\zeta=\infty$, and $V_2$ has a weak Fuchsian singularity at
$\zeta$. Assume further that Eq.~\eqref{eq:1} admits a positive
solution.   Then

(i) $\zeta$ is a regular point of \eqref{eq:1}.

(ii) Equation \eqref{eq:1} admits a unique positive solution of minimal growth in a
neighborhood of infinity in $X \sm \{\zeta\}$.
 }
\end{remark}

Finally, we prove Theorem \ref{thm:assume_scp}, which claims that Conjecture~\ref{main_conj} holds true under the assumption that the SCP holds true for a limiting dilated equation.
\begin{proof}[Proof of Theorem~\ref{thm:assume_scp}]
We prove only the first part of the theorem. The second part
follows by Proposition~\ref{thm:RegUni}. Not surprisingly,
the proof of the first part is similar to the proof of
Proposition~\ref{thm:dilatation}.

Recall the definition of  $m_r$ and $M_r$  (see \eqref{eq:mrMr}). We need to prove that $m=M$, where $M :=\lim_{r \to \zeta} M_r$, and $m :=\lim_{r \to \zeta}m_r$.

If  $M =  \infty$ (respectively,  $m = 0$), then by Lemma~\ref{lem:BHI}, $m=\infty$ (respectively, $M=0$) and the statement follows.

So, let us assume that $0<m\leq M<\infty$. Accordingly $u \asymp v$ in $X'$,
and therefore (after choosing a subsequence of the given sequence $\{R_n\}$) the normalized dilated sequences
$\{u_n\}$ and $\{v_n\}$ defined by \eqref{eq:unvn} converge locally uniformly to positive solutions $u_\infty$ and $v_\infty$ of the dilated equation
\begin{equation}\label{eq:eq_with_scp1}
-\pl (u) + W(x) |u|^{p-2}u = 0 \qquad \mbox{in }  Y.
\end{equation}
Using the same argument as in the proof of Proposition \ref{thm:dilatation}, we have for any $R>0$
 $$
 m=\inf_{x \in S_R \cap Y} \frac{u_\infty(x)}{v_\infty(x)}\,,
 \qquad
 M=\sup_{x \in S_R \cap Y} \frac{u_\infty(x)}{v_\infty(x)}\,.
 $$
Since $\gz$ is an isolated point, it follows that $Y$ is the punctured space,  and $mu_\infty$ and $v_\infty$ touch each other only inside $Y$. Therefore,
our assumption on the validity of SCP with respect to any two positive global solutions \eqref{eq:eq_with_scp1}  implies that
$m v_\infty = u_\infty = M v_\infty$ in $Y$, and therefore $M=m$.
\end{proof}

\section{The nonisolated singularity case}
\label{sec:boundary}
Some of our results extend to the case of a nonisolated singular point $\zeta \in \pd \hat{X}$. More precisely,
we consider two cases (cf. Section~\ref{sect1}):
\begin{enumerate}
\item $X$ is a domain (which might be unbounded and nonsmooth) such that the singular
point $\zeta=0$ belongs to a $C^2$-portion of $\pd X$.
\item  $X$ is a cone near infinity, and  $\zeta = \infty$. More precisely, the intersection of $X$ with the exterior of some ball is an open connected truncated cone with a nonempty $C^2$ boundary.
\end{enumerate}

One of the main technical difficulties of the nonisolated singularity case is that the
dilated equation is defined on a cone $Y$ rather on the punctured space as in the isolated singularity case, and
the regularity of neither the origin nor $\infty$ with respect to the  $p\,$-Laplace equation on a cone is known (cf. Example~\ref{example3}).
Hence we are unable to extend Theorem~\ref{thm:weak} to this case. On the other hand, Theorem~\ref{thm:assume_scp} can be readily extended. To this end, we introduce the following notion
\begin{definition}\label{def:bpl}{\em 
Let $\Omega'$ be a bounded subdomain of a domain $\Omega\subset \mathbb{R}^d$, $\Omega'\subsetneqq \Omega$, and let $\Gamma$ a $C^{2}$-relatively open portion of $\pd \Omega'$.
Assume that the equation $Q'(w)=0$ admits a positive solution in $\Omega$. We say that {\bf the boundary point lemma is valid for the equation $Q'(w)=0$ and $\Gamma$} if for any two positive solutions $u$ and $v$ of the equation $Q'(u)=0$ in $\Omega'$ that vanish continuously on $\Gamma$, and satisfy
$u<v$ in a neighborhood  $B_\varepsilon(y)\cap \Omega'$ of $y\in \Gamma$, we have
    $$\pd_\nu  v(y)<\pd_\nu u(y),$$ where $\nu$ denotes the unit outward normal to $\pd \Omega'$.
 }
\end{definition}

For the validity of the boundary point lemma for quasilinear equations see for example
\cite{Tak,Ser} and the references therein.

The following result can be proved along the lines of the
proof of Theorem~~\ref{thm:assume_scp} (see, Section~\ref{sec:proofs}).

\begin{theorem}\label{thm:assume_scpB}
Assume that \eqref{eq:1} admits a positive solution, and suppose that $V$ has a Fuchsian singularity at $\zeta$. Assume further that
there is a sequence $R_n \to \zeta$ such that SCP and the boundary point lemma hold true with respect to any two global positive solutions of the
limiting dilated equation
\begin{equation}\label{eq:ldeB}
\resc^{\{R_n\}}(Q)(w) = 0 \qquad \mbox{in }\, Y
\end{equation}
that vanish on $Y\sm\{0\}$.
 Then
\begin{enumerate}
\item[(i)] $\zeta$ is a regular point of the Eq.~\eqref{eq:1}.
\item[(ii)] Equation \eqref{eq:1} admits a unique positive solution
of minimal growth in a neighborhood of $\bo \hat{X} \sm \{\zeta\}$.
\end{enumerate}
\end{theorem}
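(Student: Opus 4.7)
The plan is to adapt the proof of Theorem~\ref{thm:assume_scp} from Section~\ref{sec:proofs}, using the boundary point lemma precisely where the isolated case only needed the interior strong comparison principle. Let $u,v\in\mathcal{G}_\gz$ and, following the proof of Proposition~\ref{thm:dilatation}, define $m_r$ and $M_r$ via \eqref{eq:mrMr} and set $m:=\lim_{r\to\gz}m_r$ and $M:=\lim_{r\to\gz}M_r$. By Lemma~\ref{lem:BHI} we may assume $0<m\le M<\infty$; otherwise $u/v$ already has a (generalized) limit at $\gz$. Passing to the normalized dilates \eqref{eq:unvn}, a subsequence converges locally uniformly on $Y$ to positive solutions $u_\infty,v_\infty$ of $\resc^{\{R_n\}}(Q)=0$ that vanish continuously on $\pd Y\sm \{0\}$, and for each $R>0$ the identities $\inf_{S_R\cap Y}u_\infty/v_\infty=m$ and $\sup_{S_R\cap Y}u_\infty/v_\infty=M$ are derived exactly as in that earlier proof.

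By $(p-1)$-homogeneity, $\tilde v:=m v_\infty$ is itself a positive solution of the limiting dilated equation, and $\tilde v\le u_\infty$ throughout $Y$. The assumed SCP produces a dichotomy: either $\tilde v\equiv u_\infty$ on $Y$, in which case $u_\infty\equiv m v_\infty$ and so $M=m$, or $\tilde v<u_\infty$ strictly in $Y$. In the second case, fix any $y\in\pd Y\sm \{0\}$ and apply the boundary point lemma to the pair $(\tilde v,u_\infty)$: since both solutions vanish at $y$ and $\tilde v<u_\infty$ in a neighborhood of $y$, we obtain $\pd_\nu u_\infty(y)<\pd_\nu\tilde v(y)=m\,\pd_\nu v_\infty(y)$. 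Both outward normal derivatives are strictly negative (the Hopf conclusion of the same lemma, applied with the zero function in place of $\tilde v$), so dividing through by $\pd_\nu v_\infty(y)<0$ reverses the inequality and yields $\pd_\nu u_\infty(y)/\pd_\nu v_\infty(y)>m$. By the $C^{1,\alpha}$ regularity up to $\pd Y$ and the boundary Harnack estimates of \cite{Ver,Lewis,Lieberman}, the ratio $u_\infty/v_\infty$ extends continuously to $\pd Y\sm\{0\}$, and its boundary value at $y$ is precisely this quotient of normal derivatives. Hence the boundary value strictly exceeds $m$ at every $y\in\pd Y\sm\{0\}$.

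Combining the interior strict inequality $u_\infty>m v_\infty$ on $Y$ with the strict boundary inequality just obtained, the continuous function $u_\infty/v_\infty$ is strictly greater than $m$ on the compact set $S_R\cap\overline Y$ (which is compact because $S_R$ is), and is therefore bounded below there by some $m+\varepsilon>m$. This contradicts $\inf_{S_R\cap Y}u_\infty/v_\infty=m$. Consequently $m=M$ in the non-degenerate case as well, so $\gz$ is a regular point of \eqref{eq:1}, proving (i); part (ii) then follows from Proposition~\ref{thm:RegUni} exactly as in Theorem~\ref{thm:assume_scp}. The main obstacle is the last identification: to convert the pointwise boundary-derivative inequality from the boundary point lemma into a quantitative strict separation $u_\infty/v_\infty\ge m+\varepsilon$ on the compact slice $S_R\cap\overline Y$, one must know that the ratio $u_\infty/v_\infty$ extends continuously to $\pd Y\sm\{0\}$ with boundary value equal to the ratio of normal derivatives; this step depends on the cone/$C^2$-boundary hypothesis together with the boundary Harnack principle already invoked in the proof of Lemma~\ref{lem:BHI}.
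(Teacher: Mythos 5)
Your proof is correct and is exactly what the paper has in mind: the paper explicitly declines to give the argument, saying only that Theorem~\ref{thm:assume_scpB} ``can be proved along the lines of the proof of Theorem~\ref{thm:assume_scp},'' and your write-up supplies precisely the needed modification. You correctly identify that the obstruction in the nonisolated case is that $S_R\cap Y$ is no longer compact inside $Y$, so the infimum $m$ of $u_\infty/v_\infty$ need not be attained at an interior point where SCP could be invoked directly; the boundary point lemma together with the $C^{1,\alpha}$-up-to-the-boundary regularity and the observation that the ratio of two positive solutions vanishing on the $C^2$ boundary extends continuously there (with boundary value the quotient of normal derivatives) is exactly what closes this gap. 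A minor cosmetic point: the Hopf-type strict negativity $\partial_\nu v_\infty(y)<0$ is not literally an instance of Definition~\ref{def:bpl} (which is stated for two positive solutions, not for $u\equiv 0$), but it is standard and is available from the references already cited for the boundary point lemma, so this does not affect correctness.
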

We conclude this section with an example where we consider the $p\,$-Laplace equation on a smooth cone. Note that although we show below that the positive Liouville theorem holds true, we are unable to prove the regularity of the singular points $\gz=0,\infty$.
\begin{example}\label{example3}{\em 
 Let $X\subset \mathbb{R}^d$ be a cone generated by a smooth subdomain $S$ of the unit sphere $S_1$ such that $\pd S\neq \emptyset$. Consider the equation
   \begin{equation}\label{eq:pl}
    -\Delta_p(u)=0 \qquad \mbox{ in } X.
      \end{equation}
It is proved in \cite{Tolks,PV} that \eqref{eq:pl} admits a unique regular  (respectively, singular) {\em separable} positive $p\,$-harmonic function $u_\infty$ (respectively, $u_0$) in $X$
of the form
 $$u_\infty(x) =|x|^{\beta_\infty}\phi_\infty({x}/{|x|}), \qquad \mbox{ (respectively, } u_0 = |x|^{\beta_0}\phi_0({x}/{|x|}) \mbox{)},$$
where $\beta_0<0<\beta_\infty$, and $u_\infty$ (respectively, $u_0$) satisfies
$$u_\infty(x)\! =\!0 \; \forall x\!\in\! \partial X, \; \mbox{ (respectively, } u_0(x)\! =\!0\; \forall x\!\in\! \partial X\sm \{0\}, \;\lim_{x\to\infty}u_0(x)\!=\!0\mbox{)}.$$

Clearly, $u_\infty$ is a positive solution of  minimal growth in a neighborhood of $\bo \hat{X}\sm \{\infty\}$.
Moreover, if $p \geq d$, then $u_0$ is a positive solution of minimal growth in a neighborhood of $\bo \hat{X} \sm \{0\}$. Indeed, $|\grad u_0|$ is positive and $|\grad u_0(x)| \leq C |x|^{-1} |x|^{\beta_0}$.
For $k=1,\,2,\, \dots$ put $u_k := \chi_k u_0$, where $0\leq \chi_k\leq 1 $ is a smooth function such that
$$
\chi_k(x) = \left\{
          \begin{array}{ll}
    1 &  \quad  1<|x| < k, \\[2mm]
    0 &  \quad  0<|x|<1/2 \; \mbox{ or } |x| > 2k,
          \end{array}\right.
$$
and $|\grad \chi_k(x)| \leq C \frac{1}{k}$ if  $k<|x|<2k$.
We have $|\grad u_k(x)| \leq C k^{-1} |k|^{\beta_0}$ for $k < |x| < 2k$. Note that for any fixed ball $B$ in $B_1^*\cap X$ and large $k$ we have
$\int_B u_k(x)^p \,\mathrm{d}x=\mathrm{constant}>0$. Moreover,
the corresponding Lagrangian of Picone identity given by
$$
L(u_k,\,u) := \frac{1}{p} \left[|\grad u_k|^p + (p-1) \frac{u_k^p}{u^p}|\grad u|^p
-p \frac{u_k^{p-1}}{u^{p-1}} \grad{u_k} \cdot \grad u |\grad u|^{p-2} \right]
$$
is zero in $(B_k\sm B_1) \cup B^*_{2k}$.
 Hence, 
\begin{multline*}
\int_{X\sm B_1} L(u_k,u_0)\, \mathrm{d}x =\int_{\{x\mid k<|x|<2k\}} L(u_k,u_0)\, \mathrm{d}x \\[2mm]
\leq \int_{\{x\mid k<|x|<2k\}} C k^{-p} k^{\beta_0 p} \, \mathrm{d}x  \leq C k^{-p} k^{\beta_0 p} k^d
\end{multline*}
converges to zero as $k \to \infty$ if $p \geq d$ (actually, it is enough to assume that $d-p+\beta_0p<0$). Therefore, Theorem~7.1 of \cite{Pin2}  implies that $u_0$ is a positive solution of minimal growth in a neighborhood of $\bo \hat{X} \sm \{0\}$.

Note that the set $S_{u_\infty}$ (respectively, $S_{u_0}$) of critical points of $u_\infty$ (respectively, $u_0$) is empty. Moreover, by Theorem~2.15 of \cite{Lewis1}, the boundary point lemma is valid for the $p\,$-Laplacian on  $\pd X\sm \{0\}$. It follows as in the proof of Theorem~\ref{thm:sym} that $\infty$ (respectively, $0$) is a regular point with respect to $u_\infty$ and $u_0$, and that $u_\infty$ (respectively, $u_0$ if $p \geq d$) is the unique positive $p\,$-harmonic function in $X$ of minimal growth in  $\pd\hat{X}\sm \{\infty\}$ (respectively, $\pd\hat{X}\sm \{0\}$). In particular, it follows that the positive Liouville theorem for $p\,$-harmonic functions in $X$ that vanishes continuously on $\pd X$ holds true (without any Phragm\'en-Lindel\"of condition; cf.~Theorem~2.1.2 in \cite{Tolks}).
 }
\end{example}

\section{Further examples and remarks}\label{sec7}
In this section we present some examples and remarks which illustrate our results. We also present a new result concerning the existence of the limit of a positive solution $u$ at an isolated Fuchsian singular point in the case of a nonnegative potential $V$ (see Theorem~\ref{thm:nonnegativeV}).

The first example concerns positive solutions of \eqref{eq:hardy1} which appear  naturally in studying improved Hardy's inequality (see for example \cite{BFT} and the references therein).
\begin{example}\label{example2}{\em 
Let $X=\mathbb{R}^d\sm \{0\}$. Consider the equation
   \begin{equation}\label{eq:hardy1}
    -\Delta_p(u)-\lambda \frac{|u|^{p-2}u}{|x|^p} +V(x)|u|^{p-2}u=0 \qquad \mbox{ in } X.
      \end{equation}
where $\lambda\leq c_H=\left|\frac{p-d}{p}\right|^p$ is the Hardy constant, and $V$ satisfies the Fuchsian-type assumption
\begin{equation}\label{eq:FuchsCon3}
 |x|^p |V(x)| \leq C \qquad \mbox{for a. e. }  x\in X,
    \end{equation}
where $C$ is a positive constant. Note that the case $V=0$ is discussed in Example~\ref{example1}.
 Suppose further that $V$ has weak Fuchsian singularities both at $0$ and $\infty$.

 Remark~\ref{rem:8} implies that \eqref{eq:hardy1} admits a unique positive solution of minimal growth in  $\pd\hat{X}\sm \{0\}$, and a unique positive solution of minimal growth in $\pd\hat{X}\sm \{\infty\}$.
 }
 \end{example}
\begin{remark} \label{rem:VinLq}{\em 
Suppose that $\zeta=0 \in \pd \hat{X}$, $V \in
L^\infty_{\mathrm{loc}}(X)$ has a Fuchsian singularity at $0$
and $V \in L^q(B_1 \cap X)$ with $ q >d/p $. We claim that $V$ has a weak
Fuchsian singularity at $0$.

Indeed, let $\phi$ be a bounded function compactly supported in $B_r
\sm \{0\}$ for some $r>0$.  Then using H\"older's inequality we obtain
\begin{align*}
\left|\int_{X/R} R^p V(R x)\phi(x) \,\mathrm{d}x\right| & \leq R^{p-d}
\int_{X} |V(x)|\, |\phi(x/R)| \, \mathrm{d} x \\
& \leq ||\phi||_\infty R^{p-d} \int_{X \cap B_{Rr}} |V(x)| \, \mathrm{d} x\\
& \leq ||\phi||_\infty R^{p-d} |X \cap B_{Rr}|^{d -d/q}
||V||_{L^q(B_{Rr} \cap X)}.
\end{align*}
Therefore, there is a constant $C$ such that
\begin{multline*}
\left|\int_{X/R} R^p V(R x)\phi(x) \,\mathrm{d}x\right| \leq C
||\phi||_\infty R^{p-d} (Rr)^{d - d/q} ||V||_{L^q(B_{1} \cap X)}\\[2mm]
 \leq CR^{p-d/q}
 \underset{R\to 0}{\longrightarrow} 0.
\end{multline*}

 In the same
manner, one can prove that if $V \in L^\infty_{\mathrm{loc}}(X)$ has a
Fuchsian singularity at $\infty$ and
$V \in L^q(B^*_1 \cap X)$ with $ 1 \leq q <d/p $, then $V$ has a weak
Fuchsian singularity at $\infty$.
 }
\end{remark}
\begin{remark}\label{rem:2}{\em 
   The main results of the present paper hold true if instead of \eqref{eq:FuchsCon} one assumes that there exist $0<a<1<b<\infty$, and $R_n\to\zeta$ such that
   \begin{equation}\label{eq:FuchsConess}
 |x|^p |V(x)| \leq C \qquad \mbox{for a. e. } x\in \mathcal{A}:=\bigcup_{n=1}^\infty\{x\in X\mid aR_n<|x|<bR_n\}.
    \end{equation} Such a set $\mathcal{A}$ is called  an {\bf essential set with respect to the singular point $\zeta$} (see \cite{Pin1}).
    }
   \end{remark}

We conclude this section with an application of Proposition~\ref{thm:dilatation} to the asymptotic behavior of positive solutions near
singularity for an equation with a nonnegative potential $V$. The result is new even for linear case
and partially answers Question~9.5 in \cite{Pin1}. Note that the proof applies also in the linear  nonselfadjoint case.
  \begin{theorem}\label{thm:nonnegativeV}
    Let $V \geq 0$ be a Fuchsian-type potential with an isolated singularity at $\zeta = 0$ or $\zeta = \infty$. Assume that
for any sequence $R_n \to \zeta$ the limiting dilated equation satisfies $\resc^{\{R_n\}}(Q)(u) = -\pl(u)$ in $\mathbb{R}^d\sm \{0\}$.

Let $u$ be a positive solution of (\ref{eq:1}) near
$\zeta$. Then
$$
\lim_{x \to \zeta} u(x) \quad \mbox{exists}.
$$
The limit might be infinite.
  \end{theorem}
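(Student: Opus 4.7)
The plan is to combine a quasi-convexity property for the spherical maximum of $u$, arising from the sign condition $V \geq 0$, with the dilatation procedure and classical results for $p\,$-harmonic functions on $\mathbb{R}^d$. Assume without loss of generality that $\zeta = 0$; the case $\zeta = \infty$ is analogous. Set $M(r) := \sup_{S_r} u$ and $m(r) := \inf_{S_r} u$ for $r$ in a punctured neighborhood of $0$, both of which are continuous functions of $r$.

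First I would show that $L := \lim_{r \to 0^+} M(r)$ exists in $[0, +\infty]$. Since $V \geq 0$, any positive constant $c$ satisfies $Q'(c) = Vc^{p-1} \geq 0$, so $c$ is a supersolution of $Q'(w) = 0$. Applying the weak comparison principle (Theorem~\ref{thm:wcp}) to $u$ and the constant $\max(M(r_1), M(r_3))$ on the annulus $B_{r_3}\setminus\overline{B_{r_1}}$, one obtains
\begin{equation*}
M(r_2) \leq \max(M(r_1), M(r_3)) \qquad \text{for all } 0 < r_1 < r_2 < r_3 .
\end{equation*}
Thus $M$ is continuous and quasi-convex on a one-sided neighborhood of $0$, hence monotone near the endpoint, so $L$ exists. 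If $L = 0$ then $u \leq M(|x|) \to 0$; if $L = \infty$ then the Harnack inequality at scale $r$ (uniform in $r$ by the Fuchsian bound on $V$) gives $m(r) \asymp M(r) \to \infty$. In both extremal cases the theorem is proved.

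The main case is $L \in (0, \infty)$. Then $u$ is pinched between two positive constants on spheres near $0$. For any sequence $R_n \to 0$ the dilations $u_n(x) := u(R_n x)$ are uniformly bounded on every compact subset of $\mathbb{R}^d \setminus \{0\}$ and solve $-\pl u_n + V_{R_n} |u_n|^{p-2} u_n = 0$; by hypothesis $V_{R_n} \stackrel{*}{\rightharpoonup} 0$ in $L^\infty_{\mathrm{loc}}(\mathbb{R}^d \setminus \{0\})$. Standard $C^{1,\alpha}$ estimates and the stability argument used in the proof of Proposition~\ref{thm:dilatation} extract a subsequence with $u_n \to u_\infty$ locally uniformly in $\mathbb{R}^d \setminus \{0\}$, where $u_\infty$ is a positive, bounded $p\,$-harmonic function. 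The removable-singularity theorem for $p\,$-harmonic functions (Theorem~\ref{thm:asymp} for $p \leq d$, and a Serrin-type isolated-singularity argument at the origin together with Theorem~\ref{thm:asymp2} for $p > d$ via the Kelvin-type transform of Appendix~\ref{sec:asymp}) extends $u_\infty$ to a bounded positive $p\,$-harmonic function on $\mathbb{R}^d$, and the classical positive Liouville theorem \cite{Martio} forces $u_\infty \equiv c$ for some $c > 0$. The identification $c = L$ follows at once from $M(R_n) = \sup_{S_1} u_n \to \sup_{S_1} u_\infty = c$.

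To finish, given any sequence $x_k \to 0$, write $x_k = R_k \xi_k$ with $|\xi_k| = 1$, apply the previous step to $\{R_k\}$, and use local uniform convergence on the compact sphere $S_1$ to conclude $u(x_k) = u_k(\xi_k) \to c = L$. Hence $\lim_{x \to 0} u(x) = L$, finite and positive. The main obstacle is the third step, namely identifying the limiting dilated profile $u_\infty$ as a constant: this rests on removable-singularity plus positive Liouville theorems for $p\,$-harmonic functions, which are straightforward for $p \leq d$ but require more care for $p > d$. The case $\zeta = \infty$ is treated analogously, with $M$ quasi-convex on $[R_0, \infty)$ and the same dilatation–Liouville reduction at the end.
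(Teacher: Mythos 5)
Your proposal follows essentially the same route as the paper's proof: establish quasi-convexity (hence eventual monotonicity) of the spherical maximum $M(r)$ via the weak comparison principle against constant supersolutions (exactly what the paper does through Lemma~\ref{lem:mon} with $v=\mathbf{1}$, which is legitimate since $V\geq 0$ makes constants supersolutions), dispatch the extremal cases $L\in\{0,\infty\}$ by the uniform Harnack inequality, and in the main case pass to a limiting dilated profile $u_\infty$, show it is a constant bounded positive $p\,$-harmonic function on $\mathbb{R}^d\setminus\{0\}$, and conclude. The paper phrases the last step as a proof by contradiction (assuming $\lim m_r < \lim M_r$), whereas you identify the constant $c=L$ directly and then conclude $u(x_k)\to L$ for arbitrary $x_k\to\zeta$; these are equivalent.

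The only place that deserves a caveat is your justification that $u_\infty$ is constant when $p>d$. You invoke a removable-singularity argument at the origin, but for $p>d$ a bounded $p\,$-harmonic function on a punctured ball need \emph{not} extend to a weak $p\,$-harmonic function across the puncture: the fundamental solution $|x|^{(p-d)/(p-1)}$ is bounded near $0$, lies in $W^{1,p}_{\mathrm{loc}}$, yet carries a Dirac source at the origin. What actually saves the argument is that $u_\infty$ is bounded (and pinched between positive constants) on \emph{all} of $\mathbb{R}^d\setminus\{0\}$; the Liouville property for such functions is what is needed, and it is precisely what the paper asserts (without detail) here and records via the Kelvin transform in Lemma~\ref{eq:plw}. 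So your overall structure is sound and matches the paper, but the clause ``removable singularity then classical Liouville on $\mathbb{R}^d$'' should be replaced by a citation of the Liouville theorem for bounded $p\,$-harmonic functions on the punctured space (which for $p>d$ is not a consequence of a pointwise removability statement).
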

\begin{proof}
Without loss of generality assume that $\gz=\infty$. For $R_n \to \zeta$ set
$$m_R:=\inf_{x \in S_{R}} u(x),\;\; m_n :=m_{R_n}; \qquad M_R := \sup_{x \in S_{R}} u(x),\;\; M_n :=M_{R_n}.$$ By the maximum principle (cf. Lemma \ref{lem:mon}),  $M_R$ is a monotone function of $R$ for $R$ large enough. Hence, $\lim_{R \to \zeta} M_R$ exists and equals $M$, where $0\leq M\leq \infty$. If $M$
is infinite (respectively,  zero), then by the uniform Harnack inequality,  $\lim_{n \to \infty} m_n $ exists and is infinite (respectively, zero), and the claim follows.

Consequently, suppose that $u \asymp 1$ near $\gz$. We want to prove that for any sequence $\{R_n\}$, $\lim_{n \to \infty} m_n$ exists and is
equal to $M$. Assume to the contrary that there is a sequence $\{R_n\}$
such that $m:=\lim_{n \to \infty} m_n < M$.
Then $u_n(x) := u(R_n x)$, $n\geq 1$, is a uniformly bounded family that has a subsequence that converges  locally uniformly to a positive solution $u_\infty$ of the limiting dilated  equation $D^{\{R_n\}}(Q) w = 0$ in the punctured space. By our assumptions $u_\infty$ is
a bounded $p\,$-harmonic function in the punctured space, and therefore $u_\infty=\mathrm{constant}$. On the other hand, by our choice of $\{R_n\}$, we have
$$
 \inf_{x \in S_1} u_\infty(x) = \lim_{n \to \infty} m_n = m, \quad \mbox{and }\quad
 \sup_{x \in S_1} u_\infty(x) = \lim_{n \to \infty} M_n = M,
$$
which is a contradiction.
\end{proof}

   \vskip 3mm

\appendix
\renewcommand\thesection{\Alph{section}}
\section{Behavior of $p\,$-harmonic functions near infinity}
\label{sec:asymp}
In this Appendix we prove Theorem~\ref{thm:asymp2} concerning the asymptotic behavior of positive $p\,$-harmonic functions near infinity for $p\geq d$. To this end, we use a modification of the classical Kelvin transform $u(x)\mapsto  |x|^{2-d} u(x/|x|^{2})$ that preserves classical harmonic functions.
\begin{definition}\label{def:Kelvin}{\em 
For $x \in \mathbb{R}^d$ we denote by $\tilde{x}: = {x}/{|x|^2}$  the {\bf inverse point with respect to the unit ball} $B_1$. Let $u$ be a function defined either on the punctured unit ball or on $B_1^*$, the exterior of the unit ball. The {\bf modified Kelvin transform} of $u$ is defined by
$$
 K[u](x): = u({x}/{|x|^2})= u(\tilde{x}).
$$
 }
 \end{definition}
For $p=d$, the modified Kelvin transform is a conformal transformation, therefore, in this case Theorem~\ref{thm:asymp2} follows from Theorem~\ref{thm:asymp}.
The proof of Theorem~\ref{thm:asymp2} for $p > d$ consists of a sequence of lemmas. First we use the modified Kelvin transform to analyze the behavior of positive $p\,$-harmonic functions near infinity  from the behavior of positive solutions of a weighted $p\,$-Laplace equation near the origin. Then, following closely the proof of Serrin in \cite{Ser1}, we find the asymptotic of positive singular solutions near an isolated singular point.
\begin{lemma}\label{eq:plw}
Let $\beta:=2(p-d)$. Suppose that $u$ is a solution of the equation $-\pl (u) =0$ in a neighborhood of infinity (respectively, in a punctured neighborhood of the origin), and let $v:=K[u]$. Then
$v$ is a solution of the equation
\begin{equation}
-\diver(A [ v]):=-\diver( |x|^\beta |\grad v|^{p-2} \grad v) = 0
\end{equation}
in a punctured neighborhood of the origin (respectively, in a neighborhood of infinity).

In particular, any bounded solution of the equation $-\diver(A [ v])=0$ in the punctured space is a constant.
\end{lemma}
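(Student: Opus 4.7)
My plan is to verify the change-of-variables identity via the weak formulation, and then deduce the bounded Liouville statement by transferring it to a Liouville property for $p\,$-harmonic functions on the punctured space.

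For the first assertion, I would begin by computing the Jacobian of the inversion $\phi(x)=\tilde x=x/|x|^2$: a direct calculation gives $D\phi(x)=|x|^{-2}R(x)$, where $R(x):=I-2\hat x\hat x^T$ is an orthogonal reflection (so $R(x)^T R(x)=I$). Since $v(x)=u(\tilde x)$, the chain rule yields $\nabla v(x)=D\phi(x)^T\nabla u(\tilde x)=|x|^{-2}R(x)\nabla u(\tilde x)$, so $|\nabla v(x)|=|x|^{-2}|\nabla u(\tilde x)|$, and hence
\[
|\nabla v|^{p-2}\nabla v(x)=|x|^{-2(p-1)}\,R(x)\,|\nabla u(\tilde x)|^{p-2}\nabla u(\tilde x).
\]
To promote this pointwise identity to the PDE in the weak sense, let $\varphi$ be a test function compactly supported in a punctured neighborhood of the origin (say), and set $\psi(y):=\varphi(\tilde y)$, which is compactly supported in a neighborhood of infinity. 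The same Jacobian computation gives $\nabla\varphi(x)=|x|^{-2}R(x)\nabla\psi(\tilde x)$. Substituting into $\int |x|^{\beta}|\nabla v|^{p-2}\nabla v\cdot\nabla\varphi\,\mathrm{d}x$ and using $R^T R=I$, the integrand collapses to $|x|^{\beta-2p}|\nabla u(\tilde x)|^{p-2}\nabla u(\tilde x)\cdot\nabla\psi(\tilde x)$. Under the substitution $y=\tilde x$ one has $\mathrm{d}x=|y|^{-2d}\mathrm{d}y$ and $|x|^{\beta-2p}=|y|^{2p-\beta}$, so the net power of $|y|$ is $2p-\beta-2d$; this vanishes precisely when $\beta=2(p-d)$. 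The integral therefore reduces to $\int|\nabla u|^{p-2}\nabla u\cdot\nabla\psi\,\mathrm{d}y=0$, since $u$ is $p\,$-harmonic.

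For the bounded Liouville statement, let $v$ be a bounded weak solution of $-\diver(|x|^\beta|\nabla v|^{p-2}\nabla v)=0$ on $\mathbb{R}^d\setminus\{0\}$. By the first part of the lemma, $u:=K[v]$ is a bounded $p\,$-harmonic function on $\mathbb{R}^d\setminus\{0\}$; after translation I may assume $u>0$. When $p\leq d$, Theorem~\ref{thm:asymp} classifies the possible behavior at an isolated singularity as either removable or of unbounded fundamental-solution type, and boundedness excludes the latter; hence $u$ extends as a bounded positive $p\,$-harmonic function to all of $\mathbb{R}^d$, and the classical positive Liouville theorem for $p\,$-harmonic functions (\cite{Martio}) forces $u$, and therefore $v$, to be constant. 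When $p>d$, I would argue directly with the weighted equation. The radial ODE $(r^{d-1+\beta}|v'|^{p-2}v')'=0$ has only the two branches $v(r)=A r^{(d-p)/(p-1)}+B$; for $p>d$ the exponent is negative so $r^{(d-p)/(p-1)}$ blows up at the origin, and only constants are bounded on $(0,\infty)$. Combining the uniform Harnack inequality (Lemma~\ref{lem:BHI}) with the weak comparison principle on annuli $B_R\sm B_r$ and pinching $v$ between constant radial barriers as $r\to 0^+$ and $R\to\infty$ yields $v\equiv \mathrm{const}$.

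The change-of-variables step is essentially algebraic and is made transparent by the orthogonality of $R(x)$, which turns the otherwise ugly Jacobian factors into a single scalar power of $|x|$; verifying the exponent balance is then a one-line check. The genuine obstacle is the $p>d$ case of the Liouville statement: single points have positive $p\,$-capacity, so removability across the origin is not automatic and one cannot simply extend $u$ to a $p\,$-harmonic function on $\mathbb{R}^d$ and invoke the classical result. The radial ODE plus Harnack/comparison argument described above is tailored to bypass this difficulty, and is precisely the step where some care is required.
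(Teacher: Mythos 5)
Your derivation of the transformation identity is correct, and it takes a genuinely different (and, to my mind, cleaner) route than the paper's. The paper verifies the identity in the strong form: it expands $\diver(A[v])$ with explicit index gymnastics, collects four groups of terms, observes that one group cancels because $-(2d-2)+2(2d-4)+6-2d=0$, and identifies the remaining terms with $|\tilde x|^{2d}\,\tilde\Delta_p u(\tilde x)$. You instead work in the weak formulation and exploit the algebraic structure of the inversion: $D\phi(x)=|x|^{-2}R(x)$ with $R(x)=I-2\hat x\hat x^{T}$ orthogonal, so both $\nabla v$ and $\nabla\varphi$ pick up the same scalar factor $|x|^{-2}$ and the same reflection, and $R^{T}R=I$ turns the whole integrand into a scalar power $|x|^{\beta-2p}$ times the $p$-Laplacian integrand evaluated at $\tilde x$. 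The Jacobian $|y|^{-2d}\mathrm{d}y$ then forces exactly $\beta=2(p-d)$. This reformulation hides all the index bookkeeping inside the single orthogonality statement and makes the choice of $\beta$ transparent; it also directly certifies the weak-solution statement the lemma actually claims. Both proofs are valid; yours is shorter and conceptually sharper.

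On the ``in particular'' clause, your caution about $p>d$ is well-placed and actually probes a spot where the paper is very terse: the paper disposes of this clause with one sentence (``it follows from the first part, since the Liouville theorem holds for bounded $p$-harmonic functions in $\mathbb{R}^d$''), but the Kelvin transform only produces a bounded $p$-harmonic function on the \emph{punctured} space, and for $p>d$ a puncture has positive $p$-capacity, so one cannot simply remove the singularity and cite the entire-space Liouville theorem (indeed $|x|^{(p-d)/(p-1)}$ is bounded, continuous, and $p$-harmonic on $B_1\sm\{0\}$ but not $p$-harmonic across the origin). Your treatment of $p\le d$ via Theorem~\ref{thm:asymp} and removability is correct. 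Your $p>d$ sketch is on the right track but is under-specified at one point: ``pinching $v$ between constant radial barriers'' only yields the maximum principle, which does not by itself rule out $v$ having different limits at $0$ and at $\infty$. What is actually needed is the full family of radial solutions $A|x|^{\alpha}+B$ with $\alpha=(d-p)/(p-1)<0$ that your ODE computation produced: using these as supersolutions (first letting $r\to 0$, $R\to\infty$, then $A\to 0$) pins the limit of $v$ at the origin to the limit at infinity, and applying the same argument to $\sup v - v$ gives the matching lower bound, after which $v$ is constant. You should also say a word about why the uniform Harnack inequality applies to the weighted operator (the weight $|x|^{\beta}$ is an $A_p$-weight in the relevant range, so the machinery of~\cite{Martio} is available). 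With these two points filled in, your version of the Liouville clause is more complete than the paper's.
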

\begin{proof}
An elementary computation shows that
\newcommand{\gsu}{\tilde{\grad} u(\tilde x)}
\newcommand{\gsui}[1]{\tilde{\grad}_#1 u(\tilde x)}
\newcommand{\gst}[1]{\tilde{\grad}_#1}
\begin{eqnarray}
\grad_i v(x) &=& \gst{j} u(\tilde x) \left(\frac{\delta_{ij}}{|x|^2} - 2\frac{x_i x_j}{|x|^4} \right) = \gsui{i} |\tilde x|^2 - 2 \gsui{j}  \tilde x_j \tilde x_i,
\nonumber \\
|\grad v(x)|^2 &=& |\gsu|^2 \frac{1}{|x|^4} = |\gsu|^2 |\tilde x|^4 \nonumber,
\end{eqnarray}
here and below we sum over repeated indices and use $\tilde{\grad}$ to denote
gradient with respect to $\tilde x$.
Accordingly,
\begin{align*}
\diver( A[v] )
&= \diver \left[|x|^{2 - 2d} |\gsu|^{p-2}\left( \gsu -2 \gsui{j} x_j \frac{x}{|x|^2}\right) \right] \\
&= \diver \left[ |\tilde x|^{2d-2} |\gsu|^{p-2}\left( \gsu -2 \gsui{j} \frac{\tilde x_j}{|\tilde x|^2} \tilde x \right) \right]  \\
&= \gst{i} \!\left[\!|\tilde x|^{2d\!-\!2} |\gsu|^{p\!-\!2}\!\left(\! \gsui{j} \!-\!2 \gsui{k} \frac{\tilde x_k}{|\tilde x|^2} \tilde x_j\right)\! \right]\!\! \left(\delta_{ij} |\tilde x|^2 \!-\! 2 \tilde x_i \tilde x_j\!\right).
\end{align*}
Expanding the gradient in the last line yields
\begin{multline*}
\diver(A[v]) = \\ \left[\! \!(2d\! -\!2) |\tilde x|^{2d\! -\! 4} \tilde x_i |\gsu|^{p\!-\!2} \gsui{j}
\!-\! 2 (2d\!-\!4) |\tilde x|^{2d\! -\! 6} \tilde x_i |\gsu|^{p\!-\!2} \gsui{k} \tilde x_k \tilde{x}_j \right.\\
+|\tilde x|^{2d-2} \gst{i} \left(|\gsu|^{p-2} \gsui{j}\right) - 2 |\tilde{x}|^{2d-4} \gst{i} \left(|\gsu|^{p-2} \gsui{k}\right) \tilde{x}_k \tilde{x}_j \\
\left. -2 |\tilde x|^{2d -4} |\gsu|^{p-2} \gsui{k} \left(\delta_{ki} \tilde x_j +
\tilde x_k \delta_{ij}\right) \right] (\delta_{ij}|\tilde x|^2 - 2 \tilde x_i \tilde x_j).
\end{multline*}
By collecting the terms of the first and the last lines of the right hand side of the latter equation, we get
\begin{align}\label{eq:divA}
\diver(A[v]) =& |\tilde{x}|^{2d-2} |\gsu|^{p-2} \tilde{x}_i \gsui{i}
(-(2d -2) + 2(2d-4) + 6 -2d) \nonumber \\
&+|\tilde x|^{2d-2} \gst{i} \left(|\gsu|^{p-2} \gsui{j}\right)(\delta_{ij}|\tilde x|^2 - 2 \tilde x_i \tilde x_j) \\
&- 2 |\tilde{x}|^{2d-4} \gst{i} \left(|\gsu|^{p-2} \gsui{k}\right) \tilde{x}_k \tilde{x}_j(\delta_{ij}|\tilde x|^2 - 2 \tilde x_i \tilde x_j).\nonumber
\end{align}
Note that the right hand side of the first line of \eqref{eq:divA} equals zero, while the last two lines of \eqref{eq:divA} give us $$
\diver(A[v]) = |\tilde x|^{2d-2} \gst{i} \left(|\gsu|^{p-2} \gsui{j}\right) \delta_{ij}|\tilde x|^2 = |\tilde x|^{2d} \tilde \laplace_p u(\tilde x),
$$
which is equal to zero by our assumption.

Since the Liouville theorem holds true for bounded $p\,$-harmonic functions in $\mathbb{R}^d$, it follows from the first part of the proof that any bounded solution of the equation $-\diver(A [ v])=0$ in the punctured space is a constant.
\end{proof}
Denote $\alpha:=(d-p)/(p-1)$.  It is well known that $|x|^{-\alpha}$ is
a positive radial solution of the $p\,$-Laplace equation in the punctured space. By Lemma~\ref{eq:plw},  the function $|x|^\alpha$
is a solution of the weighted equation $-\diver(A [ v])=0$ in the punctured space. It is useful to keep in mind that for $p>d$, $\beta$ is positive and $\alpha$ is negative.

The following lemma claims that $\zeta=\infty$ is a regular point with respect to the constant function and the $p\,$-Laplace equation.
\begin{lemma}\label{eq:plw1}
Let $u$ be a positive solution of the equation $-\pl (u) =0$ in a neighborhood of infinity, where $1<p<\infty$.   If $u$ does not admit a finite limit as $x\to\infty$, then
$$\lim_{x \to \infty} u(x) = \infty.$$
 Moreover, if $p\geq d$, then $\lim_{x \to \infty} u(x)\neq 0$.
\end{lemma}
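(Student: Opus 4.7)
The plan is to prove the two assertions separately. For the existence of a generalized limit, I would first set $M(r) := \sup_{|x|=r} u(x)$ and $m(r) := \inf_{|x|=r} u(x)$ for $r$ large, and apply the weak comparison principle (Theorem~\ref{thm:wcp}) between $u$ and constant functions on annuli $\{r_1 < |x| < r_3\}$ to obtain $M(r_2) \leq \max(M(r_1), M(r_3))$ and $m(r_2) \geq \min(m(r_1), m(r_3))$. Thus $M$ is quasi-convex and $m$ is quasi-concave, both eventually monotone, so $M_\infty := \lim_{r\to\infty} M(r)$ and $m_\infty := \lim_{r\to\infty} m(r)$ exist in $[0, \infty]$. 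The uniform Harnack inequality (Lemma~\ref{lem:BHI}), applied to $u$ and the constant $1$, gives $M(r) \asymp m(r)$, so the two limits are simultaneously zero, positive and finite, or infinite; in the first and third cases the limit of $u$ is immediately $0$ or $\infty$.

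The nontrivial case is $0 < m_\infty \leq M_\infty < \infty$, which I would handle by a dilatation argument. Setting $u_R(x) := u(Rx)$, the family $\{u_R\}$ is uniformly bounded above and below on compact subsets of $\mathbb{R}^d \setminus \{0\}$ as $R \to \infty$; standard $C^{1,\alpha}$ estimates provide a subsequence converging locally uniformly to a bounded positive $p\,$-harmonic function $u_\infty$ on $\mathbb{R}^d \setminus \{0\}$. Next I would show that $u_\infty$ is a constant: for $1 < p \leq d$, the boundedness of $u_\infty$ at the isolated point $0$ makes the singularity removable (cf.~Theorem~\ref{thm:asymp}), yielding a bounded $p\,$-harmonic function on all of $\mathbb{R}^d$, which is constant by the classical Liouville theorem; for $p > d$, the modified Kelvin transform converts $u_\infty$ into a bounded solution of $-\diver(|x|^\beta |\grad v|^{p-2} \grad v) = 0$ on $\mathbb{R}^d \setminus \{0\}$, and Lemma~\ref{eq:plw} directly yields constancy. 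Constancy $u_\infty \equiv c$ forces $M_\infty = m_\infty = c$, whence $u(x) \to c$.

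For the second assertion I would argue by contradiction, assuming $p \geq d$ and $\lim_{x \to \infty} u(x) = 0$. Set $m_0 := \inf_{|x|=R_0} u > 0$. For each $R' > R_0$ I would compare $u$ on the annulus $\{R_0 < |x| < R'\}$ with the nonnegative radial $p\,$-harmonic function
\[
\phi_{R'}(x) :=
\begin{cases}
\dfrac{m_0}{R'^{\gamma} - R_0^{\gamma}} \bigl( R'^{\gamma} - |x|^{\gamma} \bigr), & p > d,\ \gamma := (p-d)/(p-1), \\[2mm]
m_0\, \dfrac{\log(R'/|x|)}{\log(R'/R_0)}, & p = d.
\end{cases}
\]
In both cases $\phi_{R'} \geq 0$ on the closed annulus with $\phi_{R'}(R_0) = m_0 \leq u$ and $\phi_{R'}(R') = 0 \leq u$, so the weak comparison principle gives $u \geq \phi_{R'}$ throughout. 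Fixing $x$ with $|x| > R_0$ and letting $R' \to \infty$, an elementary computation shows $\phi_{R'}(x) \to m_0$, whence $u(x) \geq m_0 > 0$, contradicting $u(x) \to 0$. The main obstacle will be the constancy step for $p > d$ in the first assertion, since bounded $p\,$-harmonic functions on $\mathbb{R}^d \setminus \{0\}$ need not extend across the origin (the $p\,$-capacity of a point is positive); this is precisely what the earlier Liouville-type Lemma~\ref{eq:plw} for the weighted equation is designed to overcome.
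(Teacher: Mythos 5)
Your argument is correct, but it diverges from the paper's proof in both halves, and in the first half it is considerably heavier than necessary. The paper does not dilate at all: after observing (via Lemma~\ref{lem:mon} with $v=\mathbf{1}$) that $m_r$, $M_r$ are eventually monotone, it exploits the special feature that the $p\,$-Laplacian has \emph{no} potential term, so $u-m+\varepsilon$ is again a positive $p\,$-harmonic function near infinity for every $\varepsilon>0$. The uniform Harnack inequality applied to this translate gives $M_r-m+\varepsilon \leq C(m_r-m+\varepsilon)$, and letting first $r\to\infty$ and then $\varepsilon\to 0$ yields $M=m$ in one line. This completely sidesteps the step you flag as the main obstacle: you are forced to prove that a bounded positive $p\,$-harmonic function on $\mathbb{R}^d\setminus\{0\}$ is constant, which for $p>d$ is genuinely delicate (the origin has positive $p\,$-capacity, so removability fails), and the reference you lean on — the final sentence of Lemma~\ref{eq:plw} — is itself justified in the paper only by a quick appeal to Liouville in $\mathbb{R}^d$, which, as you yourself note, does not directly apply to the punctured space. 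Your dilatation argument is sound provided one accepts that statement, but the paper's proof is both shorter and self-contained, and this is worth internalizing: whenever the zeroth-order term vanishes, translating by a constant is the cheapest way to turn ``quotients tend to a limit'' into ``the function tends to a limit.''

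For the second assertion your barrier comparison with $\phi_{R'}$ is correct and pleasantly elementary: the explicit radial $p\,$-harmonic profiles vanish at $S_{R'}$, equal $m_0$ on $S_{R_0}$, and tend to $m_0$ pointwise as $R'\to\infty$ precisely because $(p-d)/(p-1)\geq 0$, and the WCP then gives $u\geq m_0$. The paper instead invokes criticality of $-\Delta_p$ in $\mathbb{R}^d$ for $p\geq d$ (the constant is the ground state, hence of minimal growth at infinity), which is more structural but requires the criticality machinery of \cite{Pin2}. Either route is fine; yours has the advantage of being verifiable by hand, while the paper's makes the link to the ground-state alternative explicit. Note also that your comparison argument actually reproves, en route, the exact ingredient needed to close the circularity in the constancy step above, so if you do keep the dilatation route, you should perform the barrier comparison \emph{first} and then deduce constancy of $u_\infty$ from it rather than from Lemma~\ref{eq:plw}.
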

\begin{proof}
By Lemma~\ref{lem:mon} (with $v=\mathbf{1}$),  for $r$ large enough, the functions
$$
m_r := \inf_{x\in S_r} u(x),\, \quad M_r:=\sup_{x\in S_r} u(x)
$$
are monotone. Suppose that $m= \lim_{r \to \infty} m_r$ is finite.
Then for any $\varepsilon > 0$ the function $u - m +\varepsilon$
is a positive $p\,$-harmonic function in some neighborhood
of infinity. By the uniform Harnack inequality, for $r$ large we have
$$M_r - m + \varepsilon < C(m_r - m +\varepsilon)$$
and by taking $r \to \infty$ we get
$$0\leq M - m \leq C\varepsilon.$$
Hence $M=m<\infty$, and $\lim_{|x| \to \infty}u(x)=m=M<\infty$. Therefore,  $u$ admits a finite limit as $x\to\infty$.

We note that for $p\geq d$ this finite limit is in fact positive.  Indeed, in this case, the equation $-\pl (u) =0$ in $\mathbb{R}^d$ is critical  \cite{Pin2}, and hence,  the positive constant function is its ground state. In particular,  the constant function has minimal growth at infinity. So, for $p\geq d$, there is no positive $p\,$-harmonic function in an exterior domain that tends to zero at infinity.
\end{proof}
\begin{remark}\label{rem:pgeqd}{\em 
Using the ``fundamental solution" of the $p\,$-Laplacian in $\mathbb{R}^d$
 \begin{equation}\label{eq:fundemtal}
    \mu_p(x):=\begin{cases}
C(d,p)|x|^{(p-d)/(p-1)}    & \text{ if } p\neq d, \\[2mm]
    C(d)\log(|x|) & \text{ if } p=d,
  \end{cases}
 \end{equation}
  and a simple comparison argument, it can be shown that a positive solution of the equation $-\pl (u) =0$ in a neighborhood of infinity satisfying $\lim_{x \to \infty} u(x) = \infty$ exists if and only if $p\geq d$. For an extension of this result see \cite{MY3}.
 }
\end{remark}
\begin{lemma}\label{thm:la}
For $\!p>\!d$, let $u$ be a positive solution of the equation $-\pl (u) \!=\!0$ in a neighborhood of infinity, satisfying $\lim_{x\to\infty} u(x)=\infty$. Fix $R>0$ and  $c>0$ such that $v_c(x):=K[u](x)-c$ is positive near the origin and negative on $S_R$.  For any $\theta\in C^1_0(B_R)$ which is identically $1$ near the origin we have
$$
\int_{B_R} \nabla \theta \cdot A[v_c] \,\mathrm{d}x= k,
$$
where $k$ is a positive constant independent of $\theta$.
\end{lemma}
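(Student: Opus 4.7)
The starting point is to verify that $v_c = K[u] - c$ is a weak solution of the weighted equation
\[
-\mathrm{div}\bigl(A[v_c]\bigr) = -\mathrm{div}\bigl(|x|^\beta |\nabla v_c|^{p-2}\nabla v_c\bigr) = 0 \qquad \text{in } B_R \setminus \{0\}.
\]
This follows from Lemma \ref{eq:plw} applied to $v = K[u]$, together with the observation that the operator $A[\cdot]$ depends only on the gradient, so subtracting the constant $c$ preserves the equation. Note also that the assumption $\lim_{x\to\infty}u(x) = \infty$ translates, under the modified Kelvin transform, into $v(x)=u(\tilde x)\to\infty$ as $x\to 0$, so $v_c$ is indeed positive (and singular) near the origin and, by construction, negative on $S_R$. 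The integral in the statement is finite because $\nabla\theta$ vanishes in a neighborhood of the origin and $A[v_c] \in L^{p/(p-1)}_{\mathrm{loc}}(B_R \setminus \{0\})$.

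The first main step is to show independence of the integral on $\theta$. Given two admissible test functions $\theta_1, \theta_2 \in C^1_0(B_R)$ with $\theta_i \equiv 1$ on some neighborhood of $0$, their difference $\theta_1 - \theta_2$ vanishes in a neighborhood of the origin and on $\partial B_R$. Hence $\theta_1 - \theta_2 \in C^1_0(B_R \setminus \{0\})$ and may be used as an admissible test function in the weak formulation of $-\mathrm{div}(A[v_c])=0$, yielding
\[
\int_{B_R}\nabla(\theta_1 - \theta_2)\cdot A[v_c]\,\mathrm{d}x = 0.
\]
Thus the integral depends only on the equation and on $R,c$, defining the number $k$ in the statement.

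The second main step, which is where the real work lies, is showing that $k > 0$. By the independence, I may compute $k$ using any admissible cutoff, so I will choose one tailored to the super-level sets of $v_c$. For $M$ large enough that $\{v_c > M\} \Subset B_R$ (such $M$ exists since $v_c \to \infty$ at $0$ and $v_c < 0$ on $S_R$) and for small $\varepsilon > 0$, set
\[
\theta_{M,\varepsilon}(x) := \min\!\Bigl\{\frac{(v_c(x)-M)_+}{\varepsilon},\,1\Bigr\}.
\]
This function is Lipschitz, compactly supported in $B_R$, and identically $1$ on a neighborhood of the origin. Mollifying $\theta_{M,\varepsilon}$ at a scale smaller than the distance from $\{v_c = M\}$ to $\{v_c = M+\varepsilon\}$ produces admissible $C^1_0(B_R)$ functions that agree with it outside a neighborhood of these two level sets, and a routine passage to the limit (using $A[v_c]\in L^{p/(p-1)}_{\mathrm{loc}}$ and the fact that the mollifications converge in $W^{1,p/(p-1)}$) shows that their integrals converge to the Lipschitz integral. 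A direct computation using $\nabla\theta_{M,\varepsilon}=\varepsilon^{-1}\nabla v_c$ on $\{M < v_c < M+\varepsilon\}$ then gives
\[
k = \frac{1}{\varepsilon}\int_{\{M<v_c<M+\varepsilon\}} |x|^\beta\,|\nabla v_c|^p\,\mathrm{d}x \geq 0,
\]
and the inequality is strict because $v_c$ is continuous and takes all values in $(M,M+\varepsilon)$ on this open set, so $\nabla v_c$ cannot vanish identically there.

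The expected main obstacle is the positivity step. The independence part is essentially a tautology from the weak formulation, but the strict positivity requires choosing a specific, \emph{non-smooth}, cutoff adapted to the geometry of $v_c$ and then justifying that it still represents $k$, which forces the mollification argument above. A subtle point along the way is that the weak-solution property for $v_c$ must be combined with the fact that $v_c$ is singular at the origin (so that any admissible $\theta$ actually ``captures'' the singularity and does not trivialize $k$), and this is precisely what enters through the requirement that $\{v_c > M\}$ be compactly contained in $B_R$ for all $M$ arbitrarily large.
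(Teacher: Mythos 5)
Your proof is correct and takes essentially the same route as the paper: independence of $\theta$ follows from the weak formulation of $-\diver(A[v_c])=0$ away from the origin, and positivity of $k$ is obtained by evaluating the integral with a Lipschitz cutoff adapted to the level sets of $v_c$. The only cosmetic difference is that the paper uses the truncation of $v_c$ between levels $0$ and $1$ (giving $k=\int_{\{0<v_c<1\}}|x|^\beta|\nabla v_c|^p\,\mathrm{d}x$ directly, without the $\varepsilon^{-1}$ factor) whereas you use the thin slab $\{M<v_c<M+\varepsilon\}$; your mollification remark is a reasonable way to handle the (unaddressed in the paper) fact that these cutoffs are not $C^1$.
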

\begin{proof}
 Note that the difference of any two such $\theta$ has a compact support in $B_R\sm \overline{B_\varepsilon}$ for some $\varepsilon>0$. Since  $v_c$ is a solution of the equation $-\diver(A [u])=0$ in $B_R\sm \overline{B_\varepsilon}$, it follows that
 $$\int_{B_R} \nabla \theta \cdot A[v_c] \,\mathrm{d}x= \mathrm{constant}=k .$$ Therefore, it remains to prove that $k>0$. Let
$$
\theta(x) := \left\{
\begin{array}{lr}
1 & v_c(x) \geq 1, \\
v_c(x) & 0<v_c(x) < 1, \\
0 & v_c(x) \leq 0.
\end{array} \right.
$$
Since $v_c$ is not a constant it follows that
$$
k=\int_{B_R} \nabla \theta \cdot A[v_c] \,\mathrm{d}x =
\int_{\{x\in B_R\mid 0<v_c(x)<1\}} |x|^\beta |\grad v_c|^p  \,\mathrm{d}x  >0.
$$
\end{proof}

The following lemma can be found in \cite{Martio} (see Theorem 7.41 therein). However, we
include the proof for the completeness.
\begin{lemma}\label{thm:la1}
Let $p>d$, and let $v_c(x)$ be the solution in Lemma~\ref{thm:la}. Then there exists $\varepsilon>0$ such that \begin{equation}\label{eq:la1}
v_c(x) \asymp |x|^\alpha \qquad \mbox{ in } B_\varepsilon\sm \{0\}.
\end{equation}
\end{lemma}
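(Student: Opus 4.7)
The plan is to follow the classical Serrin analysis of isolated singularities \cite{Ser1} (see also Heinonen--Kilpel\"{a}inen--Martio \cite{Martio}), adapted to the weighted $p\,$-Laplace equation $-\diver(A[v])=0$ from Lemma~\ref{eq:plw}. The crucial observation is that $|x|^\alpha$ (with $\alpha=(d-p)/(p-1)<0$ for $p>d$) is a positive radial solution of the weighted equation---obtained by applying the modified Kelvin transform to the $p\,$-harmonic fundamental solution $|x|^{-\alpha}$---and thus plays the role of the fundamental solution of $-\diver(A[v])=0$ at the origin. The positive ``charge'' $k$ in Lemma~\ref{thm:la} will quantify the nonremovable singularity of $v_c$ and calibrate it to the correct multiple of this fundamental solution.

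First I would show that $v_c$ is unbounded near the origin. Otherwise, by the weighted-removable-singularity theory (applicable since for $p>d$ the weight $|x|^\beta$ with $\beta=2(p-d)>0$ is a Muckenhoupt $A_p$-weight under which a point has zero weighted $p\,$-capacity), $v_c$ would extend as a solution of $-\diver(A[v])=0$ across $0$, so that the weak formulation forces $\int_{B_R}\nabla\theta\cdot A[v_c]\,\mathrm{d}x=0$, contradicting $k>0$. The uniform Harnack inequality on dyadic annuli (valid because $|x|^\beta$ is bounded above and below on each such annulus, and the spatial scaling $v\mapsto v(r\cdot)$ makes the constant $r$-independent) then yields $v_c\to\infty$ at $0$ with $M_r:=\sup_{S_r}v_c\asymp\inf_{S_r}v_c$ uniformly for small $r$.

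For the two-sided bound $v_c(x)\asymp|x|^\alpha$ I would compare $v_c$ with radial barriers $C_\pm |x|^\alpha$ on annuli $A(r,r_0)=B_{r_0}\setminus\overline{B_r}$ via the weak comparison principle, which applies on each such annulus since the equation is uniformly elliptic there. Choosing the outer boundary data on $S_{r_0}$ to match (or to be matched by) the barrier is straightforward; the nontrivial step is establishing the growth bound $M_r\leq C\,r^\alpha$ (and the matching lower bound $\inf_{S_r}v_c\geq c\,r^\alpha$) so that the barriers also dominate/subordinate $v_c$ on the inner sphere $S_r$ as $r\to 0$. This I would obtain by Serrin-type Moser iteration: testing the weak equation against $\eta^p f(v_c)$ for a suitable cutoff $\eta$ and nonlinear $f$, and invoking weighted Sobolev--Poincar\'{e} inequalities compatible with the $A_p$-weight $|x|^\beta$, produces an $L^\infty$-estimate whose scaling matches the exponent $\alpha$ of the fundamental solution; the positivity of the charge $k$ in Lemma~\ref{thm:la} is then what prevents $v_c$ from decaying faster than $|x|^\alpha$ on small spheres (else its total flux at $0$ would vanish), and supplies the companion lower bound. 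The hard part will be this weighted Moser iteration: without the growth estimate $M_r\leq Cr^\alpha$ the WCP comparison cannot be closed at the inner boundary, and adapting Serrin's iteration to the specific weight $|x|^\beta$---though routine given the $A_p$ structure and paralleling Theorem~7.41 of \cite{Martio}---is the technical heart of the proof.
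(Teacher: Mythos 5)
Your proposal diverges substantially from the paper's argument, and it contains a genuine gap at its self-described ``technical heart.'' The paper does \emph{not} use Moser iteration, a barrier comparison, or $A_p$-Muckenhoupt machinery. Instead it extracts both bounds directly from the flux identity $k\mu = \int_{B_R}\nabla\theta_\mu\cdot A[v_c]\,\mathrm{d}x$ of Lemma~\ref{thm:la}, applied with two well-chosen test functions: for the upper bound $m_r\leq Cr^\alpha$ it tests with the truncation $\theta_{m_r}$ of $v_c$ itself, bounds the resulting Dirichlet integral below by $m_r^p\,\mathrm{cap}_{p,\beta}(B_{r,R})$, and invokes the explicit weighted capacity formula from \cite{Martio}; for the lower bound $r^\alpha\leq CM_r$ it tests with the radial profile $\xi_r$ built from $|x|^\alpha$, uses H\"older, and bounds the remaining factor $\int_{B_R\setminus B_r}|\nabla v_c|^p|x|^\beta$ by $kM_r$ via another truncation. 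Uniform Harnack (Lemma~\ref{lem:BHI}) then upgrades these sphere-wise estimates to the two-sided pointwise bound.

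The gap in your plan is the claim that a weighted Moser iteration ``produces an $L^\infty$-estimate whose scaling matches the exponent $\alpha$.'' Moser iteration is scale-invariant; it yields Harnack-type inequalities $\sup_{A_r} v_c \leq C\,\mathrm{avg}_{A_r} v_c$ on annuli, but it cannot by itself select the power law $r^\alpha$. The exponent is calibrated only by the nonzero flux $k$ through a capacity or fundamental-solution test-function computation, which is exactly the step your sketch hands off to Moser iteration without executing. You do mention that $k>0$ ``prevents $v_c$ from decaying faster than $|x|^\alpha$,'' which is the right intuition for the lower bound, but you never explain how $k$ enters the upper bound $M_r\leq Cr^\alpha$---and in the paper it does, via the capacity estimate. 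A secondary issue is that your barrier-comparison step is circular: once you have $m_r\gtrsim r^\alpha$ and $M_r\lesssim r^\alpha$ on spheres together with uniform Harnack, the conclusion $v_c\asymp |x|^\alpha$ already follows, and no WCP comparison with $C_\pm|x|^\alpha$ on annuli is needed. Your preliminary step (removable singularity theory plus $k>0$ implies $v_c$ unbounded) is sound in spirit, but the paper bypasses it: unboundedness of $v_c$ at the origin is built into the setup of Lemma~\ref{thm:la} (via Lemma~\ref{eq:plw1} one already knows $\lim_{x\to 0}v_c(x)=\infty$).
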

\begin{proof} We denote by $C$ a generic positive constant whose
value might change from line to line but depends only on $p$ and $d$.
For $0<r<R$ let
$$
m_r= \inf_{x\in S_r} v_c(x), \qquad M_r=\sup_{x\in S_r} v_c(x).
$$
In light of Lemma~\ref{lem:mon}, and since $\lim_{x\to 0}v_c(x)=\infty$, we may assume that $m_r,\,M_r$ are nondecreasing as $r \to 0$.
It suffices to prove that there exist positive constants $C$ and $ C_1$  depending only on $p$ and $d$ such that
$$m_r \leq C r^\alpha \leq C_1M_r \qquad \mbox{ for all }  0<r<r_0,$$
and then estimate \eqref{eq:la1} will follow using
the uniform Harnack inequality (Lemma~\ref{lem:BHI}).

For $\mu>0$ let
$$
\theta_\mu (x) := \left\{
\begin{array}{lr}
\mu & \quad v_c(x) \geq \mu, \\
v_c(x) &  \quad 0<v_c(x) < \mu, \\
0 &  \quad v_c(x) \leq 0.
\end{array} \right.
$$
Note that $\theta_\mu(x) = \mu$ near the origin, therefore, by Lemma~\ref{thm:la} we have
\begin{equation}\label{eq:kmu}
k \mu =\int_{B_R} \nabla \theta_\mu \cdot A[v_c] \,\mathrm{d}x.
\end{equation}

\noindent \textbf{Upper bound:}
By \eqref{eq:kmu} we have
$$
km_r = \int_{B_R} \nabla \theta_{m_r} \cdot A[v_c] \,\mathrm{d}x = C\int_{B_R} |x|^\beta |\grad \theta_{m_r}|^p \,\mathrm{d}x \geq C_1m_r^p\, \mathrm{cap}_{p,\,\beta}(B_{r,R}),
$$
where $\mathrm{cap}_{p,\,\beta}(B_{r,R})$ is the (variational) weighted $p\,$-capacity of the ball $B_r$ in $B_R$ with respect to measure $|x|^\beta$
(see, \cite{Martio}, Chapter 2). Explicit calculation \cite{Martio} (see Example~2.22 therein), shows that
$$
\mathrm{cap}_{p,\,\beta}(B_{r,R}) = C\left( r^{(p-d-\beta)/(p-1)} - R^{(p-d-\beta)/(1-p)}
\right)^{1-p}.
$$
Thus,
$$
k m_r^{1-p} \geq C \left( r^{(p-d-\beta)/(p-1)} - R^{(p-d-\beta)/(1-p)}
\right)^{1-p}.
$$
Note that $(p-d-\beta)/(p-1)=\alpha$, therefore,
$$
m_r \leq C\left( r^\alpha - R^\alpha \right) \leq C r^\alpha.
$$
\vskip 3mm
\noindent \textbf{Lower bound:}
For $0<r <R$, let
$$
\xi_r(x):=\left\{\begin{array}{lr}
    1 &  \quad |x| < r,\\[3mm]
    \dfrac{|x|^\alpha -R^\alpha}{r^\alpha - R^\alpha} &  \quad r \leq |x| \leq R, \\[3mm]
    0&  \quad |x|>R.
    \end{array} \right.
$$
 Using Lemma~\ref{thm:la} and H\"older's inequality, we obtain
\begin{equation}\label{eq:14}
k\!=\!\!\int_{B_R} \!\!\!\!\nabla \xi_r \cdot A[v_c] \mathrm{d}x \leq \!\left(\!\int_{B_R\sm B_r} \!\!\!|\nabla \xi_r|^p |x|^\beta \mathrm{d}x\right)^{\!1/p}\!\!\!
\left(\! \int_{B_R\sm B_r}\!\!\! |\grad v_c|^p |x|^\beta \mathrm{d}x \right)^{\!\!(p-1)/p}\!\!\!.
\end{equation}
For the first term in the right hand side of \eqref{eq:14} we have
\begin{multline*}
\int_{B_R\sm B_r} |\nabla \xi_r|^p |x|^\beta \,\mathrm{d}x
 = \frac{C}{(r^\alpha -R^\alpha)^p}
\left( r^{(\alpha -1)p + \beta + d} - R^{(\alpha -1)p + \beta + d} \right)\\
=\frac{C}{(r^\alpha - R^\alpha)^{p-1}}\;,
\end{multline*}
where we used the equality $(\alpha -1)p + \beta + d = \alpha$.
Consequently, for $r$ small we have
\begin{equation}\label{eq:1rhs}
\int_{B_R\sm B_r} |\nabla \xi_r|^p |x|^\beta \,\mathrm{d}x
 \leq C r^{-\alpha (p-1)}.
\end{equation}
To estimate the second term of (\ref{eq:14}), we note that $v_c = \theta_{M_r}$
in $\{0 \leq v_c \leq M_r\} \supset B_R \sm B_r$, and consequently
\begin{equation}\label{eq:2rhs}
\int_{B_R \sm B_r}\!\!\! |\grad v_c|^p |x|^\beta  \mathrm{d}x
\leq \!\!\int_{0 \leq v_c \leq M_r}\!\!\! |\grad v_c|^p |x|^\beta  \mathrm{d}x
= \!\!\int_{B_R}\!\!\! \grad \theta_{M_r} \cdot A[v_c] \, \mathrm{d}x = k M_r.
\end{equation}
Combining \eqref{eq:14}, \eqref{eq:1rhs} and \eqref{eq:2rhs} we get
\begin{equation*}
k = \int_{B_R} \nabla \theta_r \cdot A[v_c] \,\mathrm{d}x \leq C  r^{\alpha(1-p)/p}\,M_r^{(p-1)/p}
\end{equation*}
and $ r^\alpha \leq C M_r$ follows.
\end{proof}
With the work of the preceding lemmas available, we can finish the proof of Theorem~\ref{thm:asymp2} concerning the asymptotic behavior of positive singular $p\,$-harmonic functions near infinity for $p > d$.
\begin{proof}[{End of the proof of Theorem~\ref{thm:asymp2}}]
 In light of Lemma~\ref{eq:plw}, Lemma~\ref{eq:plw1}, and Lemma~\ref{thm:la1}, we need only to show that $v(x) \asymp |x|^\alpha$ in $B_\varepsilon \sm \{0\}$ implies that $v(x) \sim |x|^\alpha$ as $x\to 0$.
We use V\'{e}ron's scaling method \cite{Pin2}.

For $0<\sigma <1$, we denote $w_\sigma(x) := v(\sigma x)/\sigma^\alpha$. Then $\{w_\sigma\}_{0<\sigma<1}$  is
a locally bounded family, and $w_\sigma(x)\asymp |x|^\alpha$ in $B_{c/\sigma}\sm \{0\}$ for some $c>0$. Consequently,  there is a subsequence $\sigma_n\to 0$ such that $\{w_{\sigma_n}\}$ converges locally uniformly in $\mathbb{R}^d\sm \{0\}$ to
$$
W(x) := \lim_{n \to \infty} w_{\sigma_n}(x)
$$
which is a positive solution of the equation
$$
-\diver(A[u]) = 0 \qquad \mbox{ in } \mathbb{R}^d\sm \{0\}.
$$
Clearly, $m |x|^\alpha \leq W(x) \leq M |x|^\alpha$, where
$$
m:=\lim_{r\to 0}m_r=\lim_{r\to 0} \inf_{x\in S_r} \frac{v(x)}{r^\alpha}, \qquad M:=\lim_{r\to 0}M_r=\lim_{r\to 0}\sup_{x\in S_r} \frac{v(x)}{r^\alpha},
$$
and the existence of the limits above follows from Lemma \ref{lem:mon}.

We claim that for any $R > 0$ we have
$$
\inf_{x \in S_R} \frac{W(x)}{|x|^\alpha} = m ,\qquad
\sup_{x \in S_R} \frac{W(x)}{|x|^\alpha} = M.
$$
Indeed,
\begin{multline*}
\inf_{x \in S_R} \frac{W(x)}{|x|^\alpha}
=\inf_{x \in S_R} \lim_{n \to \infty} \frac{w_{\sigma_n}(x)}{|x|^\alpha} =
 \lim_{n \to \infty} \inf_{x \in S_R}  \frac{w_{\sigma_n}(x)}{|x|^\alpha}\\
=
 \lim_{n \to \infty}
\inf_{x \in S_R} \frac{v(\sigma_n x)}{(\sigma_n R)^\alpha} = \lim_{n \to \infty}
\inf_{x \in S_{\sigma_n R}} \frac{v(x)}{|x|^\alpha} = \lim_{n \to \infty}m_{\sigma_n R} =m,
    \end{multline*}
where the interchanging of the order of the two limiting processes above is justified due to the local uniform convergence of the sequence $\{w_{\sigma_n}(x)/|x|^\alpha\}$.
Similarly, we obtain $\sup_{x \in S_R} \frac{W(x)}{|x|^\alpha} =
M$.

Since $|x|^\alpha$ is a positive solution of the equation $-\diver(A[u]) = 0$ in  $\mathbb{R}^d\sm \{0\}$ which does not have any
critical point, it
follows from Theorem \ref{thm:SCP} that $ m|x|^\alpha = W(x) = M |x|^\alpha$,
and therefore $m=M$.
\end{proof}

\begin{center}{\bf Acknowledgments} \end{center}
 M.~F. and Y.~P. acknowledge the support of the Israel Science
Foundation (grant No. 587/07) founded by the Israel Academy of
Sciences and Humanities. M.~F. acknowledges also the support of the Israel Science
Foundation (grant No. 419/07) founded by the Israel Academy of
Sciences and Humanities. M.~F. was also partially supported by a fellowship
of the UNESCO fund.


\end{document}